\newif\ifdraft\draftfalse
\def\@begintheorem#1#2[#3]{%
    \def\naam{#1}
  \deferred@thm@head{\the\thm@headfont \thm@indent
    \@ifempty{#1}{\let\thmname\@gobble}{\let\thmname\@iden}%
    \@ifempty{#2}{\let\thmnumber\@gobble}{\let\thmnumber\@iden}%
    \@ifempty{#3}{\let\thmnote\@gobble}{\let\thmnote\@iden}%
    \thm@swap\swappedhead\thmhead{#1}{#2}{#3}%
    \the\thm@headpunct
    \thmheadnl 
    \hskip\thm@headsep
  }%
  \ignorespaces}
\newcommand{\kantlijndraft}[1]{\ifdraft\hspace{-\lastskip}%
\vadjust{\vspace{-1mm}\smash{\llap{{\tt #1}\hspace{8mm}}}\vspace{1mm}}\fi}
\def\voegToe#1#2#3{\immediate\write1{\string\newlabel{#1}{{#2}{#3}}}}
\newcommand{\thlabel}[1]{\voegToe{#1}{\naam\noexpand~\thetheorem}{\thepage}\kantlijndraft{#1}}
\renewcommand{\label}[1]{\voegToe{#1}{\@currentlabel}{\thepage}\kantlijndraft{#1}}
\newtheorem{theorem}{Theorem}[section]
\newtheorem{lemma}[theorem]{Lemma}
\newtheorem{corollary}[theorem]{Corollary}
\newtheorem{question}[theorem]{Question}
\newtheorem{proposition}[theorem]{Proposition}
\theoremstyle{definition}
\newtheorem{example}[theorem]{Example}
\newtheorem{definition}[theorem]{Definition}
\theoremstyle{remark}
\newtheorem*{remark}{Remark}
\newtheorem*{notation}{Notation}
\numberwithin{equation}{section}
\newtheorem{claim2}{\sc Claim}
\newcommand{\sse}{\subseteq}						
\newcommand{\minus}{\backslash}						
\newcommand{\Un}{\bigcup}							
\newcommand{\un}{\cup}								
\newcommand{\Meet}{\bigcap}							
\newcommand{\meet}{\cap}							
\newcommand{\es}{\varnothing}						
\newcommand{\scr}[1]{\ensuremath{\mathcal{#1}}}
\newcommand{\al}{\alpha}
\newcommand{\ka}{\kappa}
\newcommand{\pp}{\prime\prime}
\def\cprime{$'$}
\def\sapirovskii{{\v{S}}apirovski{\u\i}}
\def\arhangelskii{Arhangel{\cprime}ski{\u\i}}
\def\katetov{Kat\v{e}tov}
\def\juhasz{Juh{\'a}sz}
\begin{document}

\title{On the cardinality of Hausdorff spaces and H-closed spaces}

\author{N.A. Carlson}\address{Department of Mathematics, California Lutheran University, 60 W. Olsen Rd, MC 3750, Thousand Oaks, CA 91360 USA}
\email{ncarlson@callutheran.edu}

\author{J.R. Porter}
\address{Department of Mathematics, University of Kansas, 405 Snow Hall, 1460 Jayhawk Blvd, Lawrence, KS 66045-7523, USA}
\email{porter@ku.edu}

\subjclass[2010]{54D20, 54A25, 54D10.}

\keywords{cardinality bounds, cardinal invariants}

\begin{abstract} 
We introduce the cardinal invariant $aL^\prime(X)$ and show that $|X|\leq 2^{aL^\prime(X)\chi(X)}$ for any Hausdorff space $X$ (a corollary of Theorem~\ref{newbound}). This invariant has the properties a) $aL^\prime(X)=\aleph_0$ if $X$ is H-closed, and b) $aL(X)\leq aL^\prime(X)\leq aL_c(X)$. Theorem~\ref{newbound} then gives a new improvement of the well-known Hausdorff bound $2^{L(X)\chi(X)}$ from which it follows that $|X|\leq 2^{\psi_c(X)}$ if $X$ is H-closed (Dow/Porter~\cite{DowPorter82}). The invariant $aL^\prime(X)$ is constructed using convergent open ultrafilters and an operator $c:\scr{P}(X)\to\scr{P}(X)$ with the property $clA\sse c(A)\sse cl_\theta(A)$ for all $A\sse X$. As a comparison with this open ultrafilter approach, in $\S 3$ we additionally give a $\kappa$-filter variation of Hodel's proof~\cite{Hodel2006} of the Dow-Porter result. Finally, for an infinite cardinal $\kappa$, in $\S 5$ we introduce $\kappa$wH-closed spaces, $\kappa H^\prime$-closed spaces, and $\kappa H^{\prime\prime}$-closed spaces. The first two notions generalize the H-closed property. Key results in this connection are that a) if $\kappa$ is an infinite cardinal and $X$ a $\kappa$wH-closed space with a dense set of isolated points such that $\chi(X)\leq\kappa$, then $|X|\leq 2^{\kappa}$, and b) if $X$ is $\kappa H^\prime$-closed or $\kappa H^{\prime\prime}$-closed then $aL^\prime(X)\leq\kappa$. This latter result relates these notions to the invariant $aL^\prime(X)$ and the operator $c$.
\end{abstract}

\maketitle

\section{Introduction.}
A space $X$ is \emph{H-closed} if every open cover $\scr{V}$ of $X$ has a finite subfamily $\scr{W}$ such that $X=\Un_{W\in\scr{W}}clW$. In 1982 Dow and Porter \cite{DowPorter82} used H-closed extensions of discrete spaces to demonstrate that $|X|\leq 2^{\chi(X)}$ (in fact, $|X|\leq 2^{\psi_c(X)}$) for any H-closed space $X$. The technique was simplified in Porter~\cite{por93}, and in 2006 Hodel~\cite{Hodel2006} gave a proof of the Dow-Porter result using $\kappa$-nets. 

A natural general question is the following:

\begin{question} Does there exists a strengthening of \arhangelskii's cardinal inequality $|X|\leq 2^{L(X)\chi(X)}$ \cite{arh1969} for a general Hausdorff space $X$ for which it follows as a corollary that $|X|\leq 2^{\chi(X)}$ if $X$ is H-closed?
\end{question}

This question was asked by Angelo Bella in personal communication with the second author. Another way to ask this is, does there exists a property $\scr{P}$ of a Hausdorff space $X$ that a) generalizes both the H-closed property and the Lindel\"of property simultaneously, and b) $|X|\leq 2^{\chi(X)}$ for spaces $X$ with property $\scr{P}$?

As both H-closed spaces and Lindel\"of spaces are \emph{almost-Lindel\"of} (that is, every open cover has a countable subfamily whose closures cover), the property ``almost-Lindel\"of'' would seem to be a suitable candidate. However, in 1998 Bella and Yaschenko \cite{bey1998} showed that if $\kappa$ is a non-measurable cardinal then there exists an almost-Lindel\"of, first-countable Hausdorff space $X$ such that $|X|>\kappa$. Thus, $|X|\leq 2^{\chi(X)}$ does not hold for all almost-Lindel\"of Hausdorff spaces $X$. (It does, however, hold if the space is additionally Urysohn \cite{BellaCammaroto1988}). In 1988, Bella and Cammaroto \cite{BellaCammaroto1988} gave the bound $2^{aL_c(X)t(X)\psi_c(X)}$ for the cardinality of a Hausdorff space $X$, where $aL_c(X)$ is defined before Definition~\ref{alprime} below. As $aL_c(X)\leq L(X)$, this suggests that ``$aL_c(X)=\aleph_0$'' might be the required property $\scr{P}$. Yet the \katetov~H-closed extension $\kappa\omega$ of the discrete space $\omega$ is an example of an H-closed space for which $aL_c(X)=\mathfrak{c}>\aleph_0$, demonstrating that the property $aL_c(X)=\aleph_0$ does not hold for all H-closed spaces.

In this study we construct a cardinal invariant $aL^\prime(X)$ such that a) $|X|\leq 2^{aL^\prime(X)\chi(X)}$ for a Hausdorff space $X$ (Theorem~\ref{newbound} gives a slightly stronger version of this statement), b) $aL(X)\leq aL^\prime(X)\leq aL_c(X)$ (Proposition~\ref{anotherProp}), and c) $aL^\prime(X)=\aleph_0$ if $X$ is H-closed (follows from Corollary~\ref{hclosed}). Thus, the property ``$aL^\prime(X)=\aleph_0$'' is the required property $\scr{P}$ above. Theorem~\ref{newbound} then gives a new bound on the cardinality of a Hausdorff space that is strong enough to capture the H-closed bound $2^{\chi(X)}$ given by Dow and Porter.

For an open set $U$ in a space $X$, convergent open ultrafilters are used to define a set $\widehat{U}$  (Definition~\ref{hatdef}) such that $U\sse\widehat{U}\sse clU$. Using the set $\widehat{U}$, we then define an operator $c:\scr{P}(X)\to\scr{P}(X)$ that satisfies $clA\sse c(A)\sse cl_\theta(A)$ for all $A\sse X$. Both $\widehat{U}$ and the function $c$ have relationships to the Iliadis absolute $EX$ that are outlined in $\S 2$. After this set-up, the invariant $aL^\prime(X)$ is defined as in Definition~\ref{alprime}.

In Theorem~\ref{HclosedChar}, we give the following characterization of H-closed spaces, which is of interest in its own right: a space $X$ is H-closed if and only if for every open cover $\scr{V}$ of X there exists $\scr{W}\in[\scr{V}]^{<\omega}$ such that $X=\Un_{W\in\scr{W}}\widehat{W}$. Given that $\widehat{W}\sse clW$ for every open set $W$, this characterization is then a logically stronger property that the usual definition of the H-closed property. It follows naturally to define a cardinal invariant $L^\prime(X)$ as the least infinite cardinal $\kappa$ such that if $\scr{V}$ is a cover of $X$ then there exists $\scr{W}\in[\scr{V}]^{\leq\kappa}$ such that $X=\Un_{W\in\scr{W}}\widehat{W}$ (Definition~\ref{lprime}). Theorem~\ref{HclosedChar} shows that $L^\prime(X)=\aleph_0$ if $X$ is H-closed. We demonstrate that $L^\prime(X)$ is hereditary on $c$-closed subsets (Proposition~\ref{Lhered}), from which it follows that $aL^\prime(X)\leq L^\prime(X)\leq L(X)$. Thus, given our main cardinality bound for general Hausdorff spaces (Theorem~\ref{newbound}), we see that the following is a sufficient property of both Lindel\"of and H-closed spaces $X$ from which it follows that $|X|\leq 2^{\chi(X)}$: every open cover $\scr{V}$ has a countable subfamily $\scr{W}$ such that $X=\Un_{W\in\scr{W}}\widehat{W}$ (that is, $L^\prime(X)=\aleph_0)$. As $aL^\prime(X)\leq L^\prime(X)$, another such property (albeit weaker) is ``$aL^\prime(X)=\aleph_0$'', as mentioned above.

In~\cite{Hodel2006}, Hodel gave a proof that $|X|\leq 2^{\psi_c(X)}$ for H-closed spaces $X$ using the notion of a $\kappa$-net for a cardinal $\kappa$. This proof is different than previous proofs of this result given by Dow and Porter, and also different than the approach taken in Theorem~\ref{newbound} in this study. In $\S3$ we use a filter characterization of H-closed spaces given in Theorem~\ref{H-cldChar} and the $c$-adherence of a filter to give another proof that the cardinality of an H-closed space $X$ is bounded by $2^{\psi_c(X)}$. This particular method can be seen as a variation of the method used by Hodel ~\cite{Hodel2006} for nets. We present two examples at the end of $\S3$.

In $\S 4$ we give the proof of our main result, Theorem~\ref{newbound}, after establishing preliminary results in $\S2-\S4$. The proof is fundamentally a standard closing-off argument. We use a theorem of Hodel (re-stated in Theorem~\ref{hodel}) that gives a set-theoretic generalization of many such arguments. Typically the closure operator is used in a closing-off argument, or occassionally the $\theta$-closure operator. We use the operator $c$ referred to above.

In $\S5$ we introduce two notions that generalize the H-closed property and a related third notion. The first is, for an infinite cardinal $\kappa$, the concept of a $\kappa$wH-closed space (Definition~\ref{kwHclosed}). This notion grows naturally out of recent work of Osipov in~\cite{O}. In Proposition~\ref{propA} we give this characterization of H-closed: $X$ is H-closed  if an only if $X$ is $\aleph_0$wH-closed. A key result is Theorem~\ref{kwHclosedDense}, which states that if $\kappa$ is an infinite cardinal and $X$ is a $\kappa$wH-closed space with a dense set of isolated points such that $\chi(X)\leq\kappa$, then $|X|\leq 2^{\kappa}$. The second notion introduced in $\S5$ is that of a $\kappa H^\prime$-closed space (Definition~\ref{defII}). Proposition~\ref{HclII} demonstrates that $X$ is H-closed if and only if $X$ is $\aleph_0H^\prime$-closed. After defining $z(X)=inf\{\kappa\geq\aleph_0:X\textup{ is }\kappa H^\prime-\textup{closed}\}$, it is shown in Corollary~\ref{corII}(a) that $aL^\prime(X)^+\leq z(X)$, thereby relating the notion of $\kappa H^\prime$-closed to concepts defined in previous sections. It follows immediately $|X|\leq 2^{z(X)\chi(X)}$ for any Hausdorff space $X$ after applying Theorem~\ref{newbound}. Finally, we introduce the property of $\kappa H^{\pp}$-closed in Definition~\ref{kH''closed} and, for a space $X$, we define the cardinal invariant $z^\prime(X) = inf\{\kappa \geq \aleph_0: X \text{ is } \kappa\text{H}''\text{--closed}\}$. While it can be shown that $aL^\prime(X)\leq z^\prime(X)$ and thus $|X|\leq 2^{z^\prime(X)\chi(X)}$ for any Hausdorff space $X$ (Corollary~\ref{corIII}), it is not guaranteed that a $\aleph_0H^{\prime\prime}$-closed space is H-closed. In fact, any countable space is $\aleph_0H^{\prime\prime}$-closed.

\emph{All spaces are assumed to be Hausdorff}. For all undefined notions see Engelking~\cite{Engelking},  \juhasz~\cite{Juh}, or Porter-Woods~\cite{PW}. Hodel's survey paper~\cite{Hodel2006} also contains thorough discussion of many cardinal invariants and cardinality bounds related to those discussed in this study. 

\section{Construction of the cardinal function $aL^\prime(X)$.}
Given a Hausdorff space $X$ and an open set $U$ of $X$, define 
$$0U=\{\scr{U}:\scr{U}\textup{ is a convergent open ultrafilter containing }U\}.$$
We recall the construction of the Iliadis absolute $EX$ as the set of convergent open ultrafilters on $X$ with the topology generated by the basis $\{0U: U\textup{ is open in }X\}$. (See \cite{PW}, Chapter 6, for example). Under this topology $EX$ is an extremally disconnected, zero-dimensional, Tychonoff space. For each $\scr{U}\in EX$, let $k(\scr{U})$ be the unique convergent point of $\scr{U}$.  We have the following basic facts concerning $EX$ and the map $k:EX\to X$ (see \cite{PW} 6.6(e)(5), 6.8(d,f) and \cite{PV} 1.2(b)). Recall a subset $A$ of a space $X$ is an \emph{H-set} if for every cover $\scr{V}$ of $A$ by sets open in $X$ there exists $\scr{W}\in[\scr{V}]^{<\omega}$ such that $A\sse\Un_{W\in\scr{W}}clW$ and a space $X$ is Kat\v etov if $X$ has a coarser H-closed topology.

\begin{proposition}\label{ex}
For a open sets $U,V\sse X$ of a space $X$, and $\scr{U}\in EX$,
\begin{itemize}
\item[(a)] The map $k:EX\to X$ is a $\theta$-continuous, perfect, irreducible, surjection,
\item[(b)] $U \in \scr U$ iff $int(cl(U)) \in \scr U$ iff $\scr U \in 0U$, and thus $0U=0(int(clU))$,
\item[(c)] $k[0U] = cl(U)$,
\item[(d)] $k^{\leftarrow}(k(\scr U)) \subseteq 0U$ iff $k(\scr U) \in int(cl(U))$, and
\item[(e)] If $B \subseteq EX$ is compact, then $k[B]$ is Kat\v etov and  an H-set.
\item[(f)] $0(U\meet V)=0U\meet 0V$ and $0(U\un V)=0U\un0V$.
\end{itemize}
\end{proposition}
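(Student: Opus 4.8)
The plan is to prove Proposition~\ref{ex}, and I need to establish each of the six facts (a)--(f) about the Iliadis absolute $EX$ and the map $k:EX\to X$. Since the excerpt says these are "basic facts" with references to \cite{PW} and \cite{PV}, I should focus on the items most directly tied to the construction, especially (b), (d), and (f), which are the combinatorial heart; (a), (c), and (e) are more structural and may largely be cited.

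Let me sketch my approach for each part.

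For (b): the key is that an open ultrafilter $\scr{U}$ cannot distinguish an open set $U$ from the regular open set $\operatorname{int}(\operatorname{cl}(U))$. I would argue $U \in \scr{U} \Rightarrow \operatorname{int}(\operatorname{cl}(U)) \in \scr{U}$ since $U \sse \operatorname{int}(\operatorname{cl}(U))$ and ultrafilters are upward closed. Conversely, if $\operatorname{int}(\operatorname{cl}(U)) \in \scr{U}$ but $U \notin \scr{U}$, then by the ultrafilter property some open $V \in \scr{U}$ has $V \meet U = \es$; but then $V$ misses $\operatorname{cl}(U)$ on its interior, contradicting $V \meet \operatorname{int}(\operatorname{cl}(U)) \neq \es$.

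For (f): the equation $0(U\meet V)=0U\meet 0V$ follows from the finite-intersection/filter property: $U\meet V \in \scr{U}$ iff both $U,V \in \scr{U}$. For $0(U\un V)=0U\un 0V$, the ultrafilter primeness is essential: $U \un V \in \scr{U}$ forces $U \in \scr{U}$ or $V \in \scr{U}$. The nontrivial direction requires knowing that for an \emph{open} ultrafilter, $U\un V \in \scr{U}$ implies one of the disjuncts, which I would justify from maximality of $\scr{U}$ among open filters.

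The main obstacle I expect is part (d), which ties the fiber $k^{\leftarrow}(k(\scr{U}))$ to membership in a regular open set. The forward direction uses that if the whole fiber over $p=k(\scr{U})$ lies in $0U$, then every open ultrafilter converging to $p$ contains $U$; I would show this forces $p \in \operatorname{int}(\operatorname{cl}(U))$ by contradiction, constructing (using Hausdorffness and the existence of sufficiently many convergent open ultrafilters) a convergent open ultrafilter at $p$ avoiding $U$. Conversely, if $p \in \operatorname{int}(\operatorname{cl}(U))$, then $\operatorname{int}(\operatorname{cl}(U))$ is an open neighborhood of $p$, and any open ultrafilter converging to $p$ must contain every neighborhood of $p$, hence contains $\operatorname{int}(\operatorname{cl}(U))$ and so lies in $0U$ by part (b). For (a), (c), and (e), I would cite the standard references (\cite{PW} 6.6(e)(5), 6.8(d,f) and \cite{PV} 1.2(b)) since these are established structural properties of the absolute, verifying only that the topology generated by $\{0U\}$ makes $k$ $\theta$-continuous and that perfectness/irreducibility transfer, with (e) following from the fact that continuous images of compact sets are H-sets under a $\theta$-continuous perfect map and that $k$ restricted to such sets lands in a \katetov\ space.
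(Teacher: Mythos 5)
The paper offers no proof of Proposition~\ref{ex} at all: it is stated as a list of ``basic facts'' and dispatched entirely by citation to \cite{PW} 6.6(e)(5), 6.8(d,f) and \cite{PV} 1.2(b). So there is no argument in the paper to compare yours against; what you have done is supply proofs where the authors chose to cite, which is a legitimate and arguably more self-contained route. On the merits: your arguments for (b) and (f) are correct and are the standard ones --- upward closure of open filters gives $U\in\scr{U}\Rightarrow int(cl(U))\in\scr{U}$, maximality gives the converse and the primeness needed for $0(U\un V)=0U\un 0V$, and the final biconditional in (b) is just the definition of $0U$. The only place your sketch needs tightening is the forward direction of (d). Hausdorffness is not the relevant ingredient there; what you need is the observation that if $p=k(\scr{U})\notin int(cl(U))$ then $p\in cl(X\minus cl(U))$, so the traces $N\meet(X\minus cl(U))$ of open neighborhoods $N$ of $p$ are all nonempty and generate an open filter that extends to an open ultrafilter $\scr{V}$; this $\scr{V}$ converges to $p$ (it contains every open neighborhood of $p$) and contains $X\minus cl(U)$, hence omits $U$, so $\scr{V}\in k^{\leftarrow}(p)\minus 0U$. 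Without making that step explicit, ``constructing a convergent open ultrafilter at $p$ avoiding $U$'' is an assertion rather than a proof. For (a), (c), (e) your deferral to the cited references matches exactly what the paper does, though for (e) the precise statement you want is that $\theta$-continuous images of compact sets are H-sets together with \cite{PV} 1.2(b) for the Kat\v etov claim, rather than anything about perfectness.
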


\noindent Let $b: X \rightarrow EX$ be an injective function such that $k \circ b = id_X$. That is, for all $x\in X$, $b(x)$ is an open ultrafilter converging to $x$. Denote the subspace $b[X]$ of $EX$ by $X_b$.  The space $X_b$ is a \emph{section} of $EX$ \cite{pw78} and is an extremally disconnected, Tychonoff space.  We observe that $k\left|_{X_b}\right.:X_b\to X$ is a bijection as $X$ is Hausdorff.

\begin{definition}\label{hatdef}
For a space $X$, an open set $U$, and a section $X_b$ of $EX$, define 
$$\widehat{U}_b = \{x \in X: U \in b(x)\}.$$
\end{definition}
We give several properties of $\widehat{U}_b$ in Proposition~\ref{hatu}. As is indicated in Proposition~\ref{hatu}(a), $\widehat{U}_b$ consists of a special set of closure points $x$ of $U$ having the stronger property that $U$ is a member of the open ultrafilter $b(x)$. The set $\widehat{U}_b$ will play a major role in the construction of the cardinal invariant $aL^\prime(X)$ and in the proof of our main theorem, Theorem~\ref{newbound}. In addition, for a space $(X,\tau)$, $\{\widehat{U}_b: U \in \tau(X)\}$ forms a basis for a topology $\sigma_b$ on $X$ such that $(X,\sigma_b)$ is homeomorphic to the section $X_b$ (Proposition~\ref{sectionTop}). Furthermore, Proposition~\ref{HclosedChar} gives a characterization of H-closed spaces using sets of the form $\widehat{U}_b$. It is this characterization that will give the cardinality bound $2^{\psi_c(X)}$ for H-closed spaces as an immediate consequence of the general Hausdorff bound given in Theorem~\ref{newbound}.

\begin{proposition}\label{hatu} 
Let $(X,\tau)$ be a space, $U,V$ open sets, and $X_b$ be any section of $EX$. Then,
\begin{itemize}
\item[(a)] $U\sse\widehat{U}_b\sse clU$,
\item[(b)] $\widehat{U}_b=k[0U \cap X_b]$,
\item[(c)] $\left(\widehat{U \cap V}\right)_b = \widehat{U}_b  \cap \widehat{V}_b$\space and $\left(\widehat{U \cup V}\right)_b = \widehat{U}_b  \cup \widehat{V}_b$\space, 
\item[(d)] $X\backslash \widehat{U} _b=(\widehat{X\backslash cl_XU})_b$\space.
\end{itemize}
\end{proposition}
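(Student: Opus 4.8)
The plan is to establish part (b) first and then harvest (a), (c), and (d) from it together with Proposition~\ref{ex}. For (b) I would simply chase the definitions. If $x\in\widehat{U}_b$ then $U\in b(x)$, so $b(x)$ is a convergent open ultrafilter containing $U$, i.e. $b(x)\in 0U$; since also $b(x)\in X_b$ and $k(b(x))=x$, we get $x\in k[0U\meet X_b]$. Conversely, any element of $k[0U\meet X_b]$ has the form $k(\scr{U})$ with $\scr{U}=b(z)\in 0U$ for some $z$, and $k\circ b=id_X$ forces $k(\scr{U})=z$, while $U\in b(z)$ gives $z\in\widehat{U}_b$. The only ingredients are $k\circ b=id_X$ and the definition of $0U$.

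Part (a) then splits into two inclusions. The inclusion $U\sse\widehat{U}_b$ is immediate from the fact that $b(x)$ is an open ultrafilter \emph{converging} to $x$: if $x\in U$ then $U$ is an open neighborhood of $x$, hence $U\in b(x)$, so $x\in\widehat{U}_b$. The inclusion $\widehat{U}_b\sse clU$ follows at once from (b) and Proposition~\ref{ex}(c), since $\widehat{U}_b=k[0U\meet X_b]\sse k[0U]=clU$. For (c) I would combine (b) with Proposition~\ref{ex}(f). Writing each $\widehat{\,\cdot\,}_b$ via (b) and substituting $0(U\meet V)=0U\meet 0V$ and $0(U\un V)=0U\un 0V$, the union case is immediate because taking images commutes with unions. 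The intersection case additionally needs that $k$ restricted to $X_b$ is injective, so that $k$ preserves the intersection of the two subsets $0U\meet X_b$ and $0V\meet X_b$ of $X_b$; this is exactly where the bijectivity of $k\!\left|_{X_b}\right.$ (noted after Proposition~\ref{ex}) is used.

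Part (d) is the substantive one. Using (b) and the bijectivity of $k\!\left|_{X_b}\right.$, I would write $X\minus\widehat{U}_b=k[(EX\minus 0U)\meet X_b]$, so that, again via (b), the whole statement reduces to the identity $EX\minus 0U=0(X\minus cl_XU)$. Disjointness of $0U$ and $0(X\minus cl_XU)$ is clear from Proposition~\ref{ex}(f): since $U\meet(X\minus cl_XU)=\es$ and no proper filter contains $\es$, we have $0U\meet 0(X\minus cl_XU)=0\es=\es$.

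The main obstacle is the covering direction: every convergent open ultrafilter $\scr{U}$ must contain $U$ or $X\minus cl_XU$. This is precisely the maximality (primeness) of open ultrafilters. Concretely, using the characterization that $U\in\scr{U}$ iff $U$ meets every member of $\scr{U}$, one argues that if $U\notin\scr{U}$ then some $V\in\scr{U}$ satisfies $U\meet V=\es$, whence $V\sse X\minus cl_XU$ and upward closure gives $X\minus cl_XU\in\scr{U}$. Equivalently one may simply cite that the basic sets $0U$ are clopen in $EX$ with complement $0(X\minus cl_XU)$. Granting this identity, part (d) follows directly from (b), and I expect this primeness step to be the only place requiring genuine input beyond routine filter and image manipulations.
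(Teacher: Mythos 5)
Your proposal is correct and follows essentially the same route as the paper: part (b) is a definition chase using $k\circ b=id_X$, parts (a) and (c) come from the filter properties of $b(x)$ together with Proposition~\ref{ex}, and part (d) rests on the same key fact that an open ultrafilter contains either $U$ or $X\minus cl_XU$ (the paper applies this pointwise to $b(x)$, while you package it as the clopen-complement identity $EX\minus 0U=0(X\minus cl_XU)$, a purely cosmetic difference). No gaps.
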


\begin{proof}
(a) If $x\in U$, then $U$ is a member of any open ultrafilter converging to $x$. Thus, $U\in b(x)$ and $x\in\widehat{U}_b$. This shows $U\sse\widehat{U}_b$. Suppose $x\in\widehat{U}_b$. Then $U\in b(x)$. If $x\in X\minus clU$, then $X\minus clU\in b(x)$ and thus $\es=U\meet (X\minus clU)\in b(x)$, a contradiction. Thus $x\in clU$ and $\widehat{U}_b\sse clU$.

(b) If $x\in\widehat{U}_b$, then $b(x)\in X_b$, $U\in b(x)$, and $b(x)\in 0U$. Since $k\circ b=id_X$, we see that $x=k(b(x))$, thus $x\in k[0U \cap X_b]$. This shows $\widehat{U}_b\sse k[0U \cap X_b]$. The reverse containment is similar.

(c) follows from (b) above and Proposition~\ref{ex}(f).

(d) If $x\in X\backslash \widehat{U} _b$, then $U\notin b(x)$ and therefore $X\minus clU\in b(x)$. Thus $x\in(\widehat{X\backslash cl_XU})_b$ and $X\backslash \widehat{U} _b\sse (\widehat{X\backslash cl_XU})_b$. The reverse containment is identical.
\end{proof}
Recall that the semiregularization $X(s)$ of a space $X$ is the (Hausdorff) space with underlying set $X$ and topology generated by the basis of regular-open sets in $X$. 
\begin{corollary}
Let $X$ be a space, $U\in\tau(X)$, and $b:X\to EX$ be a section. Then $X_b=X(s)_b$.
\end{corollary}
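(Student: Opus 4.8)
The plan is to reduce the statement to the standard fact that the Iliadis absolute depends only on the regular-open algebra, together with the observation that semiregularization does not change that algebra. Recall first that $\ro{X}=\ro{X(s)}$ (the regular-open sets of $X$ form a basis for $X(s)$ and remain regular-open there). By Proposition~\ref{ex}(b), membership of an open set in an open ultrafilter is controlled entirely by its regularization: $U\in\scr{U}$ iff $int(cl(U))\in\scr{U}$. Hence an open ultrafilter is determined by its regular-open members, and the open ultrafilters on $X$ are in canonical bijection with those on $X(s)$, sending $\scr{U}$ to its trace $\scr{U}\meet\tau(X(s))$ and recovering $\scr{U}$ as $\{U\in\tau(X):int(cl(U))\in\scr{U}\meet\tau(X(s))\}$.

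Next I would verify that this bijection preserves convergence and limit points. If $\scr{U}$ converges to $x$ in $X$, it contains every $X$-open neighbourhood of $x$, hence every $X(s)$-open neighbourhood, so its trace converges to $x$ in $X(s)$. Conversely, if the trace converges to $x$ in $X(s)$, then for any $X$-open $V\ni x$ the regular-open set $int(cl(V))$ is an $X(s)$-neighbourhood of $x$ and so belongs to $\scr{U}$; Proposition~\ref{ex}(b) then gives $V\in\scr{U}$, so $\scr{U}$ converges to $x$ in $X$. Since Hausdorff limits are unique, the bijection carries the limit map $k$ of $EX$ onto the limit map $k_s$ of $E(X(s))$.

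I would then identify the two topologies. The basis $\{0U:U\in\tau(X)\}$ of $EX$ equals $\{0W:W\in\ro{X}\}$ by Proposition~\ref{ex}(b), and the corresponding basis of $E(X(s))$ is indexed by $\ro{X(s)}$. As $\ro{X}=\ro{X(s)}$, these two bases are carried onto one another by the bijection above, so $EX=E(X(s))$ as topological spaces, with common limit map $k=k_s$.

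The conclusion is then immediate. Since $k=k_s$, the defining identity $k\circ b=id_X$ coincides with $k_s\circ b=id_{X(s)}$, so the section $b$ of $EX$ is at the same time a section of $E(X(s))$; thus $X_b=b[X]=X(s)_b$, carrying one and the same subspace topology. The only real work lies in the first three steps, namely in checking that $EX$ and $E(X(s))$ genuinely coincide as spaces with the same limit map; once that identification is in place, there is nothing further to prove.
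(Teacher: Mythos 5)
Your proof is correct and follows essentially the same route as the paper's: both arguments rest on the identification $EX=E(X(s))$ (with the same limit map, so that $b$ is simultaneously a section of both), which ultimately comes from Proposition~\ref{ex}(b) and the fact that $X$ and $X(s)$ have the same regular-open sets. The only difference is that the paper cites $EX=E(X(s))$ as known and records the identity $\widehat{U}_b=(\widehat{int_Xcl_XU})_b$, whereas you supply the verification of that identification in detail.
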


\begin{proof}
Note that $b:X(s)\to EX(s)$ is also a section and $EX=E(X(s))$. By~\ref{hatu}(b), $\widehat{U}_b=k[0U\meet X_b]=k[0(int_Xcl_XU\meet X_b]=(\widehat{int_Xcl_XU})_b$. Thus, $X_b=X(s)_b$.
\end{proof}

\begin{proposition}\label{sectionTop}
Let $(X,\tau)$ be a space and $X_b$ a section of $EX$. Then $\{\widehat{U}_b: U \in \tau(X)\}$ is a clopen base for an extremally disconnected, Tychonoff topology $\sigma_b$ on $X$ such that $(X,\sigma_b)$ is homeomorphic to $X_b$.
\end{proposition}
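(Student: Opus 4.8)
The plan is to realize the bijection $b\colon X\to X_b$ as the desired homeomorphism and then transport the topological properties of $X_b$ back to $(X,\sigma_b)$. Since $k\circ b=\mathrm{id}_X$ and $k\left|_{X_b}\right.$ is a bijection, $b$ is a bijection of $X$ onto $X_b$ whose inverse is $k\left|_{X_b}\right.$. The governing computation is that $b$ carries each proposed basic set to a basic set of the subspace $X_b$: for $U\in\tau(X)$ one checks directly from the definition $\widehat{U}_b=\{x:U\in b(x)\}$ that $b[\widehat{U}_b]=0U\cap X_b$, because $x\in\widehat{U}_b$ holds exactly when $b(x)\in 0U$, and every element of $X_b$ has the form $b(x)$. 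Everything else follows from pushing structure across this correspondence.

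First I would verify that $\{\widehat{U}_b:U\in\tau(X)\}$ is a base for a topology on $X$. It covers $X$ since $X\in b(x)$ for every $x$, whence $\widehat{X}_b=X$. The intersection condition is immediate from Proposition~\ref{hatu}(c): the intersection $\widehat{U}_b\cap\widehat{V}_b$ equals $(\widehat{U\cap V})_b$, again a member of the family, so the family is closed under finite intersection and the base axioms hold. Call the resulting topology $\sigma_b$.

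Next I would show the base is clopen and that $b$ is a homeomorphism. Each $\widehat{U}_b$ is open by definition; it is also closed, because Proposition~\ref{hatu}(d) gives $X\setminus\widehat{U}_b=(\widehat{X\setminus cl_XU})_b$, a basic open set, so the complement is open and $\widehat{U}_b$ is clopen. For the homeomorphism, recall that the subspace topology on $X_b$ inherited from $EX$ has $\{0U\cap X_b:U\in\tau(X)\}$ as a base. Since $b$ is a bijection sending the base $\{\widehat{U}_b\}$ of $\sigma_b$ onto the base $\{0U\cap X_b\}$ of $X_b$ (by the computation above), $b\colon(X,\sigma_b)\to X_b$ is a homeomorphism.

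Finally, the remaining properties come for free. The section $X_b$ is already known to be an extremally disconnected Tychonoff space, and these are topological properties preserved under homeomorphism, so $(X,\sigma_b)$ is extremally disconnected and Tychonoff as well. I do not expect any genuine obstacle here: the only point requiring care is the base-to-base identity $b[\widehat{U}_b]=0U\cap X_b$, on which all four assertions hinge, together with the decision to import extremal disconnectedness from $X_b$ rather than attempting to prove it intrinsically (which would be awkward, since extremal disconnectedness is not hereditary in general).
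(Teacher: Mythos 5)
Your proposal is correct and follows essentially the same route as the paper: the paper's one-line proof invokes Proposition~\ref{hatu}(b) (i.e.\ $\widehat{U}_b=k[0U\cap X_b]$, which is your identity $b[\widehat{U}_b]=0U\cap X_b$ read through the inverse bijection) together with the bijectivity of $k|_{X_b}$, and transports the known extremally disconnected Tychonoff structure of the section $X_b$ back to $(X,\sigma_b)$. You have simply written out the base axioms, the clopenness via \ref{hatu}(c),(d), and the base-to-base correspondence explicitly.
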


\begin{proof}

The proof follows from ~\ref{hatu}(b) and the fact that $k|_{X_b}:X_b \rightarrow X$ is a bijection.
\end{proof}

For a space $X$ and a section $X_b$ of $EX$, define an operator $c_b:\scr{P}(X)\to\scr{P}(X)$ by
$$c_b(A)=\{x\in X: \widehat{U}_b\meet A\neq\es\textup{ for all }U\textup{ such that }x\in U\in\tau(X)\}.$$
We say that $A\sse X$ is $c_b$-\emph{closed} if $c_b(A)=A$.

Recall that for $A\sse X$, we define $aL(A,X)$ as the least infinite cardinal $\kappa$ such that if $\scr{V}$ is a cover of $A$ by sets open in $X$ then there exists $\scr{W}\in[\scr{V}]^{\leq\kappa}$ such that $A\sse\Un_{W\in\scr{W}}clW$. The \emph{almost Lindel\"of degree of $X$}, denoted by $aL(X)$, is $aL(X,X)$. The \emph{almost Lindel\"of degree of $X$ with respect to closed sets} is
$$aL_c(X)=\sup\{aL(C,X): C\textup{ is closed}\}+\aleph_0.$$
It is straightforward to see that $aL(X)\leq aL_c(X)\leq L(X)$ and that all three are identical if $X$ is regular.

\begin{definition}\label{alprime}
For a section $X_b$ of $EX$, we define the cardinal invariant $aL^b(X)$, by $aL^b(X)=\sup\{aL(C,X): C\textup{ is }c_b\textup{-closed}\}+\aleph_0$. As $aL^b(X)$ depends on the choice of section $X_b$, we define the unique cardinal invariant $aL^\prime(X)$ by
$$aL^\prime(X)=\min\{aL^b(X):X_b\textup{ is a section of }EX\}.$$
Let $X_{b^\prime}$ be any section witnessing that $aL^\prime(X)=aL^{b^\prime}(X)$ and define the operator $c:\scr{P}(X)\to\scr{P}(X)$ by $c=c_{b^\prime}$. We then refer to $aL^\prime(X)$ as the \emph{almost Lindel\"of degree of $X$ with respect to $c$-closed sets}. 
\end{definition}

\begin{notation}
For an open set $U$ of $X$, we let $\widehat{U}$ denote $\widehat{U}_{b^\prime}$. For $A\sse X$, we let $A^\prime = b^\prime[A]\sse EX$. For a point $x$ in a space $X$, let $\scr{U}_x$ represent the open ultrafilter $b^\prime(x)$. In general, throughout this study we will reserve the symbol ``$\scr{U}$'' to represent a convergent open ultrafilter. 
\end{notation}

The function $c$ defined above is the main operator in the closing-off argument used to prove our main result, Theorem~\ref{newbound}. We give properties of $c$ below.  For a subset $A\sse X$, we also give a characterization of $c(A)$ using $EX$ in Theorem~\ref{EXchar}.

\begin{proposition}\label{properties}
Let $X$ be a space, and $A, B\sse X$.
\begin{itemize}
\item[(a)] $A\sse c(A)$.
\item[(b)] if $A\sse B$ then $c(A)\sse c(B)$.
\item[(c)] $clA\sse c(A)\sse cl_\theta(A)$.
\item[(d)] if $U$ is open, then $clU=c(U)\sse c(\widehat{U})$. 
\item[(e)] if $X$ is regular then $clA=c(A)=cl_\theta(A)$.
\item[(f)] If $A$ is $c$-closed then $A$ is closed.
\end{itemize}
\end{proposition}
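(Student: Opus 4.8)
The plan is to prove each of the six items in turn, deriving most of them from the definition of $c$ together with the properties of $\widehat{U}$ already established in Proposition~\ref{hatu}. Recall that $c(A)=\{x\in X: \widehat{U}\meet A\neq\es\textup{ for every open }U\ni x\}$. Items (a) and (b) are immediate from the definition: for (a), if $x\in A$ and $x\in U$ open, then $x\in U\sse\widehat{U}$ by~\ref{hatu}(a), so $x\in\widehat{U}\meet A$; for (b), every witness of membership in $c(A)$ serves as a witness for $c(B)$ once $A\sse B$.

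The heart of the matter is item (c), $clA\sse c(A)\sse cl_\theta(A)$, since (d), (e), and (f) will follow from it. For the left inclusion, suppose $x\in clA$ and $x\in U$ open; then $U\meet A\neq\es$, and since $U\sse\widehat{U}$ we get $\widehat{U}\meet A\neq\es$, so $x\in c(A)$. For the right inclusion, I would use the contrapositive: if $x\notin cl_\theta(A)$, there is an open $U\ni x$ with $clU\meet A=\es$; then since $\widehat{U}\sse clU$ by~\ref{hatu}(a), we have $\widehat{U}\meet A=\es$, witnessing $x\notin c(A)$. For (d), applying (c) to the open set $U$ gives $clU\sse c(U)\sse cl_\theta(U)$, and for open sets $cl_\theta(U)=clU$, forcing $c(U)=clU$; the inclusion $c(U)\sse c(\widehat{U})$ is then immediate from (b) together with $U\sse\widehat{U}$. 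Item (e) is the specialization of (c) to regular spaces, where $clA=cl_\theta(A)$ holds for all $A$, collapsing the chain of inclusions in (c) to equalities. Finally, (f) follows from (c): if $A$ is $c$-closed, then $clA\sse c(A)=A\sse clA$, so $A=clA$ is closed.

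I do not anticipate a genuine obstacle here, as every step reduces to the defining sandwich $U\sse\widehat{U}\sse clU$ from Proposition~\ref{hatu}(a) and elementary facts about the $\theta$-closure. The only point requiring modest care is the right inclusion in (c), where one must correctly translate the failure of membership in $cl_\theta(A)$ into an open neighbourhood whose closure misses $A$, and then invoke $\widehat{U}\sse clU$ rather than $U\sse\widehat{U}$; reaching for the wrong containment would break the argument. Everything else is a direct unwinding of definitions in the expected order (a), (b), (c), then the three corollaries (d), (e), (f).
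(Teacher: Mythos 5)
Your proposal is correct and follows essentially the same route as the paper: every item is reduced to the sandwich $U\sse\widehat{U}\sse clU$ from Proposition~\ref{hatu}(a) together with the definition of $c$. The only cosmetic differences are that you prove the right inclusion in (c) by contraposition where the paper argues directly, and you deduce (f) from (c) where the paper unwinds the definition again; both variants are equivalent.
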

\begin{proof}
(a) If $x\in A$ and $U$ is an open set containing $x$, then $x\in\widehat{U}\meet A$ by Proposition~\ref{hatu}(a).

(b) If $x\in c(A)$ then $\es\neq \widehat{U}\meet A\sse\widehat{U}\meet B$ for all open sets $U$ containing $x$. This shows $x\in c(B)$.

(c) If $x\in clA$, then by Proposition~\ref{hatu}(a), $\es\neq U\meet A\sse\widehat{U}\meet A$ for all open sets $U$ containing $x$. Thus $x\in c(A)$. And if $x\in c(A)$, then $\es\neq\widehat{U}\meet A\sse clU\meet A$, also by Proposition~\ref{hatu}(a). Thus, $x\in cl_\theta(A)$.

(d) As $clU=cl_\theta U$ for an open set $U$, the equality follows from (b). The containment follows from (a) and Propositio~\ref{hatu}(a).

(e) As $clA=cl_\theta A$ for regular spaces, the result follows from (b).

(f) if $A$ is c-closed and $x\in X\minus A$, then there exists an open set $U$ containing $x$ such that $\es= \widehat{U}\meet A\supseteq U\meet A$. Thus $A$ is closed.
\end{proof}

The following example provides a space $X$ and a subset $A$ such that  $c(A) \ne cl_X(A)$. 

\begin{example}\label{c(A)notA}
Consider the Kat\v etov H-closed  extension $\kappa\omega$ of $\omega$ with the discrete topology (cf. Ch 7 in \cite{PW}).    Recall that $\kappa\omega(s) = \beta\omega$; that is, $\beta\omega$ is the underlying set of $\kappa\omega$. Also note that  $\kappa\omega\backslash \omega$ is discrete and closed.  By 9.11 in \cite{GJ}, the closed set $\beta\omega\backslash\omega$ contains a copy of $\beta\omega$. That is, there a countable discrete subspace $A$ of $\beta\omega\backslash \omega$ such that $\beta A = \beta\omega$, in particular, $cl_{\beta\omega}A = \beta\omega$.  Then $A$ is a closed subset of $\kappa\omega$.  Let $k: \beta\omega \rightarrow \kappa\omega$ denote the identity function. $\beta\omega$ is an extremally disconnected, Tychonoff space and the bijection $k$ is perfect, irreducible, and $\theta-$continuous.  By 6.7(a) in~\cite{PW},  $EX = \beta\omega$ is the absolute of $X = \kappa\omega$ with $k: EX \rightarrow X$ the absolute map. There is only one injective function $b:X \rightarrow EX$ such that $k\circ b = id_X$.  It follows that $c(A) = cl_{\beta\omega}(A)$.  By 9.3 in~\cite{GJ},  $c(A)$ has cardinality $2^{\mathfrak c}$; thus, $c(A) \ne cl_X(A) = A$.  This example also illustrates that there can be a marked size difference between $c(A)$ and $cl(A)$.
\qed
\end{example}

Observe that for the space $X = \kappa\omega$ in Example~\ref{c(A)notA},  we have $\sigma_b \subsetneq \tau$, where $\tau$ is the topology on $X$. However, by 5.1(d) in~\cite{pw78}, for a regular space $X$, we have that $\tau \subseteq \sigma_b$. Thus there is no universal containment relationship between $\tau$ and $\sigma_b$.

Let $X$ be a space, $U \in \tau(X)$, and $b:X \rightarrow EX$ be a section. By~\ref{hatu}(d), it follows that $\widehat{U}_b$ is also closed in $\sigma_b$. The next example shows that $\widehat{U}_b$ may not be c-closed.

\begin{example}
Let $\scr U$ and $\scr V$ be distinct free open ultrafilters on $\omega$, i.e., distinct  points in $\beta\omega\backslash\omega$.  Let $\alpha\omega$ denote the compactification of $\omega$ (discrete topology) where $\scr U$ and $\scr V$ in $\beta\omega$ are identified as the point $y$.  Let $X = \alpha\omega$.  Then $EX  = \beta\omega$ and $k:EX \rightarrow X$ is the identity function on $EX\backslash \{\scr {U,V}\}$ and $k(\scr U) = k(\scr V) = y$. Consider the section defined by the function $$b:X \rightarrow EX:x \mapsto \left \{\begin{array}{l@{\quad \quad}l} x & x \ne y \\  \vspace{0mm} \scr U & x = y.\end{array} \right.$$ 
For $A \in [\omega]^{\omega}$, let 
$$o_{\alpha}A = A \cup \{\scr W \in \beta\omega\backslash \{\scr {U,V}\}: A \in \scr W\} \cup \left \{\begin{array}{l@{\quad \quad}l} \varnothing & A \not\in  \scr U \cap \scr V \\  \vspace{0mm} \{y\} & A \in  \scr U \cap \scr V\end {array}\right. \}.$$  
Then $\{o_{\alpha}A:A \in [\omega]^{\omega}\} \cup \{ \{n\}: n \in \omega \}$ is a base for $\tau(\alpha\omega)$. In particular, a neighborhood base for $y$ is $\{o_{\alpha}W:W \in {\scr U} \cap {\scr V}\}$.  Also, note that for $A \in [\omega]^{\omega}$, $$\widehat{o_{\alpha}A} =o_{\alpha}A \cup \left \{\begin{array}{l@{\quad \quad}l} \varnothing & A \not\in  \scr U \\  \vspace{0mm} \{y\} & A \in  \scr U \end {array}\right. .$$  Let $T \in \scr V\backslash \scr U$.  Then $y \not\in \widehat{o_{\alpha}T}$.  For $W \in {\scr U} \cap {\scr V}$,    $y \in o_{\alpha}W $ and $\widehat{o_{\alpha}W} \cap \widehat{o_{\alpha}T} \supseteq W \cap T \ne \varnothing$.  That is, $c(\widehat{o_{\alpha}T}) = \widehat{o_{\alpha}T} \cup \{y\} \ne \widehat{o_{\alpha}T}$. \qed
\end{example}

In view of Proposition~\ref{properties}(f) and the fact that any space is $c$-closed in itself, we have the following:

\begin{corollary}\label{anotherProp} 
For any space $X$, $aL(X)\leq aL^\prime(X)\leq aL_c(X)\leq L(X)$.
\end{corollary}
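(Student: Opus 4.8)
The plan is to observe that all three invariants in the chain are defined (or, in the case of $aL(X)$ and $L(X)$, can be realized) as suprema of the quantities $aL(C,X)$ taken over various families of subsets $C$, so that each inequality reduces to comparing the index families: if one family is contained in another, the corresponding supremum is no larger. The rightmost inequality $aL_c(X)\leq L(X)$ has already been recorded in the text, so I only need to treat $aL(X)\leq aL^\prime(X)$ and $aL^\prime(X)\leq aL_c(X)$, and for the latter the single hypothesis I will invoke is Proposition~\ref{properties}(f).

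For $aL(X)\leq aL^\prime(X)$, the key remark is that $X$ is $c$-closed in itself. Indeed, by Proposition~\ref{properties}(a) we have $X\sse c(X)$, and since $c(X)\sse X$ trivially, $c(X)=X$. Hence $X$ is one of the $c$-closed sets $C$ appearing in the supremum that defines $aL^{b^\prime}(X)$, where $X_{b^\prime}$ is the section witnessing $aL^\prime(X)=aL^{b^\prime}(X)$. Therefore $aL(X)=aL(X,X)\leq\sup\{aL(C,X):C\textup{ is }c\textup{-closed}\}+\aleph_0=aL^{b^\prime}(X)=aL^\prime(X)$.

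For $aL^\prime(X)\leq aL_c(X)$, I will use Proposition~\ref{properties}(f): every $c$-closed subset of $X$ is closed. Consequently the family of $c$-closed sets is contained in the family of closed sets, and the supremum defining $aL^\prime(X)=aL^{b^\prime}(X)$ is taken over a subfamily of the family used to define $aL_c(X)$. Shrinking the index family can only decrease the supremum, so $aL^\prime(X)\leq aL_c(X)$. Chaining this with the already-noted $aL_c(X)\leq L(X)$ completes the argument.

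This corollary is short and I do not anticipate a genuine obstacle; the only points requiring care are bookkeeping ones. First, one must make sure the monotonicity of the supremum is applied in the correct direction, namely that a smaller index family yields a smaller (or equal) supremum. Second, one must be explicit that the relevant operator in Definition~\ref{alprime} is $c=c_{b^\prime}$ for the witnessing section, so that Proposition~\ref{properties}(f) and the remark $c(X)=X$ apply verbatim to the family over which $aL^{b^\prime}(X)$ is computed. No additional estimates or constructions are needed.
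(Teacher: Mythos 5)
Your proposal is correct and follows exactly the route the paper intends: the paper's entire justification is the remark that every space is $c$-closed in itself together with Proposition~\ref{properties}(f), and you have simply spelled out the resulting comparison of index families in the suprema. No issues.
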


For a space $X$, we define a cardinal invariant $t_c(X)$ related to the tightness $t(X)$. While $t_c(X)$ and $t(X)$ appear to be incomparable, Proposition~\ref{tcx} shows that $t_c(X)$ is bounded above by the character $\chi(X)$. 

\begin{definition}
For a space $X$, the $c$-\emph{tightness} of X, $t_c(X)$, is defined as the least cardinal $\kappa$ such that if $x\in c(A)$ for some $x\in X$ and $A\sse X$, then there exists $B\in[A]^{\leq\kappa}$ such that $x\in c(B)$.
\end{definition}

Note that $t(\kappa\omega) = \aleph_0$ and $t_c(\kappa\omega) = t(\beta\omega) = \frak{c}$. This shows that $t(\kappa\omega) $ and $t_c(\kappa\omega)$ are not equal.

\begin{proposition}\label{tcx}
For any space $X$, $t_c(X)\leq\chi(X)$. If $X$ is regular then $t_c(X)=t(X)$.
\end{proposition}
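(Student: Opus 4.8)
The plan is to prove the two claims separately. For the inequality $t_c(X)\leq\chi(X)$, set $\kappa=\chi(X)$ and suppose $x\in c(A)$ for some $A\sse X$. First I would fix a local base $\scr{B}_x$ at $x$ with $|\scr{B}_x|\leq\kappa$. The definition of $c$ says that for \emph{every} open $U\ni x$ we have $\widehat{U}\meet A\neq\es$; in particular this holds for each $U\in\scr{B}_x$. For each such $U$ choose a point $a_U\in\widehat{U}\meet A$, and let $B=\{a_U:U\in\scr{B}_x\}$. Then $B\in[A]^{\leq\kappa}$. The key step is to verify $x\in c(B)$: given an arbitrary open $V\ni x$, pick $U\in\scr{B}_x$ with $U\sse V$; then $a_U\in\widehat{U}_b\meet B\sse\widehat{V}_b\meet B$ by monotonicity of $\widehat{(\cdot)}_b$ (which follows from Proposition~\ref{hatu}(c), since $U\sse V$ gives $\widehat{U}_b=(\widehat{U\meet V})_b=\widehat{U}_b\meet\widehat{V}_b\sse\widehat{V}_b$). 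Hence $\widehat{V}\meet B\neq\es$ for every open $V\ni x$, so $x\in c(B)$, giving $t_c(X)\leq\kappa=\chi(X)$.

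For the second assertion, that $t_c(X)=t(X)$ when $X$ is regular, I would invoke Proposition~\ref{properties}(e), which states that for a regular space $c(A)=clA$ for every $A\sse X$. Under this equality the operator $c$ coincides with the ordinary closure, so the defining condition ``$x\in c(A)$ implies $x\in c(B)$ for some $B\in[A]^{\leq\kappa}$'' becomes verbatim the condition ``$x\in clA$ implies $x\in clB$ for some $B\in[A]^{\leq\kappa}$,'' which is exactly the definition of ordinary tightness $t(X)$. Thus the two cardinal invariants are computed from identical conditions and must be equal.

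I do not anticipate a serious obstacle here; the argument is a routine adaptation of the standard proof that $t(X)\leq\chi(X)$, with the ordinary closure replaced by $c$ and the crucial monotonicity of $U\mapsto\widehat{U}_b$ supplied by Proposition~\ref{hatu}(c). The one point requiring care is ensuring the selected witnesses lie in $A$ (not merely in $\widehat{U}$): since $a_U$ is chosen in the intersection $\widehat{U}\meet A$, this is automatic, and $B\sse A$ as required. The regular case is immediate once Proposition~\ref{properties}(e) is cited.
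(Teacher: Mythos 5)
Your proposal is correct and follows essentially the same argument as the paper: select witnesses $a_U\in\widehat{U}\meet A$ over a local base of size $\chi(X)$ and verify membership in $c$ of the resulting set via the upward-closure of the open ultrafilters $\scr{U}_{a_U}$ (your monotonicity of $U\mapsto\widehat{U}_b$ from Proposition~\ref{hatu}(c) is just a repackaging of the paper's ``$\scr{U}_{a_N}$ is an open filter'' step), with the regular case handled identically by citing Proposition~\ref{properties}(e).
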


\begin{proof}
To show $t_c(X)\leq\chi(X)$, let $\kappa=\chi(X)$ and let $x\in c(A)$. Let $\scr{N}$ be an open neighborhood base at $x$ such that $|\scr{N}|=\kappa$. For all $N\in\scr{N}$ there exists $a_N\in\widehat{N}\meet A$. Let $D=\{a_N:N\in\scr{N}\}\in[A]^{\leq\kappa}$. We show $x\in c(D)$. Let $U$ be an open set containing $x$. There exists $N\in\scr{N}$ such that $x\in N\sse U$. As $a_N\in\widehat{N}\meet A$, we have $N\in\scr{U}_{a_N}$. As $\scr{U}_{a_N}$ is an open filter, we have that $U\in\scr{U}_{a_N}$ and $a_N\in\widehat{U}\meet D$. This shows $x\in c(D)$ and $t_c(X)\leq\kappa$. If $X$ is regular then $t_c(X)=t(X)$ follows from Proposition~\ref{properties}(e).
\end{proof}

In Theorem~\ref{EXchar}, we give a characterization of $c(A)$ for a subset $A\sse X$ in terms of the absolute $EX$. This is one of several results below that describe how $c(A)$ relates to the broader framework of $EX$.

Let $\scr K =\{k^{\leftarrow}(p): p \in X\}$, where $k:EX\to X$ is as in~\ref{ex}(a). For $A \subseteq EX$, define $cl_{\scr K}A = A \cup \bigcup\{K \in \scr K:$ if $K \subseteq 0U$ for  $U \in \tau(X), 0U \cap A \ne \varnothing \}$. 

\begin{lemma}\label{closures}
For $A \subseteq EX$, $cl_Xk[A] \subseteq k[cl_{EX}A] \subseteq k[cl_{\scr K}A] \subseteq cl_{\theta}k[A]$.
\end{lemma}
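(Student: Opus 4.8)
My plan is to prove the three inclusions separately, reading each off from the basic properties of the absolute map $k$ recorded in Proposition~\ref{ex}. Throughout I use that $\{0U:U\in\tau(X)\}$ is an open base for $EX$, that $0(U\un V)=0U\un 0V$ and $0(U\meet V)=0U\meet 0V$ (\ref{ex}(f)), that $k[0U]=clU$ (\ref{ex}(c)), and the two-directional test $k^{\leftarrow}(k(\scr{U}))\sse 0U$ iff $k(\scr{U})\in int(cl(U))$ (\ref{ex}(d)).

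For the first inclusion $cl_Xk[A]\sse k[cl_{EX}A]$, fix $p\in cl_Xk[A]$; I would show $k^{\leftarrow}(p)\meet cl_{EX}A\neq\es$, which immediately gives $p\in k[cl_{EX}A]$. Suppose not. Then every $\scr{U}\in k^{\leftarrow}(p)$ has a basic open neighborhood $0U_{\scr{U}}$ with $0U_{\scr{U}}\meet A=\es$. Since $k$ is perfect (\ref{ex}(a)), the fiber $k^{\leftarrow}(p)$ is compact, so finitely many such sets cover it; writing $V$ for the union of the corresponding open sets $U_{\scr{U}}$, Proposition~\ref{ex}(f) yields $k^{\leftarrow}(p)\sse 0V$ and $0V\meet A=\es$. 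By \ref{ex}(d), $k^{\leftarrow}(p)\sse 0V$ forces $p\in int(cl(V))$, an open neighborhood of $p$. As $p\in cl_Xk[A]$, there is $\scr{W}\in A$ with $k(\scr{W})\in int(cl(V))$; applying \ref{ex}(d) once more gives $\scr{W}\in k^{\leftarrow}(k(\scr{W}))\sse 0V$, so $\scr{W}\in 0V\meet A$, contradicting $0V\meet A=\es$.

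For the second inclusion it suffices to check $cl_{EX}A\sse cl_{\scr{K}}A$ and then apply $k$. Given $\scr{U}\in cl_{EX}A$, let $K=k^{\leftarrow}(k(\scr{U}))$ be its fiber. For any $U\in\tau(X)$ with $K\sse 0U$ we have $\scr{U}\in 0U$, and since $0U$ is open and $\scr{U}\in cl_{EX}A$ we get $0U\meet A\neq\es$; thus $K$ meets the defining condition and $\scr{U}\in K\sse cl_{\scr{K}}A$. For the third inclusion $k[cl_{\scr{K}}A]\sse cl_\theta k[A]$, take $p=k(\scr{U})$ with $\scr{U}\in cl_{\scr{K}}A$. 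If $\scr{U}\in A$ then $p\in k[A]\sse cl_\theta k[A]$. Otherwise $\scr{U}$ lies in a fiber $K=k^{\leftarrow}(p)$ satisfying the condition in the definition of $cl_{\scr{K}}A$. Given any open $V\ni p$, openness gives $p\in int(cl(V))$, so $K\sse 0V$ by \ref{ex}(d); the fiber condition then yields $\scr{W}\in 0V\meet A$, and \ref{ex}(c) gives $k(\scr{W})\in k[0V]\meet k[A]=clV\meet k[A]$. Hence every open neighborhood of $p$ has closure meeting $k[A]$, i.e. $p\in cl_\theta k[A]$.

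I expect the first inclusion to be the main obstacle, since it is the only step requiring a genuine covering argument: one must invoke compactness of the fiber $k^{\leftarrow}(p)$ to pass from pointwise neighborhoods missing $A$ to a single basic set $0V$ missing $A$, and then use the nontrivial equivalence in \ref{ex}(d) in both directions to translate between ``$k^{\leftarrow}(p)\sse 0V$'' and ``$p\in int(cl(V))$.'' The remaining two inclusions are essentially bookkeeping with the definition of $cl_{\scr{K}}A$ and the identities $k[0U]=clU$ and $0(U\un V)=0U\un 0V$.
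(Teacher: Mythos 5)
Your proof is correct, and the second and third inclusions are argued exactly as in the paper (reduce the middle inclusion to $cl_{EX}A\sse cl_{\scr K}A$ via the fiber of $\scr U$; for the last one, pass from an open $V\ni p$ to $k^{\leftarrow}(p)\sse 0V$ via Proposition~\ref{ex}(d) and then use $k[0V]=clV$). The only place you genuinely diverge is the first inclusion: the paper dispatches $cl_Xk[A]\sse k[cl_{EX}A]$ in one line by citing that $k$ is a closed map (part of ``perfect'' in \ref{ex}(a)), since then $k[cl_{EX}A]$ is a closed set containing $k[A]$. Your covering argument with compact fibers and the identity $0(\Un_i U_i)=\Un_i 0U_i$ is a correct, self-contained re-derivation of exactly that closedness property in this concrete setting; it costs more space but has the virtue of not leaning on the packaged statement that $k$ is closed, instead building everything from compact fibers, the basis $\{0U\}$, and the equivalence in \ref{ex}(d). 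Either route is fine; if you want to match the paper's economy, just note that a perfect map is closed and conclude directly.
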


\begin{proof} As $k$ is a closed function, we immediately have that $cl_Xk[A] \subseteq k[cl_{EX}A]$. To show $k[cl_{EX}A] \subseteq k[cl_{\scr K}A]$, it suffices to show that  $cl_{EX}A \subseteq cl_{\scr K}A$.  Let $\scr U \in cl_{EX}A$ and $K = k^{\leftarrow}(k(\scr U) \subseteq 0U$ for some  $U \in \tau(X)$.  Then $\scr U \in 0U$ and $0U \cap A \ne \varnothing$ as $\scr U \in cl_{EX}A$.  Thus, $\scr U \in K \subseteq cl_{\scr K}A$.
 \newline Now, let $p \in k[cl_{\scr K}A]$ and $p \in U  \in \tau(X)$.  Then, by Proposition~\ref{ex}(d), $ k^{\leftarrow}(p) \subseteq 0U$.  So, $0U \cap A \ne \varnothing$ and $\varnothing \ne k[0U \cap A] \subseteq k[0U] \cap k[A] \subseteq cl_X(U) \cap k[A]$. Therefore, $x \in cl_{\theta}k[A]$. 
\end{proof}

\begin{lemma}\label{clKchar}
For $A\sse EX$, 
$$cl_{\scr{K}}A=\Un\{K\in\scr{K}:K\meet cl_\theta A\neq\es\}=\Un k^{\leftarrow}[k[cl_\theta A]]=k^{\leftarrow}[k[cl_{EX}A]].$$
\end{lemma}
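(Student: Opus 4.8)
The plan is to reduce the fourfold equality to a single nontrivial inclusion, handling the remaining identifications by elementary bookkeeping about the fibers $K=k^{\leftarrow}(p)$ together with the regularity of $EX$. Since $EX$ is Tychonoff, hence regular, the $\theta$-closure and the ordinary closure agree on subsets of $EX$, so $cl_\theta A=cl_{EX}A$. For any $S\sse EX$ a fiber $k^{\leftarrow}(p)$ meets $S$ exactly when $p\in k[S]$, whence $\Un\{K\in\scr{K}:K\meet S\neq\es\}=k^{\leftarrow}[k[S]]$. Applying this with $S=cl_\theta A=cl_{EX}A$ shows at once that the second, third, and fourth displayed sets all coincide with $k^{\leftarrow}[k[cl_{EX}A]]$. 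It therefore remains only to prove $cl_{\scr{K}}A=k^{\leftarrow}[k[cl_{EX}A]]$.

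First I would record that $cl_{\scr{K}}A$ is in fact a union of whole fibers. If a fiber $K$ meets $A$, say $\scr{U}\in K\meet A$, then whenever $K\sse 0U$ we have $\scr{U}\in 0U\meet A$, so $K$ satisfies the defining condition of $cl_{\scr{K}}$; hence $A$ is contained in the union of the qualifying fibers, and $cl_{\scr{K}}A$ is $\scr{K}$-saturated, i.e.\ $k^{\leftarrow}[k[cl_{\scr{K}}A]]=cl_{\scr{K}}A$. Combined with the inclusion $cl_{EX}A\sse cl_{\scr{K}}A$ already established in the proof of Lemma~\ref{closures}, saturation yields $k^{\leftarrow}[k[cl_{EX}A]]\sse k^{\leftarrow}[k[cl_{\scr{K}}A]]=cl_{\scr{K}}A$, which gives one of the two inclusions essentially for free.

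For the reverse inclusion $cl_{\scr{K}}A\sse k^{\leftarrow}[k[cl_{EX}A]]$, since both sides are unions of fibers it suffices to show that every qualifying fiber $K=k^{\leftarrow}(p)$ meets $cl_{EX}A$. This is the step I expect to be the main obstacle, and the plan is to argue by contradiction using compactness. If $K\meet cl_{EX}A=\es$, then each $\scr{W}\in K$ has a basic neighborhood $0V_{\scr{W}}$ (with $V_{\scr{W}}\in\scr{W}$) disjoint from $cl_{EX}A$, hence from $A$. Because $k$ is perfect (Proposition~\ref{ex}(a)), the fiber $K$ is compact, so finitely many $0V_{\scr{W}_1},\dots,0V_{\scr{W}_n}$ cover $K$. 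Setting $U=V_{\scr{W}_1}\un\dots\un V_{\scr{W}_n}$ and invoking $0(V\un V')=0V\un 0V'$ from Proposition~\ref{ex}(f), I obtain $K\sse 0U$ while $0U\meet A=\es$, contradicting the fact that $K$ qualifies. Thus $K\meet cl_{EX}A\neq\es$, which completes the inclusion and the proof.

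The genuinely essential ingredient is the compactness of the fibers of the perfect map $k$: it is precisely what converts the universally quantified membership condition defining $cl_{\scr{K}}A$ (``every basic $0U$ containing $K$ meets $A$'') into the existence of a single adherent ultrafilter $\scr{W}\in K\meet cl_{EX}A$. Everything else — the regularity of $EX$ collapsing $cl_\theta$ to $cl_{EX}$, the translation between fibers meeting a set and the saturated preimage, and the easy direction that a fiber meeting $cl_{EX}A$ automatically qualifies — is routine and, as noted, the easy direction is already subsumed by the saturation argument.
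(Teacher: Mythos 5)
Your proof is correct and rests on exactly the same key step as the paper's: the compactness of the fibers of the perfect map $k$ together with $0U\un 0V=0(U\un V)$ converts ``$K$ fails to meet the closure of $A$'' into a single basic set $0U\supseteq K$ missing $A$. The only cosmetic differences are that you collapse $cl_\theta$ to $cl_{EX}$ at the outset and obtain the easy inclusion from $\scr{K}$-saturation plus Lemma~\ref{closures} rather than verifying directly that a fiber meeting $cl_\theta A$ qualifies; both are sound.
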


\begin{proof}
Suppose $K\meet cl_\theta A=\es$ for $K\in\scr{K}$. Then for each $\scr{U}\in K$, there is $U_\scr{U}$ such that $0(U_\scr{U})\meet A=\es$ and $\{0(U_\scr{U}):\scr{U}\in K\}$ is an open cover of the compact set $K$. There exists $\scr{U}_1,\ldots,\scr{U}_n\in K$ such that 
$$K\sse\Un_{i=1}^n 0(U_{\scr{U}_i})=0(\Un_{i=1}^nU_{\scr{U}_i}),$$
by~\ref{ex}(f). Let $U=\Un_{i=1}^nU_{\scr{U}_i}$. As $K \subseteq 0(U)$ and $0(U) \cap A = \varnothing$, we see that $K \cap cl_{\scr K}A = \varnothing$. Conversely, suppose $\scr{U}\in K\meet cl_\theta A$ and $K\sse 0(U)$. Then $\scr{U}\in 0(U)$ and $0(U)\meet A\neq\es$. Thus $K\sse cl_\scr{K}A$. This shows the first equality.

To show $\Un\{K\in\scr{K}:K\meet cl_\theta A\neq\es\}=\Un k^{\leftarrow}[k[cl_\theta A]]$, it suffices to note that if $\scr{U}\in cl_\theta A$, then $k^{\leftarrow}[k(\scr{U})]\meet cl_\theta A\neq\es$ and $k^\leftarrow[k(\scr{U})]\in\scr{K}$. The equality $\Un k^{\leftarrow}[k[cl_\theta A]]=k^{\leftarrow}[k[cl_{EX}A]]$ follows as $EX$ is Tychonoff.
\end{proof}

\begin{theorem}\label{EXchar}
Let $X$ be a space and $A \subseteq X$.  Then $c(A) = k[cl_{\scr K}A']=k[cl_{EX}A^\prime]$.
\end{theorem}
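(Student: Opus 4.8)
The plan is to prove the two equalities in Theorem~\ref{EXchar}, namely $c(A) = k[cl_{\scr K}A'] = k[cl_{EX}A']$, where recall that $A' = b'[A] \sse EX$ is the image of $A$ under the distinguished section. The second equality $k[cl_{\scr K}A'] = k[cl_{EX}A']$ is the easier one and should follow directly from Lemma~\ref{clKchar}: that lemma establishes $cl_{\scr K}(A') = k^{\leftarrow}[k[cl_{EX}A']]$, and applying $k$ to both sides gives $k[cl_{\scr K}A'] = k[k^{\leftarrow}[k[cl_{EX}A']]] = k[cl_{EX}A']$, using that $k$ is surjective so that $k \circ k^{\leftarrow} = \mathrm{id}$ on the image. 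So the real work is in the first equality $c(A) = k[cl_{\scr K}A']$.

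For the first equality I would argue by double containment, unwinding the definition of $c(A)$ against the definition of $cl_{\scr K}$. Recall $c(A) = \{x : \widehat{U}\meet A \neq \es \text{ for all open } U \ni x\}$, and that by Proposition~\ref{hatu}(b) we have $\widehat{U} = k[0U \meet X_{b'}]$. First I would show $k[cl_{\scr K}A'] \sse c(A)$: take $x = k(\scr{U})$ for some $\scr{U} \in cl_{\scr K}A'$, and let $U$ be an arbitrary open set containing $x$. By Proposition~\ref{ex}(d), since $x \in U$ one has (modulo passing to $int(cl(U))$, using \ref{ex}(b)) that $k^{\leftarrow}(x) \sse 0U$, hence the whole fiber $K = k^{\leftarrow}(x)$ sits inside $0U$. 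The membership $\scr{U} \in cl_{\scr K}A'$ then forces $0U \meet A' \neq \es$ by the defining clause of $cl_{\scr K}$. I would then translate $0U \meet A' \neq \es$ back into $\widehat{U} \meet A \neq \es$: a point of $A'$ in $0U$ is $b'(a)$ for some $a \in A$ with $U \in b'(a)$, i.e.\ $a \in \widehat{U} \meet A$. Since $U$ was arbitrary, $x \in c(A)$.

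Conversely, for $c(A) \sse k[cl_{\scr K}A']$, I would take $x \in c(A)$ and produce a point of $cl_{\scr K}A'$ in the fiber $k^{\leftarrow}(x)$. The natural candidate is $\scr{U}_x = b'(x)$, which lies in the fiber since $k(b'(x)) = x$. To show $\scr{U}_x \in cl_{\scr K}A'$, I need to verify the defining condition: whenever $k^{\leftarrow}(x) \sse 0U$ for some open $U$, then $0U \meet A' \neq \es$. But if $k^{\leftarrow}(x) \sse 0U$ then in particular $\scr{U}_x \in 0U$, so $U \in \scr{U}_x = b'(x)$, giving $x \in \widehat{U}$; and $x \in c(A)$ then yields $\widehat{U} \meet A \neq \es$, which reverses (via \ref{hatu}(b)) to $0U \meet A' \neq \es$. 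Then $x = k(\scr{U}_x) \in k[cl_{\scr K}A']$.

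The main obstacle I anticipate is the bookkeeping around Proposition~\ref{ex}(d), which characterizes when the \emph{entire} fiber $k^{\leftarrow}(x)$ lands in $0U$ — namely iff $x \in int(cl(U))$, not merely $x \in U$. In the forward direction ($k[cl_{\scr K}A'] \sse c(A)$) this is used carefully via \ref{ex}(b) to replace $U$ by $int(cl(U))$ without changing $0U$ or $\widehat{U}$; in the reverse direction the condition $k^{\leftarrow}(x) \sse 0U$ is \emph{hypothesized} by the $cl_{\scr K}$ definition so no issue arises, but I must be sure the implication $\scr{U}_x \in 0U \Rightarrow x \in \widehat{U}$ is exactly Definition~\ref{hatdef}. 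Keeping straight the correspondence between the set-level statements about $\widehat{U} \meet A$ and the $EX$-level statements about $0U \meet A'$ — always mediated by $k \circ b' = \mathrm{id}$ and Proposition~\ref{hatu}(b) — is the part requiring the most care, though each individual step is routine.
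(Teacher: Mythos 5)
Your overall strategy coincides with the paper's: the second equality comes from applying $k$ to the identity $cl_{\scr K}A' = k^{\leftarrow}[k[cl_{EX}A']]$ of Lemma~\ref{clKchar}, and the first is a double containment mediated by $k\circ b'=\mathrm{id}_X$, Proposition~\ref{hatu}(b), and Proposition~\ref{ex}(b),(d). The containment $k[cl_{\scr K}A']\sse c(A)$ is handled correctly, including the passage from $U$ to $int(cl(U))$ needed to invoke Proposition~\ref{ex}(d).

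However, one step in the containment $c(A)\sse k[cl_{\scr K}A']$ does not follow as written. From the hypothesis $k^{\leftarrow}(x)\sse 0U$ you deduce $U\in\scr{U}_x$, hence $x\in\widehat{U}$, and then assert that $x\in c(A)$ ``yields $\widehat{U}\meet A\neq\es$.'' But the defining condition of $c(A)$ quantifies only over open sets $U$ with $x\in U$, while $x\in\widehat{U}$ guarantees only $x\in cl\,U$; the point $x$ need not lie in $U$, so the definition of $c(A)$ cannot be applied to $U$ directly. The repair is precisely the $int(cl(U))$ bookkeeping you flagged for the other direction but declared unnecessary here: $k^{\leftarrow}(x)\sse 0U$ gives $x\in int(cl(U))$ by Proposition~\ref{ex}(d); this set is open and contains $x$, so the $c(A)$ condition applies to it and yields $\widehat{int(cl(U))}\meet A\neq\es$; and $\widehat{int(cl(U))}=\widehat{U}$ by Proposition~\ref{ex}(b) together with Proposition~\ref{hatu}(b), whence $0U\meet A'\neq\es$ as required. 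This is exactly how the paper argues this direction (it first extracts $p\in int(cl(U))$ from the fiber condition and finds $a\in A$ with $int(cl(U))\in\scr{U}_a$). With that one-line patch your proof is complete and matches the paper's.
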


\begin{proof} We first show the first equality. Clearly, $A \subseteq c(A) \cap k[cl_{\scr K}A']$.  Let $p \in c(A)\backslash A$ and $k^{\leftarrow}(p) \subseteq 0U$ where $U \in \tau(X)$.  By~\ref{ex}(d), $p \in int(cl(U))$.  There is $a \in A$ such that $int(cl(U)) \in\scr{U}_a$.  By~\ref{ex}(b), $U \in\scr{U}_a \subseteq k^{\leftarrow}(a)$.  Thus, $\scr{U}_a \in 0(int(cl(U))) = 0(U)$.  That is, $\scr{U}_a \in 0(U) \cap A'$.  This shows that $k^{\leftarrow}(p) \subseteq cl_{\scr K}A'$ and $p\in k[cl_{\scr K}A']$.  Conversely suppose $p\in k[cl_{\scr K}A']\backslash A$.  Let $p \in U \in \tau(X)$.  Then $k^{\leftarrow}(p) \subseteq 0(U)$ and $0(U) \cap A' \ne \varnothing$.  There is $a \in A$ such that $\scr{U}_a \in 0(U)$. Thus, $U \in\scr{U}_a$ and $p \in c(A)$. This shows that $c(A) = k[cl_{\scr K}A']$.

To show the second equality, note that by Lemma~\ref{clKchar}, we have
$$c(A)=k[cl_{\scr K}A']=k[k^{\leftarrow}[k[cl_{EX}A^\prime]]]= k[cl_{EX}A^\prime].$$
\end{proof}

As the map $k:EX\to X$ is always a closed map, we have the following corollary to Theorem~\ref{EXchar}.

\begin{corollary}
For any space $X$ and every $A\sse X$, $c(A)$ is closed subset of $X$.
\end{corollary}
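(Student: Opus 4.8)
The plan is to obtain this as an immediate consequence of the characterization established in Theorem~\ref{EXchar}. That theorem expresses the operator $c$ through the absolute by the identity
$$c(A)=k[cl_{EX}A^\prime],$$
so the entire content reduces to recognizing that the right-hand side is the image, under the map $k:EX\to X$, of a set that is closed in $EX$. Since $cl_{EX}A^\prime$ is the $EX$-closure of $A^\prime=b^\prime[A]$, it is by construction a closed subset of $EX$, and there is nothing further to check on the domain side.

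The single fact I would invoke about $k$ is that it is a closed map. This is recorded in Proposition~\ref{ex}(a), where $k$ is asserted to be perfect, and perfect maps are in particular closed. A closed map carries closed sets to closed sets, so $k[cl_{EX}A^\prime]$ is closed in $X$. Combining this with the displayed equality from Theorem~\ref{EXchar} yields that $c(A)$ is closed in $X$, as desired.

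There is essentially no obstacle here: the two ingredients—the $EX$-characterization of $c(A)$ and the closedness of $k$—are both already available, and the argument is a one-line composition of them. The only point that merits an explicit mention in the writeup is the passage from ``perfect'' to ``closed'' for $k$, so that the reader sees precisely why Proposition~\ref{ex}(a) suffices; beyond that, the corollary follows formally.
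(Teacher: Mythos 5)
Your argument is correct and is exactly the paper's: the corollary is derived from the identity $c(A)=k[cl_{EX}A^\prime]$ of Theorem~\ref{EXchar} together with the fact that $k$, being perfect by Proposition~\ref{ex}(a), is a closed map. Nothing differs beyond your (reasonable) extra sentence making the ``perfect implies closed'' step explicit.
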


By Theorem~\ref{EXchar} and Proposition~\ref{properties}(c), we also have the following corollary. We see that Corollary~\ref{ccorollary} is stronger than Proposition~\ref{properties}(c) and demonstrates how $c(A)$ sits between $cl_XA$ and $cl_\theta A$ in terms of the absolute $EX$.

\begin{corollary}\label{ccorollary}
Let $X$ be a space and $A \subseteq X$. Then $cl_XA \subseteq k[cl_{EX}A']= c(A) \subseteq cl_{\theta}A$.
\end{corollary}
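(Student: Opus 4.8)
The plan is to obtain Corollary~\ref{ccorollary} as a direct consequence of the two results cited in the paragraph preceding it, namely Theorem~\ref{EXchar} and Proposition~\ref{properties}(c), together with the already-established fact that $k$ is a closed map. The only genuinely new assertion to verify is the middle equality $k[cl_{EX}A'] = c(A)$, and the surrounding inclusions $cl_X A \subseteq k[cl_{EX}A']$ and $c(A) \subseteq cl_\theta A$.

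First I would invoke Theorem~\ref{EXchar} directly: applied to the set $A \subseteq X$, it gives $c(A) = k[cl_{\scr K}A'] = k[cl_{EX}A']$, which immediately supplies the central equality $k[cl_{EX}A'] = c(A)$ without further work. Second, for the outer inclusions I would simply read off Proposition~\ref{properties}(c), which states $cl_X A \subseteq c(A) \subseteq cl_\theta A$ for every $A \subseteq X$. Substituting the equality $c(A) = k[cl_{EX}A']$ into the left-hand inclusion yields $cl_X A \subseteq k[cl_{EX}A']$, and the right-hand inclusion $c(A) \subseteq cl_\theta A$ is already in the desired form. Chaining these together produces the displayed string $cl_X A \subseteq k[cl_{EX}A'] = c(A) \subseteq cl_\theta A$.

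There is essentially no obstacle here, since both ingredients are proved earlier in the excerpt; the corollary is a repackaging that emphasizes the role of the absolute $EX$. If one wanted to be self-contained about why $k[cl_{EX}A']$ is the correct intermediary, the substantive content lives in the proof of Theorem~\ref{EXchar} (the argument via Proposition~\ref{ex}(b),(d) identifying points $p \in c(A) \setminus A$ with open ultrafilters $\scr{U}_a \in 0(U) \cap A'$), but for the corollary itself I would not reproduce that. The one point worth a sentence of care is the transition from $cl_{\scr K}A'$ to $cl_{EX}A'$, which Theorem~\ref{EXchar} handles using Lemma~\ref{clKchar} and the fact that $EX$ is Tychonoff; since Theorem~\ref{EXchar} already records both forms as equal to $c(A)$, I can cite it wholesale and need not re-derive the $\scr{K}$-closure identity.

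Thus the proof I would write is a two-line citation: apply Theorem~\ref{EXchar} for the equality and Proposition~\ref{properties}(c) for the two inclusions, noting that $k$ being closed guarantees the objects are well-behaved. The remark preceding the statement already flags that this corollary is merely a sharpening of Proposition~\ref{properties}(c) that exhibits the intermediate set $c(A)$ as the $k$-image of the $EX$-closure of $A'$, so the expected proof is deliberately short.
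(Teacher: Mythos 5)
Your proposal is correct and matches the paper exactly: the corollary is stated there as an immediate consequence of Theorem~\ref{EXchar} (for the equality $c(A)=k[cl_{EX}A']$) and Proposition~\ref{properties}(c) (for the two inclusions), with no further argument given. Your two-line citation is precisely the intended proof.
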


Our next corollary to Theorem~\ref{EXchar} demonstrates that the $c-$closure of a subset of an H-closed space is both Kat\v etov and an H-set. This result should be compared with Lemma~\ref{lemma2} which gives different conditions under which a subset of an H-closed space is an H-set and the result from \cite{JJ} that the $\theta-$closure of a subset of an H-closed space is an H-set.

\begin{corollary}
If $X$ is H-closed and $A\sse X$, then $c(A)$ is Kat\v etov and an H-set.
\end{corollary}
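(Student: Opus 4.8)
The plan is to combine the characterization of $c(A)$ via the absolute from Theorem~\ref{EXchar} with the topological facts about $EX$ and the map $k$ collected in Proposition~\ref{ex}. The key observation is that $c(A) = k[cl_{EX}A']$, so the statement will follow once I identify a \emph{compact} subset $B$ of $EX$ with $k[B] = c(A)$ and then invoke Proposition~\ref{ex}(e), which says that $k[B]$ is both Kat\v etov and an H-set whenever $B \subseteq EX$ is compact. The natural candidate is $B = cl_{EX}A'$.

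First I would argue that $cl_{EX}A'$ is compact. Here is where the hypothesis that $X$ is H-closed enters. Since $k: EX \to X$ is a perfect surjection (Proposition~\ref{ex}(a)), and $X$ is H-closed, I expect $EX$ itself to be H-closed; but $EX$ is also Tychonoff and extremally disconnected (as noted in the construction), and a Tychonoff H-closed space is compact. Hence $EX$ is compact, and therefore its closed subset $cl_{EX}A'$ is compact as well. This is the crux of the argument and the step I expect to be the main obstacle: I must verify carefully that H-closedness of $X$ transfers to $EX$ through the perfect map $k$, so that compactness of $EX$ (and hence of the closed set $cl_{EX}A'$) is legitimate. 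The cleanest route is that perfect preimages of H-closed spaces are H-closed, combined with the fact that $EX$ is Tychonoff, forcing compactness.

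With $B = cl_{EX}A'$ compact in hand, I would apply Proposition~\ref{ex}(e) directly: $k[B] = k[cl_{EX}A']$ is Kat\v etov and an H-set in $X$. Finally, Theorem~\ref{EXchar} gives $c(A) = k[cl_{EX}A']$, so $c(A)$ itself is Kat\v etov and an H-set, which is exactly the assertion. The proof is therefore very short modulo the compactness step, since all the substantive content has been front-loaded into Theorem~\ref{EXchar} and Proposition~\ref{ex}(e).

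If the transfer of H-closedness to $EX$ proves awkward to state cleanly, an alternative is to argue compactness of $cl_{EX}A'$ more locally: cover it by basic clopen sets $0U$ and use irreducibility and $\theta$-continuity of $k$ together with the H-closedness of $X$ to extract a finite subcover, exploiting that $EX$ is extremally disconnected (so closures of open sets are clopen and finite unions behave well via Proposition~\ref{ex}(f)). Either way, the single nontrivial ingredient is showing $cl_{EX}A'$ is compact; everything else is a citation of Proposition~\ref{ex}(e) and Theorem~\ref{EXchar}.
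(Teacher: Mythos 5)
Your argument is correct and follows essentially the same route as the paper: both reduce the claim to $c(A)=k[cl_{EX}A']$ via Theorem~\ref{EXchar}, observe that $cl_{EX}A'$ is compact because $EX$ is compact when $X$ is H-closed, and conclude by Proposition~\ref{ex}(e). The only cosmetic difference is that the paper simply cites \cite{PW} 6.9(b)(1) for the compactness of $EX$, whereas you re-derive that standard fact from perfectness of $k$ and the Tychonoff property of $EX$.
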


\begin{proof}
As $c(A)=k[cl_{EX}A^\prime]$ by Theorem~\ref{EXchar}  and since the absolute $EX$ is compact when $X$ is H-closed (\cite{PW} 6.9(b)(1)), we have that $cl_{EX}A^\prime$ is compact and  $c(A)$ is Kat\v etov and  an H-set by~\ref{ex}(e).
\end{proof}

\vskip 3mm
Another consequence of Theorem~\ref{EXchar} are the following properties of the c-closure operator and a new characterization of H-closed spaces.

\vskip 3mm 

\begin{proposition}\label{c-prop} Let $X$ be a space and $A, B$ subsets of $X$.
\newline (a) If $A \subseteq B$, then $c(A) \subseteq c(B)$.
\newline (b) $c(A \cap B) \subseteq c(A) \cap c(B)$.
\newline (c) $c(A \cup B) = c(A) \cup c(B)$.
\end{proposition}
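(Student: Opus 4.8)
The plan is to deduce all three parts from the absolute characterization $c(A)=k[cl_{EX}A^\prime]$ established in Theorem~\ref{EXchar}, since closure and image interact predictably with the Boolean operations. The first thing to record is that the priming operation $A\mapsto A^\prime=b^\prime[A]$ is well-behaved: because $b^\prime$ is injective it restricts to a bijection onto $X^\prime$, so $A\subseteq B$ implies $A^\prime\subseteq B^\prime$, and moreover $(A\cap B)^\prime=A^\prime\cap B^\prime$ and $(A\cup B)^\prime=A^\prime\cup B^\prime$. With this in hand each statement reduces to a routine fact about $cl_{EX}$ and about images under $k$.

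For (a), if $A\subseteq B$ then $A^\prime\subseteq B^\prime$, hence $cl_{EX}A^\prime\subseteq cl_{EX}B^\prime$ and therefore $c(A)=k[cl_{EX}A^\prime]\subseteq k[cl_{EX}B^\prime]=c(B)$; this simply re-records Proposition~\ref{properties}(b). Part (b) is then immediate from monotonicity: since $A\cap B\subseteq A$ and $A\cap B\subseteq B$, part (a) gives $c(A\cap B)\subseteq c(A)$ and $c(A\cap B)\subseteq c(B)$, so $c(A\cap B)\subseteq c(A)\cap c(B)$. One cannot expect equality here, exactly as for the ordinary closure operator.

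The content is in (c). The inclusion $c(A)\cup c(B)\subseteq c(A\cup B)$ again follows from (a) applied to $A\subseteq A\cup B$ and $B\subseteq A\cup B$. For the reverse inclusion I would use $(A\cup B)^\prime=A^\prime\cup B^\prime$, then the standard topological identity that Kuratowski closure commutes with finite unions in $EX$, namely $cl_{EX}(A^\prime\cup B^\prime)=cl_{EX}A^\prime\cup cl_{EX}B^\prime$, and finally that images distribute over unions, $k[S\cup T]=k[S]\cup k[T]$. Chaining these gives $c(A\cup B)=k[cl_{EX}(A^\prime\cup B^\prime)]=k[cl_{EX}A^\prime\cup cl_{EX}B^\prime]=k[cl_{EX}A^\prime]\cup k[cl_{EX}B^\prime]=c(A)\cup c(B)$, which yields the equality outright.

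The only step that is not purely formal is the reverse inclusion in (c), and the reason passing through $EX$ makes it painless is precisely that it reduces the claim to the finite additivity of the genuine closure operator on the space $EX$. Should one prefer to avoid the absolute, the same inclusion can be obtained directly from the definition of $c$: given $x\notin c(A)\cup c(B)$, choose open $U\ni x$ with $\widehat{U}\cap A=\varnothing$ and open $V\ni x$ with $\widehat{V}\cap B=\varnothing$, and set $W=U\cap V$; then $\widehat{W}=\widehat{U}\cap\widehat{V}$ by Proposition~\ref{hatu}(c), whence $\widehat{W}\cap(A\cup B)=\varnothing$ and $x\notin c(A\cup B)$. I expect the $EX$ route to be the cleanest to write up.
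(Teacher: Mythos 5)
Your proposal is correct and follows essentially the same route as the paper: (a) from the characterization $c(A)=k[cl_{EX}A^\prime]$ of Theorem~\ref{EXchar}, (b) from (a), and (c) via the identical chain $k[cl_{EX}(A\cup B)^\prime]=k[cl_{EX}A^\prime\cup cl_{EX}B^\prime]=c(A)\cup c(B)$. The additional direct argument for (c) via $\widehat{U\cap V}=\widehat{U}\cap\widehat{V}$ is a valid alternative but not needed.
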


\begin{proof} (a) is immediate from \ref{EXchar}, and (b) follows from (a).  For (c), note that $c(A \cup B)  = k[cl_{EX}(A \cup B)'] = k[cl_{EX}(A' \cup B')' = k[cl_{EX}(A') \cup cl_{EX}(B')] = k[cl_{EX}(A')] \cup k[cl_{EX}(B')] = c(A) \cup c(B)$. 
\end{proof}
\vskip 3mm
\begin{definition} Let $\scr F$ be a filter base on a space $X$.  We define the c-adherence of $\scr F$, denoted as $a_c(\scr F)$, as $\cap\{c(F):F 
\in \scr F\}$.  By \ref{properties}(c), it follows that $a(\scr F) \subseteq a_c(\scr F) \subseteq a_{\theta}(\scr F)$.
\end{definition}

We will use the concept of c-adherence to obtain a new characterization of H-closed spaces in the next result. 

\begin{theorem}\label{H-cldChar} Let $X$ be a space.  Then  $X$ is H-closed iff
for every filter base $\scr F$ on $X$, $a_c(\scr F) \ne \varnothing$.
\end{theorem}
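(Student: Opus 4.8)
The plan is to prove both directions of the characterization $X$ is H-closed $\iff$ every filter base $\scr{F}$ on $X$ has nonempty $c$-adherence $a_c(\scr{F})$, working through the absolute $EX$ and the map $k:EX\to X$ as the central tool. The guiding principle is that $c$-adherence should correspond, under $k$, to ordinary adherence of an associated filter base upstairs in $EX$, and that H-closedness of $X$ is equivalent to compactness of $EX$ (\cite{PW} 6.9(b)(1)), where compactness is exactly the statement that every filter base has nonempty adherence.

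For the forward direction, suppose $X$ is H-closed, so $EX$ is compact. Given a filter base $\scr{F}$ on $X$, I would pull it back to a filter base on $EX$. The natural candidate is $\{cl_{EX}F' : F\in\scr{F}\}$ (or the filter base generated by $\{F' : F\in\scr{F}\}$ where $F'=b'[F]$), which is a filter base since $k$ restricted to a section is a bijection and finite intersections are preserved. By compactness of $EX$, this filter base has nonempty adherence, i.e.\ $\bigcap_{F\in\scr{F}} cl_{EX}F' \neq\varnothing$. Applying $k$ and using that $c(F)=k[cl_{EX}F']$ by Theorem~\ref{EXchar}, I would deduce $\varnothing\neq k\bigl[\bigcap_F cl_{EX}F'\bigr]\sse\bigcap_F k[cl_{EX}F']=\bigcap_F c(F)=a_c(\scr{F})$. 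The one point requiring care is that $k$ of an intersection is contained in the intersection of the images, which is the correct direction here, so this gives $a_c(\scr{F})\neq\varnothing$ directly.

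For the converse, I would argue the contrapositive: if $X$ is not H-closed, then $EX$ is not compact, so there is a filter base $\scr{G}$ on $EX$ with empty adherence $\bigcap_{G\in\scr{G}} cl_{EX}G=\varnothing$. I would then push $\scr{G}$ down via $k$ to produce a filter base $\scr{F}=\{k[G] : G\in\scr{G}\}$ on $X$ (using that $k$ is surjective so images are nonempty, and that $k$ of a finite intersection down below is suitably related to intersections of the $k[G]$). The goal is to show $a_c(\scr{F})=\varnothing$; using $c(k[G])=k[cl_{EX}(k[G])']$ and the characterization $cl_{\scr K}$ from Lemma~\ref{clKchar}, together with the fact that $k^{\leftarrow}[k[cl_{EX}G]]=cl_{\scr K}G$, one relates $\bigcap_G c(k[G])$ back to the image of $\bigcap_G cl_{EX}G=\varnothing$. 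Alternatively, and perhaps more cleanly, I would phrase non-H-closedness directly as an open cover $\scr{V}$ of $X$ with no finite subfamily whose $c$-closures (equivalently closures, via Proposition~\ref{properties}(d)) cover, and build $\scr{F}$ from complements of finite unions $X\minus c(\Un\scr{W})$, $\scr{W}\in[\scr{V}]^{<\omega}$.

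\textbf{The main obstacle} I anticipate is the converse direction, specifically matching the filter base downstairs with the $c$-closure operator so that empty adherence of $\scr{G}$ in $EX$ translates to empty $c$-adherence of $\scr{F}$ in $X$. The subtlety is that $c(A)=k[cl_{EX}A']$ uses the \emph{section} image $A'=b'[A]$ rather than the full preimage $k^{\leftarrow}[A]$, whereas the natural pullback/pushforward between $X$ and $EX$ may not respect sections; reconciling this via Lemma~\ref{clKchar} (which equates $k[cl_{EX}A']$ with $k[cl_{\scr K}A']=k[k^{\leftarrow}[k[cl_{EX}A']]]$) is where the genuine work lies. For this reason I expect the complement-based formulation (an open cover with no finite $c$-closure-subcover yields a filter base of sets with empty $c$-adherence) to be the more robust route, and I would lead with it if the pushforward bookkeeping becomes delicate.
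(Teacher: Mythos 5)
Your forward direction (H-closed implies every filter base has nonempty $c$-adherence) is exactly the paper's argument: form $\{cl_{EX}F':F\in\scr F\}$, note these are compact since $EX$ is compact when $X$ is H-closed, take a point in the intersection, and push it down through $k$ using $c(F)=k[cl_{EX}F']$ from Theorem~\ref{EXchar}. For the converse the paper is far more economical than you are: since $c(F)\subseteq cl_\theta(F)$ (Proposition~\ref{properties}(c)), one has $a_c(\scr F)\subseteq a_\theta(\scr F)$ for every filter base, so nonempty $c$-adherence everywhere gives nonempty $\theta$-adherence everywhere, which is the standard filter characterization of H-closedness; no cover argument and no trip through $EX$ is needed.

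Your preferred first route for the converse --- pushing a filter base $\scr G$ with empty adherence on a non-compact $EX$ down to $\scr F=\{k[G]:G\in\scr G\}$ --- does not work as stated, for precisely the reason you flagged: $c(k[G])=k[cl_{EX}(b^\prime[k[G]])]$ is computed from the \emph{section} image of $k[G]$, not from $G$, and nothing prevents $k[G]=X$ for every $G\in\scr G$ (for instance a decreasing chain of closed subsets of $EX$ each mapping onto $X$ under the irreducible surjection $k$), in which case $\bigcap_G c(k[G])=c(X)=X\neq\varnothing$ even though $\bigcap_G cl_{EX}G=\varnothing$. Your fallback, however, is sound: if $X$ is not H-closed, take an open cover $\scr V$ no finite subfamily of which has closures covering $X$; the sets $X\setminus\bigcup_{W\in\scr W}clW$ for $\scr W\in[\scr V]^{<\omega}$ form a filter base, and for any $x\in X$ pick $V\in\scr V$ with $x\in V$, so that $c(X\setminus clV)=cl(X\setminus clV)\subseteq X\setminus V$ by Proposition~\ref{properties}(d) and $x$ fails to lie in the $c$-adherence. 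So the net assessment is: one direction identical to the paper, the other correct only via your alternative route, which is longer but more self-contained than the paper's one-line appeal to the $\theta$-adherence characterization.
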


\begin{proof}  Let $\mathcal F$ be a filter base on $X$. To show $X$ is H-closed, it suffices to show that $a_{\theta}(\mathcal F) \ne \varnothing$.  But $c(F) \subseteq cl_{\theta}(F)$ for each $F \in \mathcal F$ and $a_c(\mathcal F) \ne  \varnothing$. Thus, $a_{\theta}(\mathcal F) \ne \varnothing$.  Conversely suppose $X$ is H-closed.  Let $\mathcal F$ be a filter base on $X$. Then $\{cl_{EX}F':F \in \mathcal F \}$ is a filter base of compact subsets on $EX$.  Thus, there is $p \in \cap \{cl_{EX}F':F \in \mathcal F \}$.  It follows that $k(p) \in a_c(\mathcal F)$.
\end{proof}

In the next section we will develop further connections between the H-closed property, the operator $c$, and the set $\widehat{U}$ for an open set $U\sse X$.

\section{H-closed spaces.}

For a space $X$ we define $L^\prime(X)$, a cardinal invariant related to $aL^\prime(X)$, and show in Proposition~\ref{Lhered} that it is hereditary on $c$-closed subsets. 
The filter characterization of H-closed spaces used in Theorem~\ref{H-cldChar} using $c$-adherence of a filter in conjunction with a variation of a method used by Hodel ~\cite{Hodel2006} for nets provides a direct path for proving that the cardinality of an H-closed space $X$ is bounded by $2^{\psi_c(X)}$.

\begin{definition}\label{lprime}
For a subset $A\sse X$, define $L^\prime(A,X)$ as the least cardinal $\kappa$ such that for every cover $\scr{V}$ of $A$ by sets open in $X$ there exists $\scr{W}\in[V]^{\leq\kappa}$ such that $A\sse\Un_{W\in\scr{W}}\widehat{W}$. Set $L^\prime(X)=L^\prime(X,X)$. 
\end{definition}

\begin{proposition}\label{Lhered}
Let $X$ be a space. If $A\sse X$ is $c$-closed, then $L^\prime(A,X)\leq L^\prime(X)$.
\end{proposition}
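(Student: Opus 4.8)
The plan is to turn a cover of $A$ into a cover of all of $X$ by exploiting the $c$-closedness of $A$, apply the definition of $L^\prime(X)$ to the enlarged cover, and then discard the auxiliary sets that were added. First I would fix an arbitrary cover $\scr{V}$ of $A$ by sets open in $X$; the goal is to produce $\scr{W}\in[\scr{V}]^{\leq L^\prime(X)}$ with $A\sse\Un_{W\in\scr{W}}\widehat{W}$, which is exactly what is needed to witness $L^\prime(A,X)\leq L^\prime(X)$.

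The key step uses the hypothesis $c(A)=A$. For each $x\in X\minus A=X\minus c(A)$, the defining condition of $c$ fails at $x$, so there is an open set $U_x\ni x$ with $\widehat{U_x}\meet A=\es$. Setting $\scr{U}=\{U_x:x\in X\minus A\}$, the family $\scr{V}^\prime=\scr{V}\un\scr{U}$ is then an open cover of all of $X$: the sets of $\scr{V}$ cover $A$, and the sets $U_x$ cover $X\minus A$.

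Applying the definition of $L^\prime(X)=L^\prime(X,X)$ to $\scr{V}^\prime$ yields $\scr{W}^\prime\in[\scr{V}^\prime]^{\leq L^\prime(X)}$ with $X=\Un_{W\in\scr{W}^\prime}\widehat{W}$. I would then set $\scr{W}=\scr{W}^\prime\meet\scr{V}$, which lies in $[\scr{V}]^{\leq L^\prime(X)}$. To see $A\sse\Un_{W\in\scr{W}}\widehat{W}$, take $a\in A$ and choose $W\in\scr{W}^\prime$ with $a\in\widehat{W}$. Since $\widehat{U_x}\meet A=\es$ for every $x$, no member of $\scr{U}$ has $a$ in its hat, so $W\notin\scr{U}$; as $\scr{W}^\prime\sse\scr{V}\un\scr{U}$, this forces $W\in\scr{V}$, whence $W\in\scr{W}$. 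This produces the desired subfamily and the inequality.

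There is no serious obstacle here: the argument is the hat-operator analogue of the textbook proof that $L(A,X)\leq L(X)$ for closed $A$, with the closure operator replaced by $\widehat{\,\cdot\,}$ and ``closed'' replaced by ``$c$-closed''. The only point requiring care is making sure the auxiliary sets $U_x$ can be isolated and then discarded cleanly, and this is precisely the content of the equality $c(A)=A$; mere closedness of $A$ (which by Proposition~\ref{properties}(f) is only a weaker consequence) would supply open sets missing $A$ but not open sets whose \emph{hats} miss $A$, so the $c$-closed hypothesis is genuinely what drives the proof.
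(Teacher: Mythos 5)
Your proof is correct and follows essentially the same route as the paper's: use $c$-closedness to choose, for each $x\notin A$, an open $U_x$ with $\widehat{U_x}\meet A=\es$, enlarge the given cover of $A$ to a cover of $X$, extract a subfamily of size at most $L^\prime(X)$ whose hats cover $X$, and observe that the auxiliary sets contribute nothing over $A$ and may be discarded. No gaps.
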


\begin{proof}
Let $\kappa=L^\prime(X)$ and let $\scr{V}$ be a cover of $A$ by sets open in $X$. As $A$ is $c$-closed, for all $x\in X\minus A$, there exists an open set $W_x$ containing $x$ such that $a\notin\widehat{W_x}$ for all $a\in A$. Let $\scr{W}=\{W_x:x\in X\minus A\}$. Then $\scr{W}\un\scr{V}$ is an open cover of $X$. As $L^\prime(X)=\kappa$, there exists $\scr{W}^\prime\in[\scr{W}]^{\leq\kappa}$ and $\scr{V}^\prime\in[\scr{V}]^{\leq\kappa}$ such that
$$X=\Un_{W\in\scr{W}^\prime}\widehat{W}\un\Un_{V\in\scr{V}^\prime}\widehat{V}.$$
Suppose there exists $a\in A\meet\Un_{W\in\scr{W}^\prime}\widehat{W}$. Then there exists $W\in\scr{W}^\prime$ such that $a\in\widehat{W}$, a contradiction.  Thus $A\meet\Un_{W\in\scr{W}^\prime}\widehat{W}=\es$ and $A\sse\Un_{V\in\scr{V}^\prime}\widehat{V}$. This shows $L^\prime(A,X)\leq\kappa$.
\end{proof}

\begin{corollary}\label{aLL}
For any space $X$, $aL^\prime(X)\leq L^\prime(X)\leq L(X)$.
\end{corollary}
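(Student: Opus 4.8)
The plan is to establish the two inequalities $aL^\prime(X)\leq L^\prime(X)$ and $L^\prime(X)\leq L(X)$ separately, both as essentially immediate consequences of the definitions together with Proposition~\ref{Lhered} and Proposition~\ref{hatu}(a).

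For the first inequality, I would proceed as follows. By Definition~\ref{alprime}, $aL^\prime(X)=aL^{b^\prime}(X)=\sup\{aL(C,X):C\textup{ is }c\textup{-closed}\}+\aleph_0$, where $c=c_{b^\prime}$. So fix a $c$-closed set $C\sse X$; it suffices to bound $aL(C,X)$ by $L^\prime(X)$. Let $\scr{V}$ be a cover of $C$ by sets open in $X$. Since $C$ is $c$-closed, Proposition~\ref{Lhered} applies and gives $\scr{W}\in[\scr{V}]^{\leq L^\prime(X)}$ with $C\sse\Un_{W\in\scr{W}}\widehat{W}$. By Proposition~\ref{hatu}(a) we have $\widehat{W}\sse clW$ for each $W$, hence $C\sse\Un_{W\in\scr{W}}clW$. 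This shows $aL(C,X)\leq L^\prime(X)$, and since $L^\prime(X)\geq\aleph_0$ is infinite, taking the supremum over all $c$-closed $C$ yields $aL^\prime(X)\leq L^\prime(X)$.

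For the second inequality, I would compare $L^\prime(X)$ directly against $L(X)$ using the definitions. Given any open cover $\scr{V}$ of $X$, the definition of the Lindel\"of degree $L(X)$ provides $\scr{W}\in[\scr{V}]^{\leq L(X)}$ with $X=\Un_{W\in\scr{W}}W$. Again invoking $W\sse\widehat{W}$ from Proposition~\ref{hatu}(a), we get $X=\Un_{W\in\scr{W}}\widehat{W}$, so the subfamily $\scr{W}$ witnesses the defining property for $L^\prime(X)$ with $|\scr{W}|\leq L(X)$. Hence $L^\prime(X)\leq L(X)$.

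I do not anticipate any genuine obstacle here; the corollary is bookkeeping built on the substantive content already proved in Proposition~\ref{Lhered}. The only point requiring a little care is ensuring that the subset $C$ in the first inequality is genuinely $c$-closed so that Proposition~\ref{Lhered} is applicable — this is exactly how the supremum in $aL^\prime(X)$ is indexed — and that the containment $\widehat{W}\sse clW$ (needed to pass from covers by $\widehat{W}$ to covers by closures) is correctly cited from Proposition~\ref{hatu}(a). Both are routine, so the proof is short.
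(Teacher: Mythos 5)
Your proof is correct and follows essentially the same route as the paper: the first inequality via Proposition~\ref{Lhered} together with $\widehat{W}\sse clW$ from Proposition~\ref{hatu}(a), and the second via $W\sse\widehat{W}$ applied to a subfamily witnessing $L(X)$. No issues.
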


\begin{proof}
To show the first inequality, let $\kappa=L^\prime(X)$, let $A$ be a $c$-closed subset of $X$, and let $\scr{V}$ be a cover of $A$ by sets open in $X$. As $L^\prime(A,X)\leq\kappa$ by Propostion~\ref{Lhered}, there exists $\scr{W}\in[\scr{V}]^{\leq\kappa}$ such that $A\sse\Un_{W\in\scr{W}}\widehat{W}$. By Proposition~\ref{hatu}(a), we see that $A\sse\Un_{W\in\scr{W}}\widehat{W}\sse\Un_{W\in\scr{W}}clW$. This shows $aL^\prime(X)\leq\kappa$. To see that $L^\prime(X)\leq L(X)$, just observe again by Proposition~\ref{hatu}a that $V\sse\widehat{V}$ for every member $V$ of an open cover of $X$.
\end{proof}
In addition to Theorem~\ref{H-cldChar}, we obtain another new characterization of H-closed spaces.

\begin{theorem}\label{HclosedChar}
A space $X$ is H-closed if and only if for every open cover $\scr{V}$ of $X$ there exists a finite family $\scr{W}\sse\scr{V}$ such that $X=\Un_{W\in\scr{W}}\widehat{W}$.
\end{theorem}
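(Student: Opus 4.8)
The converse is immediate, so I would dispose of it first. Suppose that every open cover $\scr{V}$ of $X$ admits a finite $\scr{W}\sse\scr{V}$ with $X=\Un_{W\in\scr{W}}\widehat{W}$. Since $\widehat{W}\sse clW$ for each open $W$ by Proposition~\ref{hatu}(a), such a $\scr{W}$ satisfies $X=\Un_{W\in\scr{W}}\widehat{W}\sse\Un_{W\in\scr{W}}clW\sse X$, so that $X=\Un_{W\in\scr{W}}clW$. This is exactly the definition of H-closedness, and in fact shows that the stated condition is formally stronger than the usual definition.

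For the forward implication the plan is to transfer the cover up to the absolute $EX$, use its compactness, and then pull the resulting finite subcover back down through the section. Let $\scr{V}$ be an open cover of $X$ and, for each $V\in\scr{V}$, consider the basic open set $0V\sse EX$. The first step is to verify that $\{0V:V\in\scr{V}\}$ covers $EX$. Given $\scr{U}\in EX$ with convergent point $p=k(\scr{U})$, choose $V\in\scr{V}$ with $p\in V$; since $V$ is open we have $p\in V\sse int(cl(V))$, so Proposition~\ref{ex}(d) yields $k^{\leftarrow}(p)\sse 0V$, and in particular $\scr{U}\in 0V$. Thus the sets $0V$ form an open cover of $EX$.

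Now I would invoke the fact that $EX$ is compact when $X$ is H-closed (\cite{PW} 6.9(b)(1), as already used earlier). Compactness produces a finite $\scr{W}\sse\scr{V}$ with $EX=\Un_{W\in\scr{W}}0W$. To conclude, pull this back through the fixed section $b^\prime$: for any $x\in X$ the open ultrafilter $\scr{U}_x=b^\prime(x)$ lies in $EX$, hence in some $0W$, which means $W\in\scr{U}_x$, and by Definition~\ref{hatdef} this is precisely $x\in\widehat{W}$. Therefore $X=\Un_{W\in\scr{W}}\widehat{W}$ with $\scr{W}$ finite, as required. The one step that carries the real content is the coverage claim for $EX$: it is where the openness of the cover members $V$ (giving $V\sse int(cl(V))$) must be combined with Proposition~\ref{ex}(d) to guarantee that every point of $EX$ is captured by some $0V$. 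Once that is secured, compactness of $EX$ and the membership description of $\widehat{W}$ via the ultrafilters $\scr{U}_x$ make the remainder routine.
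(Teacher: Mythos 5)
Your proof is correct, but it takes a genuinely different route from the paper's. The paper argues entirely downstairs in $X$: it first extracts, from the definition of H-closedness, a finite $\scr{W}\sse\scr{V}$ with $X=\Un_{W\in\scr{W}}clW$, and then shows this \emph{same} $\scr{W}$ already satisfies $X=\Un_{W\in\scr{W}}\widehat{W}$ -- if some $x$ were missed, Proposition~\ref{hatu}(d) would put every $X\minus clW$ into the open ultrafilter $\scr{U}_x$, whence $\es=\Meet_{W\in\scr{W}}(X\minus clW)\in\scr{U}_x$, a contradiction with the finite intersection property. You instead lift the cover to the absolute, use the (nontrivial, but standard and already cited elsewhere in the paper) fact that $EX$ is compact exactly when $X$ is H-closed, and pull a finite subcover $EX=\Un_{W\in\scr{W}}0W$ back through the section. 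Your coverage claim for $EX$ is fine, though it can be shortened: since $\scr{U}$ converges to $p$ and $p\in V$ with $V$ open, $V\in\scr{U}$ directly by the definition of convergence, with no need to route through $int(cl(V))$ and Proposition~\ref{ex}(d). What each approach buys: the paper's argument is more elementary and self-contained, needing only the definition of H-closed plus the fact that a filter cannot contain $\es$; yours makes the structural picture transparent -- the theorem is literally the statement that a finite subcover of the compact absolute descends through any section -- and yields the marginally stronger conclusion $EX=\Un_{W\in\scr{W}}0W$, so that the same finite $\scr{W}$ works for every section $b$ simultaneously (though the paper's choice of $\scr{W}$ also has this property, since it depends only on $X=\Un_{W\in\scr{W}}clW$).
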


\begin{proof}
Let $\scr{V}$ be an open cover of $X$ and suppose there exists a finite family $\scr{W}\sse\scr{V}$ such that $X=\Un_{W\in\scr{W}}\widehat{W}$. Then, by Proposition~\ref{hatu}(a), $X=\Un_{W\in\scr{W}}clW$, showing $X$ H-closed.

Suppose now that $X$ is H-closed and let $\scr{V}$ be an open cover of $X$. As $X$ is H-closed, there is a finite family $\scr{W}\in\scr{V}$ such that $X=\Un_{W\in\scr{W}}clW$. Suppose by way of contradiction that there exists $x\in X\minus(\Un_{W\in\scr{W}}\widehat{W})$. Then, by Proposition~\ref{hatu}(d), 
$$x\in X\minus(\Un_{W\in\scr{W}}\widehat{W})=\Meet_{W\in\scr{W}}(X\minus\widehat{W})=\Meet_{W\in\scr{W}}(\widehat{X\minus clW}).$$
Then, $X\minus clW$ is a member of the open ultrafilter $\scr{U}_x$ for all $W\in\scr{W}$. It follows by the finite intersection property that
$$\es=\Meet_{W\in\scr{W}}(X\minus clW)\in\scr{U}_x.$$
As this is a contradiction, we see $X=\Un_{W\in\scr{W}}\widehat{W}$.
\end{proof}
We have the following immediate corollary of Theorem~\ref{HclosedChar}.
\begin{corollary}\label{hclosed}
If $X$ is H-closed then $L^\prime(X)=\aleph_0$.
\end{corollary}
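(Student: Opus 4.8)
The plan is to derive Corollary~\ref{hclosed} directly from Theorem~\ref{HclosedChar} together with Definition~\ref{lprime}. Recall that $L^\prime(X)$ is the least cardinal $\kappa$ such that every open cover $\scr{V}$ of $X$ admits a subfamily $\scr{W}\in[\scr{V}]^{\leq\kappa}$ with $X=\Un_{W\in\scr{W}}\widehat{W}$. The entire content of the corollary is the observation that the characterization in Theorem~\ref{HclosedChar} furnishes, for each open cover, a \emph{finite} subfamily whose $\widehat{\phantom{W}}$-sets cover $X$, and a finite family is in particular a countable one.

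First I would fix an arbitrary open cover $\scr{V}$ of $X$. Since $X$ is H-closed by hypothesis, Theorem~\ref{HclosedChar} provides a finite $\scr{W}\sse\scr{V}$ with $X=\Un_{W\in\scr{W}}\widehat{W}$. As $\scr{W}$ is finite, $\scr{W}\in[\scr{V}]^{\leq\aleph_0}$, so $\scr{V}$ witnesses the defining condition for $L^\prime(X)$ at the level $\kappa=\aleph_0$. Since $\scr{V}$ was arbitrary, this shows $L^\prime(X)\leq\aleph_0$. Because $L^\prime(X)$ is by definition an infinite cardinal, the reverse inequality $\aleph_0\leq L^\prime(X)$ is automatic, and hence $L^\prime(X)=\aleph_0$.

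There is essentially no obstacle here: the corollary is immediate once Theorem~\ref{HclosedChar} is in hand, and the only thing to be careful about is the bookkeeping that ``finite'' implies ``of size $\leq\aleph_0$'' and that $L^\prime(X)$ is defined to be at least $\aleph_0$, so that one genuinely gets equality rather than merely an upper bound. The proof is therefore a one-line reduction to the already-established characterization of H-closedness in terms of the sets $\widehat{W}$.

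\begin{proof}
Let $\scr{V}$ be an open cover of $X$. Since $X$ is H-closed, Theorem~\ref{HclosedChar} yields a finite family $\scr{W}\sse\scr{V}$ with $X=\Un_{W\in\scr{W}}\widehat{W}$. As $\scr{W}$ is finite we have $\scr{W}\in[\scr{V}]^{\leq\aleph_0}$, so this choice of $\scr{W}$ witnesses the defining property of $L^\prime(X)$ at the level $\aleph_0$. Since $\scr{V}$ was an arbitrary open cover, it follows that $L^\prime(X)\leq\aleph_0$. As $L^\prime(X)$ is by definition an infinite cardinal, we conclude $L^\prime(X)=\aleph_0$.
\end{proof}
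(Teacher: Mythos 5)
Your proof is correct and matches the paper's treatment, which presents the corollary as an immediate consequence of Theorem~\ref{HclosedChar} with no further argument. The only point worth noting is that you correctly use the convention (stated in the introduction) that $L^\prime(X)$ is an infinite cardinal to upgrade the bound $L^\prime(X)\leq\aleph_0$ to equality.
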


For the space $X=\kappa\omega$ in Example~\ref{c(A)notA}, we note by~\ref{hclosed} that $L^\prime(X)=\aleph_0$. Yet $L(X)=2^\mathfrak{c}$. Furthermore, since $X_b=X(s)_b=\beta\omega$ for the section $X_b$ in that example, it follows that $L(X(s))=\aleph_0$.

\vskip 1mm

We now present an example of an H-closed space $X$ and a subset $A$ such that $cl_X(A) \ne c(A) \ne cl_{\theta}(A)$ showing that \ref{properties}(c) is the best general result.

\vskip 3mm

\begin{example}\label{Urys}
We use Urysohn's space  $\Bbb U$ defined in 1925 to show that the converse of Proposition~\ref{properties}(c)  is not true in the setting of H-closed spaces.  Let $\Bbb Z$
denote the set of all integers with the discrete topology and $\Bbb N$
denote the subspace of positive integers.
  For the set $\Bbb U  = \Bbb N \times
\Bbb Z \cup \{\pm \infty\}$, a subset $U\subseteq \Bbb U$  is defined to be
open  if $+\infty\in U$
(resp. $-\infty\in U$) implies for some
$k\in {\Bbb N}$, $\{(n,m):n\geq k, m\in {\Bbb N} \}\subseteq U$ (resp.
$\{({n},{-m}):n\geq k, m\in
{\Bbb N}\}\subseteq U)$ and if $(n,0) \in U$ implies for some $k\in {\Bbb
N}$, $\{({n},{\pm m}):m\geq k
\}\subseteq U)$. The space $\Bbb U$ is first countable,  minimal
Hausdorff (H-closed and semiregular) but is not compact as
$A = \{(n,0): n
\in {\Bbb N}\} $ is an infinite, closed discrete subset.  Let $k: E{\Bbb U} \rightarrow {\Bbb U}$  be the absolute map from the absolute  $E\Bbb U$ to $\Bbb U$. Let $\mathcal U \in k^{\leftarrow}(\infty)$ such that $\Bbb N \times \{2\} \in \mathcal U$; thus, $\mathcal U \rightarrow \infty$.  Let    $\mathcal V \in k^{\leftarrow}(-\infty)$ such that $\Bbb N \times \{-2\} \in \scr V$; thus, $\mathcal V \rightarrow -\infty$. For $n \in \Bbb N$, let $\mathcal U_n \in k^{\leftarrow}((n,0))$ such that $\{n\}\times \Bbb N \in \mathcal U_n$; thus, $\mathcal U_n \rightarrow (n,0)$.  
Define $b:\Bbb U \rightarrow E\Bbb U$ by $b(\infty) = \mathcal U, b(-\infty) = \mathcal V, b((n,0)) = {\mathcal U}_n,$ and for $(n,m) \in \Bbb N \times \Bbb Z \backslash \Bbb N \times\{0\}$, $b(n,m) = \{U \in \tau(\Bbb U): (n,m) \in U\}$.  It follows that $cl_{\Bbb U}(A) = A$ and $cl_{\theta}(A) = A \cup \{\pm \infty\}$. By \ref{properties}(c),   it follows $A \sse c(A) \sse A \cup \{\pm\infty\}$.  To show that $\infty \in c(A)$, for $n \in \Bbb N$, let $T_n = (\Bbb N \backslash \{1,2,\cdots,n\})\times \Bbb N$.  A basic open set containing $\infty$ is  $T_n \cup \{\infty\}$.  As $\{n+1\}\times \Bbb N \in \scr U_{n+1} = b(n+1,0)$, $T_n \in \scr U_{n+1}$ and $\hat{T_n} \cap A \ne \varnothing$.  A similar argument shows that $-\infty \not\in c(A)$.
Thus, $c(A) = A \cup \{\infty\}$ and this shows that $cl_X(A) \ne c(A) \ne cl_{\theta}(A)$.  Also, note that both 
 $c(A)$ and $cl_{\theta}(A)$ are H-sets.  \qed
\end{example}

\begin{definition}\label{kHcl}
Let $X$ be a space, $\ka$ an infinite cardinal, and $A \sse X$.
$A$ is $\kappa$-\emph{H-closed} if for each open (in $X$) cover $\mathcal C$ of $A$ such that $|{\mathcal C}| \leq \ka$, there is a finite subfamily $ {\mathcal D}\subseteq {\mathcal C}$ such that $A \subseteq \bigcup_{\mathcal D}cl_X(U \cap A)$).  
\end{definition}

We note that in particular, a Hausdorff space $X$ is $\omega$-H-closed iff $X$ is feebly compact. 

We prove the following lemmas. The key lemma is Lemma~\ref{lemma2}, which is of interest on its own.  

\begin{lemma}\label{lemma1}
Let $X$ be a space, $\ka$ an infinite cardinal, and $A \subseteq X$.  If for each filter base ${\mathcal F} \in [[A]^{\leq \ka}]^{\leq \ka}$, $a_c({\scr F}) \cap A \ne \varnothing$,  then $A$ is a  $\ka$-H-closed.
\end{lemma}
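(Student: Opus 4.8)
The plan is to prove the contrapositive: assuming $A$ is not $\ka$-H-closed, I will produce a filter base $\scr{F}\in\setlessequal{\setlessequal{A}{\ka}}{\ka}$ with $a_c(\scr{F})\meet A=\es$. By Definition~\ref{kHcl} there is an open cover $\scr{C}$ of $A$ with $\card{\scr{C}}\leq\ka$ such that for every finite $\scr{D}\sse\scr{C}$ the set $A\minus\Un_{U\in\scr{D}}cl(U\meet A)$ is nonempty. As $\ka$ is infinite and $\card{\scr{C}}\leq\ka$, the index set $\setless{\scr{C}}{\omega}$ has size $\leq\ka$; for each finite $\scr{D}$ pick a witness $x_{\scr{D}}\in A\minus\Un_{U\in\scr{D}}cl(U\meet A)$ and set $S=\{x_{\scr{D}}:\scr{D}\in\setless{\scr{C}}{\omega}\}\in\setlessequal{A}{\ka}$. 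For each finite $\scr{D}$ let $F_{\scr{D}}=\{x_{\scr{E}}:\scr{D}\sse\scr{E}\in\setless{\scr{C}}{\omega}\}$. Since $F_{\scr{D}_1}\meet F_{\scr{D}_2}\supseteq F_{\scr{D}_1\un\scr{D}_2}\ni x_{\scr{D}_1\un\scr{D}_2}$, the family $\scr{F}=\{F_{\scr{D}}:\scr{D}\in\setless{\scr{C}}{\omega}\}$ is a filter base lying in $\setlessequal{\setlessequal{A}{\ka}}{\ka}$, and every element of $F_{\scr{D}}$ avoids $\Un_{U\in\scr{D}}cl(U\meet A)$ (so in particular misses each $U\in\scr{D}$). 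Everything reduces to the claim $a_c(\scr{F})\meet A=\es$.

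For the claim, fix $p\in A$ and, using that $\scr{C}$ covers $A$, choose $U_0\in\scr{C}$ with $p\in U_0$; I must locate $F\in\scr{F}$ and an open $V\ni p$ with $\widehat{V}\meet F=\es$, i.e.\ $p\notin c(F)$. The naive choice $V=U_0$ does \emph{not} work: although each element of $F_{\{U_0\}}$ lies outside $cl(U_0\meet A)$, Proposition~\ref{hatu}(a) only yields $\widehat{U_0}\sse clU_0$, and $\widehat{U_0}\meet A$ may properly exceed $cl(U_0\meet A)$ — precisely the phenomenon in Examples~\ref{c(A)notA} and~\ref{Urys}, where boundary witnesses belong to $\widehat{U_0}$. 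Hence the separation must use a sufficiently small neighborhood of $p$, and the natural device is the absolute. By Theorem~\ref{EXchar}, $p\in c(F_{\scr{D}})$ iff $k^{\leftarrow}(p)\meet cl_{EX}F_{\scr{D}}^\prime\neq\es$; since $k$ is perfect (Proposition~\ref{ex}(a)) the fiber $k^{\leftarrow}(p)$ is compact, and the $cl_{EX}F_{\scr{D}}^\prime$ form a filter base of closed sets, so if $p\in a_c(\scr{F})\meet A$ then by the finite intersection property there is a single convergent open ultrafilter $q\in k^{\leftarrow}(p)\meet\Meet_{\scr{D}}cl_{EX}F_{\scr{D}}^\prime$.

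The main obstacle is to rule out $q$. Because $q$ converges to $p$ and $p\in U_0$, every open $V$ with $p\in V\sse U_0$ lies in $q$, so $0V$ is a basic neighborhood of $q$; as each $0V$ is clopen in the extremally disconnected, zero-dimensional space $EX$, we get $q\in cl_{EX}F_{\scr{D}}^\prime\meet 0V=cl_{EX}\bigl(F_{\scr{D}}^\prime\meet 0V\bigr)=cl_{EX}\bigl((F_{\scr{D}}\meet\widehat{V})^\prime\bigr)$ for every such $V$ and every $\scr{D}$, whence $F_{\scr{D}}\meet\widehat{V}\neq\es$ for all arbitrarily small $V\ni p$ and all finite $\scr{D}\supseteq\{U_0\}$. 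I expect to force a contradiction as follows: each resulting witness $x_{\scr{E}}\in\widehat{V}\sse clV$ lies in $A$, hence in some member of $\scr{C}$, yet avoids $cl(U\meet A)$ for all $U\in\scr{E}$; the aim is to refine $\scr{D}$ so that it absorbs a cover element actually containing the relevant witness, producing $x_{\scr{E}}\in U\meet A\sse cl(U\meet A)$ with $U\in\scr{E}$. Carrying out this refinement \emph{uniformly} as the neighborhoods $V$ shrink — equivalently, using Hausdorffness to separate $p$ from the boundary witnesses trapped in $\Meet_V clV$ and thereby eliminate $q$ — is the delicate step on which the lemma turns, and is the part I would spend the most care on.
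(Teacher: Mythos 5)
Your filter base is, after relabeling, exactly the paper's: the paper first closes the cover $\scr{C}$ under finite unions, so that a finite subfamily $\scr{D}$ is replaced by the single element $\cup\scr{D}$ and your $F_{\scr{D}}$ becomes its $B_V=\{p_U:V\sse U\in\scr{C}\}$; the cardinality bookkeeping is the same, and you correctly identify the genuine subtlety, namely that $\widehat{U_0}\meet A$ may strictly contain $cl(U_0\meet A)$, so that taking $V=U_0$ does not immediately exclude $p$ from $c(F_{\{U_0\}})$. But the proof stops there. The passage to the absolute only re-derives the hypothesis: producing $q\in k^{\leftarrow}(p)\meet\Meet_{\scr{D}}cl_{EX}F_{\scr{D}}^{\prime}$ and concluding $F_{\scr{D}}\meet\widehat{V}\neq\varnothing$ for all open $V\ni p$ is, via Theorem~\ref{EXchar}, literally the statement $p\in a_c(\scr{F})$ unpacked. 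The concluding ``refinement'' that is supposed to yield a witness $x_{\scr{E}}\in U\meet A$ with $U\in\scr{E}$ is described only as something you ``expect'' to work and is explicitly flagged as not carried out; nothing in the sketch forces it, since a witness $x_{\scr{E}}\in\widehat{V}\meet F_{\scr{D}}$ lies in $clV\meet A$ and in some member of $\scr{C}$, but that member need not belong to $\scr{E}$, and enlarging $\scr{D}$ to absorb it changes the witness. So the contradiction is never obtained; this is a real gap, not a routine detail.

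The idea you are missing is a global bound on $c$ of the tail set rather than a pointwise analysis of $\widehat{V}\meet F$ for shrinking $V$. In the paper, with $T\in\scr{C}$ (union-closed) containing $p$, every point of $B_T$ lies in $A\minus cl(T\meet A)$, so the whole set $B_T$ sits inside the single open set $X\minus cl(T\meet A)$; by monotonicity of $c$ (Proposition~\ref{c-prop}(a)) and the identity $c(W)=cl(W)$ for open $W$ (Proposition~\ref{properties}(d)), one gets $c(B_T)\sse cl(X\minus cl(T\meet A))=X\minus int(cl(T\meet A))$, and the paper concludes that this closed set misses $T$, contradicting $p\in T\meet c(B_T)$. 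That one computation is exactly the step your attempt needs and does not supply. (For completeness: the paper's final inclusion implicitly uses $T\sse int(cl(T\meet A))$, which is itself worth checking for a proper subset $A$; but even granting it, your argument never reaches that point.)
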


\begin{proof} Let $\scr C$ be an open cover of $A$ by sets open in $X$ and suppose that $\scr C$ is closed under finite unions and suppose $|{\scr C}| \leq \kappa$.  For each $V \in \scr C$, assume there is $p_V \in A\backslash cl(V\cap A)$.  Let $B_V = \{p_U: V \sse U \in \scr C\}$.  For $T, S \in \scr C$, $B_T \cap B_S \supseteq B_{T\cup S}$.  Then $\scr F = \{B_V: V \in \scr C\}$ is a filter base on $A$.  Thus, there is a point $p \in a_c(\scr F) \cap A$.  There is $T \in \scr C$ such that $p \in T$.  Now, $B_T \sse A\backslash cl(T\cap A) \sse X\backslash cl(T\cap A)$.  By Propositions~\ref{properties}(d) and \ref{c-prop}(a), using that $X\backslash cl(T\cap A)$ is open, we have that $ p \in c(B_T) \sse c(X\backslash cl(T)) = cl(X\backslash cl(T)) \sse X\backslash T$, a contradiction as $p \in T$.\end{proof}

The small filter base method presented in the above lemma stands in contrast to the open ultrafilter techniques frequently used in H-closed settings. 
An immediate consequence is this corollary.

\begin{corollary} Let $X$ be a space and $A \sse X$.  If for every filter base $\scr F$ on $A$, $a_c(\scr F) \cap A  \ne \varnothing$, then the subspace  $A$ is H-closed.
\end{corollary}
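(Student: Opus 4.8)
The plan is to deduce this corollary from Lemma~\ref{lemma1} by letting the cardinal $\ka$ range over all infinite cardinals, and then to translate the resulting $\ka$-H-closedness into the subspace H-closed property. The essential observation is that the present hypothesis is uniformly stronger than the hypothesis of Lemma~\ref{lemma1}.

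First I would note that for every infinite cardinal $\ka$, any filter base $\scr F\in[[A]^{\leq\ka}]^{\leq\ka}$ is in particular a filter base on $A$, so the assumption that $a_c(\scr F)\meet A\neq\es$ for \emph{every} filter base on $A$ certainly yields $a_c(\scr F)\meet A\neq\es$ for all such small $\scr F$. Lemma~\ref{lemma1} therefore applies for every infinite $\ka$, giving that $A$ is $\ka$-H-closed for every infinite cardinal $\ka$.

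Next, let $\scr C$ be an arbitrary cover of $A$ by sets open in $X$, and set $\ka=|\scr C|+\aleph_0$, so that $|\scr C|\leq\ka$. Since $A$ is $\ka$-H-closed, Definition~\ref{kHcl} furnishes a finite subfamily $\scr D\sse\scr C$ with $A\sse\Un_{U\in\scr D}cl_X(U\meet A)$. Because $cl_X(U\meet A)\meet A=cl_A(U\meet A)$, intersecting with $A$ gives $A\sse\Un_{U\in\scr D}cl_A(U\meet A)$. As every relatively open cover of the subspace $A$ has the form $\{U\meet A:U\in\scr C\}$ for some family $\scr C$ of sets open in $X$, and $\scr D$ is finite, this is precisely the statement that every relatively open cover of $A$ admits a finite subfamily whose relative closures cover $A$; that is, $A$ is H-closed in its subspace topology.

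The only point requiring care (and it is a routine verification rather than a genuine obstacle) is this last translation: one must confirm that the condition allowing, for covers of \emph{arbitrary} cardinality, a finite subfamily whose $X$-closures $cl_X(U\meet A)$ cover $A$ coincides with the standard definition of subspace H-closedness phrased via relative closures $cl_A(\cdot)$. No difficulty arises here, since $cl_A(U\meet A)=cl_X(U\meet A)\meet A$ identifies the two formulations, and Step~1 removes the cardinality restriction inherent in the $\ka$-H-closed notion by applying Lemma~\ref{lemma1} at every $\ka$.
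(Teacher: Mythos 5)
Your proof is correct and follows exactly the route the paper intends: the corollary is stated there as an immediate consequence of Lemma~\ref{lemma1}, and your argument---applying the lemma at every infinite cardinal $\ka$ (since the hypothesis on all filter bases on $A$ subsumes the hypothesis on small ones), then passing from $cl_X(U\meet A)$ to $cl_A(U\meet A)$ by intersecting with $A$---is precisely the routine verification being left to the reader. No gaps.
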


\begin{lemma}\label{lemma2}
Let $X$ be an H-closed space, $\ka$ an infinite cardinal, $A\sse X$, and $\psi_c(X) \leq \kappa$.  If  for every filter base $\scr F$ on $[[A^{\leq \kappa}]^{\leq \kappa}]$, $a_c(\scr F) \cap A  \ne \varnothing$.  Then $A$ is an H-set.
\end{lemma}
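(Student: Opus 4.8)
The plan is to prove that $A$ is an H-set, that is, to show that for every cover $\scr{V}$ of $A$ by sets open in $X$ there is a finite subfamily $\scr{W}\in[\scr{V}]^{<\omega}$ such that $A\sse\Un_{W\in\scr{W}}cl_XW$. The natural strategy mirrors the proof of Lemma~\ref{lemma1}: assume toward a contradiction that no finite subfamily has closures covering $A$, manufacture a suitable filter base whose $c$-adherence must meet $A$ by hypothesis, and derive a contradiction. The essential difference from Lemma~\ref{lemma1} is that here $\scr{V}$ is an \emph{arbitrary} open cover rather than one of size $\leq\kappa$, so I cannot directly index a filter base by the members of $\scr{V}$ and stay inside $[[A]^{\leq\kappa}]^{\leq\kappa}$. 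The hypothesis $\psi_c(X)\leq\kappa$ together with $X$ being H-closed is what must bridge this gap, and controlling the size of the filter base is exactly where I expect the main obstacle to lie.

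First I would reduce $\scr{V}$ to a manageable subcover. Since $X$ is H-closed, the cover $\scr{V}$ of $A$ (extended by $X\minus cl_\theta A$ or a similar open set to cover all of $X$, using that the $\theta$-closure of $A$ is an H-set by the result of~\cite{JJ} cited after Theorem~\ref{EXchar}) has a finite subfamily whose $\theta$-closures, or closures, cover the relevant H-set. The point of invoking $\psi_c(X)\leq\kappa$ is to extract, for each point $p\in A$, a family of at most $\kappa$ open sets from $\scr{V}$ that pins $p$ down via the $c$-closure operator: because $\psi_c(X)\leq\kappa$, each point is the intersection of the closures of $\kappa$-many open neighborhoods, which lets me select a subcollection $\scr{C}\sse\scr{V}$ and corresponding witnessing points forming sets in $[A]^{\leq\kappa}$. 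I would then assemble the filter base $\scr{F}$ as in Lemma~\ref{lemma1}, setting $B_V=\{p_U:V\sse U\in\scr{C}\}$ after first closing $\scr{C}$ under finite unions, and verify that $\scr{F}\in[[A]^{\leq\kappa}]^{\leq\kappa}$.

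Granting that the filter base lies in the prescribed class, the hypothesis yields a point $p\in a_c(\scr{F})\cap A$. From here the argument is identical in spirit to Lemma~\ref{lemma1}: there is some $T\in\scr{C}$ with $p\in T$, the inclusion $B_T\sse X\minus cl(T\cap A)$ holds, and applying Propositions~\ref{properties}(d) and~\ref{c-prop}(a) to the open set $X\minus cl(T\cap A)$ gives $p\in c(B_T)\sse cl(X\minus cl(T\cap A))\sse X\minus T$, contradicting $p\in T$. This shows that for a cover closed under finite unions some single member $T$ satisfies $A\sse cl_X(T\cap A)\sse cl_XT$, which unwinds to the required finite subcover property for the original $\scr{V}$.

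The hard part will be the bookkeeping that keeps the filter base inside $[[A]^{\leq\kappa}]^{\leq\kappa}$ while still capturing an arbitrary open cover $\scr{V}$. Because H-closedness only guarantees finite subfamilies with covering closures, and $\psi_c$ controls points by $\kappa$-many neighborhoods, I expect to need a closing-off or selection argument that repeatedly uses the $c$-operator to replace the full cover by a subcollection of size $\leq\kappa$ that is ``$c$-adequate'' for the points of $A$ — that is, whichever open sets of $\scr{V}$ actually witness the $c$-closure relationships among points of $A$. Once the reduction of $\scr{V}$ to such a $\kappa$-sized subfamily $\scr{C}$ is justified (this is where $\psi_c(X)\leq\kappa$ and the H-closedness of $cl_\theta A$ combine), the filter-base contradiction runs exactly as above, so the crux is entirely in establishing that a size-$\kappa$ subcover suffices and its associated filter base is admissible.
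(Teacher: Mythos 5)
There is a genuine gap, and it sits exactly where you predicted it would: the reduction of an arbitrary open cover of $A$ to something of size $\leq\kappa$ is never carried out, and the mechanism you sketch for it does not work. You propose to apply $\psi_c(X)\leq\kappa$ at \emph{each point of $A$} to select a ``$c$-adequate'' subfamily $\scr{C}\sse\scr{V}$; but this produces on the order of $|A|\cdot\kappa$ open sets and hence a filter base $\{B_V\}$ indexed by a family that need not have size $\leq\kappa$, so the hypothesis (which only applies to filter bases in $[[A]^{\leq\kappa}]^{\leq\kappa}$) cannot be invoked. Nothing in the hypotheses gives $L(A,X)\leq\kappa$ or even an almost-Lindel\"of-type bound for $A$, so there is no reason an arbitrary open cover of $A$ admits a $\kappa$-sized subfamily that still covers $A$ in any useful sense. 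The closing paragraph of your proposal acknowledges that this is ``the crux'' but defers it to an unspecified ``closing-off or selection argument,'' which is precisely the missing idea.

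The paper sidesteps this obstacle by abandoning the cover definition of H-set in favor of the filter characterization: it takes an open filter $\scr{G}$ meeting $A$, maximal with respect to meeting $A$, uses H-closedness of $X$ to get $p\in a(\scr{G})$, and aims to show $p\in A$. The hypothesis $\psi_c(X)\leq\kappa$ is applied at the \emph{single} point $p$ to get neighborhoods $\{V_\al:\al<\kappa\}$ with $\Meet_\al cl(V_\al)=\{p\}$; maximality of $\scr{G}$ converts each $V_\al$ into a filter element $G_\al$ with $G_\al\meet A\sse cl(V_\al)$; and then the $\kappa$-sized cover $\{X\minus cl(G_\al\meet A):\al<\kappa\}$ of $A$ is fed to the $\kappa$-H-closedness of $A$ (Lemma~\ref{lemma1}, which is where the filter-base hypothesis actually gets used) to conclude $A\meet\Meet_\al cl(G_\al\meet A)\neq\es$, forcing $p\in A$. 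The cardinality bookkeeping succeeds there only because the cover being shrunk is manufactured from the $\psi_c$-family at one point and therefore has size $\kappa$ by construction --- this is the idea your proposal lacks. If you want to salvage your plan, switch to proving the adherence formulation of ``H-set'' rather than the covering one.
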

\begin{proof}
Let $\scr G$ be an open filter that meets $A$.  We can assume that $\scr G$ is maximal with respect to meeting $A$.  As $X$ is H-closed, there is $p \in a(\scr G)$.  The goal is to show that $p \in A$.  Assume that $p \not\in A$. There is a family $\scr V = \{V_{\alpha}: \alpha < \kappa \}$ of open neighborhoods of $p$ such that $\cap_{\kappa}cl(V_{\alpha}) = \{p\}$. For each $V_{\al}$, as $p \not\in cl(X\backslash cl(V_{\al}))$, $X\backslash cl(V_{\al}) \not\in \scr G$.  There is some $G_{\al} \in \scr G$ such that $(X\backslash cl(V_{\al})) \cap G_{\al} \cap A = \varnothing$.  Thus, $G_{\al} \cap A \sse cl(V_{\al})$ and $cl(G_{\al} \cap A) \sse cl(V_{\al})$.  Assume, by way of contradiction, that $A \cap \bigcap cl(G_{\al} \cap A) = \varnothing.$  Thus, $\{X\backslash cl(G_{\al} \cap A): \al < \ka\}$ is open cover of $A$.  As $A$ is $\kappa$-H-closed by Lemma~\ref{lemma1}, there is a finite set $F \in [\ka]^{<\omega}$ such that $A \subseteq \bigcup_Fcl((X\backslash cl(G_{\al} \cap A))\cap A)$.  There is a $G \in \scr G$ such that $G \sse \bigcap_FG_{\al}$.  For $\al \in F$, $G \cap A \sse G_{\al}\cap A$ implying that $X\backslash cl(G_{\al}\cap A) \sse X\backslash cl(G \cap A )$ and  $(X\backslash cl(G_{\al}\cap A))\cap A \sse (X\backslash cl(G \cap A ))\cap A$.
Thus $A \sse cl((X\backslash cl(G \cap A))\cap A) = cl(A\backslash cl_A(G\cap A))$ implying that $A \sse  cl_A(A\backslash cl_A(G\cap A)) = A\backslash int_Acl_A(G\cap A) $,  a contradiction as $int_Acl_A(G\cap A)\cap A \ne \varnothing$.
\end{proof}

\begin{theorem}\label{kappafilter}
Let $X$ be H-closed,  $\ka$ an infinite cardinal, and $\psi_c(X) \leq \ka$.  Then $|X|\leq 2^{\ka}$.
\end{theorem}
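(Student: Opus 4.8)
The plan is to prove $|X| \leq 2^\kappa$ for an H-closed space $X$ with $\psi_c(X) \leq \kappa$ via a closing-off argument, using the $c$-closure operator and the $c$-adherence machinery developed in \S2 in place of the usual closure operator. The engine will be the filter characterization from Theorem~\ref{H-cldChar} (every filter base has nonempty $c$-adherence) together with Lemma~\ref{lemma2}, which tells us that a subset closed under the appropriate small-filter-base condition is an H-set.

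\medskip

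First I would recall that since $\psi_c(X) \leq \kappa$, every point $p \in X$ is the intersection of the closures of a family of $\leq\kappa$ open neighborhoods, i.e. there is $\scr{V}_p = \{V_\alpha : \alpha < \kappa\}$ with $\Meet_{\alpha<\kappa} cl(V_\alpha) = \{p\}$. The strategy is then the standard transfinite construction: build an increasing chain $\langle A_\eta : \eta < \kappa^+\rangle$ of subsets of $X$, each of size $\leq 2^\kappa$, where at each stage I close off under two operations — (i) adding the $c$-closure $c(A_\eta)$ (which stays inside $X$ and, by $t_c(X)\leq\chi(X)\leq\kappa$ via Proposition~\ref{tcx}, does not blow up cardinality beyond $2^\kappa$ when handled via the small-filter-base witnesses), and (ii) for every small filter base $\scr F \in [[A_\eta]^{\leq\kappa}]^{\leq\kappa}$ that fails to $c$-adhere inside the set being built, adding a witnessing point. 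Setting $A = \Un_{\eta<\kappa^+} A_\eta$, a counting argument using $|[[A_\eta]^{\leq\kappa}]^{\leq\kappa}| \leq (2^\kappa)^\kappa = 2^\kappa$ and the regularity of $\kappa^+$ gives $|A| \leq 2^\kappa$.

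\medskip

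Next I would argue that $A = X$. By construction $A$ is $c$-closed, hence closed (Proposition~\ref{properties}(f)), and it satisfies the hypothesis of Lemma~\ref{lemma2}: for every filter base $\scr F$ on $[[A^{\leq\kappa}]^{\leq\kappa}]$ we have $a_c(\scr F)\cap A \neq \es$, because any such $\scr F$ already appears at some stage $\eta < \kappa^+$ (by $\kappa^+$-regularity and the closing-off) and its $c$-adherence witness was added at a later stage. Lemma~\ref{lemma2} then forces $A$ to be an H-set. Now suppose toward a contradiction that some $p \in X \minus A$. I would use the neighborhood family $\scr{V}_p$ witnessing $\psi_c(X)\leq\kappa$ to build a filter base on $A$ (for instance, traces $\{cl(V_\alpha)\meet A\}$ or complements) whose $c$-adherence is forced to lie in $\{p\}$ yet must meet $A$ — exploiting that $A$ is an H-set so its relevant filter adherences are nonempty and land inside $A$, while $\Meet cl(V_\alpha) = \{p\}$ pins the only possible adherent point outside $A$. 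This contradiction yields $X \minus A = \es$, so $X = A$ and $|X| \leq 2^\kappa$.

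\medskip

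The main obstacle I expect is the bookkeeping at the closing-off step: ensuring that the set of small filter bases $[[A_\eta]^{\leq\kappa}]^{\leq\kappa}$ I must anticipate is itself of size $\leq 2^\kappa$, and that adjoining one witness per such filter base keeps each $A_\eta$ of size $\leq 2^\kappa$ while guaranteeing that \emph{every} filter base on the limit $A$ is captured at some earlier stage (this is where $\kappa^+$-regularity and $t_c(X)\leq\kappa$ are essential, so that a filter base indexed by $\leq\kappa$ sets over a $\leq\kappa$-sized piece of $A$ does not escape the chain). Invoking Hodel's set-theoretic closing-off template (Theorem~\ref{hodel}) would streamline this; the delicate point is simply matching the hypotheses of Lemma~\ref{lemma2} to the operations used in the closing-off so that the final set is simultaneously $c$-closed and an H-set.
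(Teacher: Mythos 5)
Your overall architecture (a $\kappa^+$-length closing-off whose limit satisfies the hypotheses of Lemmas~\ref{lemma1} and~\ref{lemma2} and is therefore an H-set) matches the paper's, but there is a genuine gap in the endgame. Being a $c$-closed H-set of size $\le 2^\kappa$ that satisfies the small-filter-base adherence condition is \emph{not} enough to force $A=X$: any finite subset of an H-closed space already has all of these properties (it is compact, hence an H-set; $c(F)=k[cl_{EX}F']=F$ for finite $F$; and every filter base on it has adherence meeting it), so no contradiction can be extracted from $p\in X\setminus A$ using only these facts. Your proposed mechanism --- a filter base built from the $\psi_c$-family $\scr{V}_p$ at the outside point $p$ --- does not repair this: the traces $cl(V_\alpha)\cap A$ need not have the finite intersection property (indeed, since $A$ is closed and $p\notin A$ one expects some finite intersection to miss $A$), and when they fail to form a filter base you obtain no contradiction, merely the unremarkable fact that some finite intersection of the $cl(V_\alpha)$ avoids $A$. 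What is missing is the cover-separation seeding that the paper builds into each stage: for every \emph{finite} $B\subseteq\bigcup_{\beta<\alpha}H_\beta$ and every $g:B\to\kappa$, a point of $X\setminus\bigcup_{x\in B}cl(V(g(x),x))$ (when nonempty) is added to $H_\alpha$. Then, given $q\notin H$, one uses $\psi_c(X)\le\kappa$ at each $x\in H$ to pick $\alpha_x$ with $q\notin cl(V(\alpha_x,x))$, uses the H-set property of $H$ to extract a \emph{finite} $B$ with $H\subseteq\bigcup_{x\in B}cl(V(\alpha_x,x))$, and derives the contradiction from the seeded witness lying in $H$ but outside that union. Note that the neighborhoods used are those of the points of $H$, not of $q$.

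Two secondary points. First, your step (i) of adjoining $c(A_\eta)$ at each stage is both unnecessary (Lemma~\ref{lemma2} does not require the set to be $c$-closed) and unjustified as stated: you invoke $t_c(X)\le\chi(X)\le\kappa$, but the theorem only assumes $\psi_c(X)\le\kappa$, and $\chi(X)$ (and $t_c(X)$, which equals $\mathfrak{c}$ for $\kappa\omega$) can exceed $\psi_c(X)$; the paper's construction deliberately avoids taking $c$-closures so that no tightness hypothesis is needed. Second, the bookkeeping you flag as the main obstacle is in fact routine ($|[[A_\eta]^{\le\kappa}]^{\le\kappa}|\le 2^\kappa$ and $|[A_\eta]^{<\omega}\times\kappa^{<\omega}|\le 2^\kappa$); the real difficulty is the one described above.
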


\begin{proof}
For each $x \in X$, let $\{V(\al, x) : \al \in \ka\}$ be a family of open sets containing $x$ such that $\bigcap_{\ka}cl(V(\al,x)) = \{x\}$. Let $L:{\mathcal P}(X) \rightarrow X$ be a choice function.  Using transfinite induction, we will construct a sequence $\{H_{\alpha}: 0\leq \alpha<\kappa^+\}$ of subsets of $X$ such that for $0\leq \alpha<\kappa^+$: 

(a) $H_0 = \{L(\varnothing)\}$; 

(b) if $H_{\beta}$ is defined for $\beta < \alpha$, define $H_{\al}$ as follows:
$$f((\bigcup_{\beta <\alpha}H_{\beta})
\cup 
\{L(X \backslash \bigcup_{x \in A} cl(V(x,g(x))) : A \in [\bigcup_{\beta <\alpha}H_{\beta}]^{<\omega}, g :A \rightarrow \ka\}).$$
Note that $|H_{\alpha} | \leq 2^{\kappa}$ for $0 \leq \alpha < \kappa^+$.
Let $H = \bigcup\{H_{\al}: \al  < \ka^+\}$. It follows that $|H| \leq 2^{\ka}$ and $f(H) \subseteq H$. Thus, $H = f(H)$ and  if ${\mathcal F} \in [[H]^{\ka}]^{\ka}$, $a^H_{\mathcal F}({\mathcal F}) \ne \varnothing$. By Lemmas~\ref{lemma1} and~\ref{lemma2}, $H$ is an H-set.

To show that $H = X$, assume that  $q \notin H$. Since $\psi_c(X)\leq \ka$, for each $x \in H$, there is $\al_x < \ka$ such that $q \not\in clV(\al_x,x)$.  Using that $H$ is H-set, there a finite subset $A \in [H]^{<\omega}$ such that $H \subseteq \bigcup_Acl(V(\al_x,x)) \subseteq X\backslash \{q\}$.
Now choose $\al <\ka^+$ such that $A\in [\bigcup_{\beta <\al}H_{\beta}]^{<\omega}$. By (b), $L(X\backslash \bigcup_{x \in A} cl(V(\al_x,x)) \in H_{\al}$ and it follows that $H_{\alpha}\backslash \bigcup_{x \in A} cl(V(\al_x,x)) \ne \varnothing$.  This is a contradiction as $H_{\alpha} \subseteq H \subseteq \bigcup_{x \in A}cl(V(\al_x,x))$.
\end{proof}

It follows from Theorem~\ref{kappafilter} that the cardinality of an H-closed space is at most $2^{\chi(X)}$. The Dow-Porter result given in Corollary~\ref{dowporter} now follows, using a proof similar to the proof of that corollary. We see then two very different proofs of this result, one using open ultrafilters (which generalizes to a result for all Hausdorff spaces, Theorem~\ref{newbound}) and the other using $\kappa$-nets~\cite{Hodel2006} which can be reframed in terms of $\kappa$-filters as in Theorem~\ref{kappafilter}. We note that in~\cite{por93}, Porter used a different type of open ultrafilter approach.

We present several examples. 

\begin{example} 
This example demonstrates that the converse of Lemma~\ref{lemma1} is false, i.e.,  an $\omega$-H-closed space $X$ with a filter base ${\scr F} \in [[X]^{\leq \omega}]^{\leq \omega}$  such $a_{c}({\scr F}) = \varnothing$.  The space $X$ is Tychonoff; so, $a_{c}({\scr F}) = a_{\theta}({\scr F}) = a({\scr F})$.

Consider the partition $\{A_n:n \in \omega\}$  of  $\omega$  where each $A_n$ is infinite. Pick one point, say $a_n \in cl_{\beta\omega}A_n\backslash A_n$.  Let $B = cl_{\beta\omega}\{a_n:n \in \omega\}\backslash \{a_n:n \in \omega\}$.  
We will show that the subspace $X = \beta\omega\backslash B$ is $\omega$-H-closed but has a filter base ${\scr F} \in [[X]^{\leq \omega}]^{\leq \omega}$  such $a_{c}({\scr F}) = \varnothing$.
For $n \in \omega$, let $F_n = \{a_m: m \geq n\}$; thus, ${\scr F} = \{F_n: n \in \omega\} \in [[X]^{\leq \omega}]^{\leq \omega}$.

Let $x \in X$. Then, in $\beta\omega$, $B \cup \{a_n:n \in \omega\}\backslash \{x\}$ is compact and there are disjoint open sets $U,V$ in $\beta\omega$ such that $B \cup \{a_n:n \in \omega\}\backslash \{x\} \subseteq U$ and $x \in V$.  $U\backslash B $ and $V\backslash B$ are disjoint open sets in $X$ such that $ \{a_n:n \in \omega\}\backslash \{x\} \subseteq U\backslash B$ and $x \in V\backslash B$.
If $x = a_m$ for some $m \in \omega$, then $F_{m+1}\cap cl_(X(V\backslash B) = \varnothing$ and $x \not\in a_{c}({\scr F})$.
If $x \not\in  \{a_n:n \in \omega\}$, then $F_0 \cap cl_X(V\backslash B)= \varnothing$ and $x \not\in a_{c}({\scr F})$.
So in both cases, $x \not\in a_{c}({\scr F})$ and $a_{c}({\scr F}) = \varnothing.$ 

To show that $X$ is $\omega$-H-closed (= feebly compact), it suffices to show that the Tychonoff space $X$ is pseudocompact by 1.10(d)(2) in~\cite{PW}. It suffices, by 1Q(6) in~\cite{PW} to show that every infinite subset of $\omega$ is not closed in $X$.  Let $C = \{b_n:n \in \omega\}$ be infinite subset of $\omega$.  As $cl_{\beta\omega}A_n \cap B = \varnothing$ for each $n \in \omega$ and $cl_{\beta\omega}(A_n \cap C)\backslash \omega \ne \varnothing$ whenever $A_n \cap C$ is infinite, it follows that $A_n \cap C$ is finite for each $n \in \omega$.  Thus, by 4B(6) in~\cite{GJ}, $\{a_n:n \in \omega\}$ and $C$ are contained in disjoint cozero-sets (in an extremely disconnected space) and hence $B \cap cl_{\beta\omega}C = \varnothing$. It follows that $cl_{\beta\omega}C\backslash C \subseteq X$.\qed

\end{example}

\begin{example} 
The $\psi$ space $X$ is an example of a first countable, Tychonoff, pseudocompact space with a filter base ${\scr F} \in [[X]^{\leq \frak c}]^{\leq \frak c}$  such that $a_{c}({\scr F}) = \varnothing$.
Let $X = \omega \cup {\scr M}$ where ${\scr M}$ is a maximal family of almost disjoint infinite subsets of $\omega$ and $U \subseteq X$ is open if $A \in {\scr M} \cap U$ implies there is a $F \in \omega^{<\omega}$ such that $A\backslash F \subseteq U$. It is well-known that $X$ is first countable, locally compact, Tychonoff, pseudocompact space that is not countably compact and $|{\scr M}| = \frak c$.  

For $B \in [{\scr M}]^{<\omega}$, let $F_B = {\scr M}\backslash B$  and ${\scr F} = \{F_B: B \in [\scr{M}]^{<\omega}\}$.   We will show that  such $a_{c}({\scr F}) = \varnothing$.  Let $x \in X$.  If $x \in \omega$, then $\{x\}$ is a clopen set disjoint from  ${\scr M} \in {\scr F}$. If $x = A \in {\scr M}$, then if ${\scr M}\backslash \{A\} \in {\scr F}$, then as  $cl_X(\{A\} \cup A) = \{A\} \cup A $, $cl_X(\{A\} \cup A) \cap ({\scr M}\backslash \{A\}) = \varnothing$.
Thus, $a_{c}({\scr F}) = \varnothing$. \qed
\end{example}

For the space $\Bbb U$ constructed in Example~\ref{Urys}, the subspace $\{(n,0): n \in \Bbb N\} \cup \{\infty\}$ is an $\omega$-H-set but not $\omega$-H-closed.

\section{A new cardinality bound for Hausdorff spaces.}

\begin{proposition}\label{intersection}
If $X$ is Hausdorff and $\psi_c(X)\leq\kappa$, then for all $x\in X$ there exists a family $\scr{V}$ of open sets such that $|\scr{V}|\leq\kappa$ and 
$$\{x\}=\Meet\scr{V}=\Meet_{V\in\scr{V}}clV=\Meet_{V\in\scr{V}}c(\widehat{V}).$$
\end{proposition}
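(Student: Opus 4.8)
The plan is to reduce the whole proposition to the single identity $c(\widehat{V})=clV$ for open sets $V$, after which everything follows from the meaning of the closed pseudocharacter. First I would use the hypothesis $\psi_c(X)\leq\kappa$ to fix, for the given $x\in X$, a family $\scr{V}$ of open sets with $x\in V$ for every $V\in\scr{V}$, with $|\scr{V}|\leq\kappa$, and with $\{x\}=\Meet_{V\in\scr{V}}clV$; this is precisely a witnessing family for the closed pseudocharacter at $x$. Since $x\in V\sse clV$ for each $V$, the chain $\{x\}\sse\Meet\scr{V}\sse\Meet_{V\in\scr{V}}clV=\{x\}$ delivers the first two equalities for free, so no further work is needed there.

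The heart of the argument is the claim that $c(\widehat{V})=clV$ for every open $V$. For the inclusion $clV\sse c(\widehat{V})$ I would simply quote Proposition~\ref{properties}(d), which gives $clV=c(V)\sse c(\widehat{V})$. The reverse inclusion $c(\widehat{V})\sse clV$ is where the only real content lies, since a priori $c(\widehat{V})$ could be as large as $cl_\theta(\widehat{V})$ and one must rule out extra $\theta$-closure points. Here I would argue directly from the definition $c(\widehat{V})=\{y:\widehat{U}\meet\widehat{V}\neq\es\text{ for every open }U\ni y\}$: if $y\notin clV$, take the open neighborhood $U=X\minus clV$ of $y$, and use the multiplicativity in Proposition~\ref{hatu}(c) to compute $\widehat{U}\meet\widehat{V}=\widehat{U\meet V}=\widehat{\es}=\es$, the last step because $(X\minus clV)\meet V=\es$ and $\es\sse\widehat{\es}\sse cl\,\es=\es$ by Proposition~\ref{hatu}(a). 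Hence $y\notin c(\widehat{V})$, which proves $c(\widehat{V})\sse clV$.

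With the identity $c(\widehat{V})=clV$ established, the final equality is immediate: $\Meet_{V\in\scr{V}}c(\widehat{V})=\Meet_{V\in\scr{V}}clV=\{x\}$, completing the proof. I expect the main (indeed the only) obstacle to be the inclusion $c(\widehat{V})\sse clV$, and in particular the recognition that the intersection-preservation of the hat operator in Proposition~\ref{hatu}(c) is exactly what collapses the potentially larger set $c(\widehat{V})$ back to $clV$. Without this observation one might fear that passing from $clV$ to $c(\widehat{V})$ genuinely enlarges the intersection and destroys the equality; the content of the proof is that it does not.
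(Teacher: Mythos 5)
Your proof is correct and follows essentially the same route as the paper's: the same witnessing family from $\psi_c(X)\leq\kappa$, and the same key computation $\widehat{W}\meet\widehat{V}=\widehat{W\meet V}=\widehat{\es}=\es$ with $W=X\minus clV$ to exclude points outside $clV$. The only (harmless) difference is that you package this as the general identity $c(\widehat{V})=clV$ for open $V$, whereas the paper applies the computation pointwise to each $y\neq x$ and closes the other containment via Proposition~\ref{properties}(c).
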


\begin{proof}
Fix $x\in X$. As $\psi_c(X)\leq\kappa$, there exists a family $\scr{V}$ of open sets such that $\{x\}=\Meet\scr{V}=\Meet_{V\in\scr{V}}clV$ and $|\scr{V}|\leq\kappa$. Suppose $y\neq x$. There exists $V\in\scr{V}$ such that $y\in X\minus clV$. Let $W=X\minus clV$ and suppose $y\in c(\widehat{V})$. Then, 
$$\es\neq\widehat{W}\meet\widehat{V}=\widehat{W\meet V}=\widehat{\es}=\es,$$
a contradiction. Thus $y\notin c(\widehat{V})$ and $\{x\}=\Meet_{V\in\scr{V}}c(\widehat{V})$. As $\Meet_{V\in\scr{V}}clV\sse\Meet_{V\in\scr{V}}c(\widehat{V})$ by Proposition~\ref{properties}(c), it follows that
$$\{x\}=\Meet\scr{V}=\Meet_{V\in\scr{V}}clV=\Meet_{V\in\scr{V}}c(\widehat{V}).$$
\end{proof}

\begin{proposition}\label{cbound}
If $X$ is Hausdorff and $A\sse X$, then $|c(A)|\leq |A|^{t_c(X)\psi_c(X)}$.
\end{proposition}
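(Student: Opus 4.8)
The plan is to exhibit an injection from $c(A)$ into the set of $\psi_c(X)$-indexed tuples of small subsets of $A$, and then count. Write $\kappa=t_c(X)\psi_c(X)$, so that both $t_c(X)\le\kappa$ and $\psi_c(X)\le\kappa$. Fix $x\in c(A)$. Since $\psi_c(X)\le\kappa$, Proposition~\ref{intersection} supplies a family $\{V_\alpha:\alpha<\psi_c(X)\}$ of open neighborhoods of $x$ with $\{x\}=\Meet_{\alpha}c(\widehat{V_\alpha})$. This identity is the device that will let a recorded tuple pin down $x$ uniquely.

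The crucial step is a localization: I would first show $x\in c(\widehat{V_\alpha}\meet A)$ for every $\alpha$. Given an open $U\ni x$, the set $U\meet V_\alpha$ is an open neighborhood of $x$, so $x\in c(A)$ gives $\widehat{U\meet V_\alpha}\meet A\neq\es$; by Proposition~\ref{hatu}(c) this set equals $\widehat{U}\meet(\widehat{V_\alpha}\meet A)$, whence $\widehat{U}$ meets $\widehat{V_\alpha}\meet A$. As $U$ was arbitrary, $x\in c(\widehat{V_\alpha}\meet A)$. Now I invoke the definition of $t_c(X)$: there is $B_\alpha\in[\widehat{V_\alpha}\meet A]^{\le t_c(X)}$ with $x\in c(B_\alpha)$, and by construction $B_\alpha\sse\widehat{V_\alpha}$ and $B_\alpha\sse A$.

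Next I assign to $x$ the tuple $\langle B_\alpha:\alpha<\psi_c(X)\rangle\in\left([A]^{\le t_c(X)}\right)^{\psi_c(X)}$, and claim this assignment is injective. Indeed $x\in\Meet_\alpha c(B_\alpha)$, while for any $z\in\Meet_\alpha c(B_\alpha)$, monotonicity of $c$ (Proposition~\ref{properties}(b)) together with $B_\alpha\sse\widehat{V_\alpha}$ gives $z\in\Meet_\alpha c(\widehat{V_\alpha})=\{x\}$; thus $\Meet_\alpha c(B_\alpha)=\{x\}$, and the tuple recovers $x$ as the unique element of this intersection. Counting the codomain then finishes the proof: $|c(A)|\le\left|[A]^{\le t_c(X)}\right|^{\psi_c(X)}\le\left(|A|^{t_c(X)}\right)^{\psi_c(X)}=|A|^{t_c(X)\psi_c(X)}$.

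I expect the localization step --- upgrading $x\in c(A)$ to $x\in c(\widehat{V_\alpha}\meet A)$ so that the $t_c$-witnesses $B_\alpha$ can be taken inside both $A$ and $\widehat{V_\alpha}$ --- to be the substantive point, since it is exactly what makes the comparison $c(B_\alpha)\sse c(\widehat{V_\alpha})$ available and hence forces the recorded tuple to determine $x$. The remaining ingredients (Proposition~\ref{intersection}, the intersection formula of Proposition~\ref{hatu}(c), and the cardinal arithmetic) are routine.
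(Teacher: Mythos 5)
Your proof is correct and follows essentially the same route as the paper's: both rest on Proposition~\ref{intersection}, the identity $\widehat{U\meet V}=\widehat{U}\meet\widehat{V}$ to localize membership in $c(A)$ to the sets $\widehat{V_\alpha}\meet A$, and the injection sending $x$ to a $\psi_c(X)$-indexed family of $\le t_c(X)$-sized subsets of $A$ whose $c$-closures intersect to $\{x\}$. The only (immaterial) difference is that the paper applies $c$-tightness once to get a single set $A(x)\sse A$ and then intersects it with each $\widehat{V}$, whereas you localize first and apply tightness separately for each $\alpha$.
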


\begin{proof}
Let $\kappa=t_c(X)\psi_c(X)$. For each $x\in c(A)$, by Proposition~\ref{intersection} there exists a family $\scr{V}_x$ of open sets such that $|\scr{V}_x|\leq\kappa$ and
$$\{x\}=\Meet\scr{V}_x=\Meet_{V\in\scr{V}_x}clV=\Meet_{V\in\scr{V}_x}c(\widehat{V}).$$
As $t_c(X)\leq\kappa$, for all $x\in c(A)$ there exists $A(x)\in[A]^{\leq\kappa}$ such that $x\in c(A(x))$.

Define $\phi:c(A)\to\left[[A]^{\leq\kappa}\right]^{\leq\kappa}$ by 
$$\phi(x)=\{\widehat{V}\meet A(x):V\in\scr{V}_x\}.$$
Observe that $\phi(x)\in\left[[A]^{\leq\kappa}\right]^{\leq\kappa}$. Fix $x\in c(A)$. We will show that $x\in c(\widehat{V}\meet A(x))$ for all $V\in\scr{V}_x$. Let $V\in\scr{V}_x$ and let $U$ be any open set containing $x$. As $x\in c(A(x))$, there exists $a\in A(x)$ such that $U\meet V\in\scr{U}_a$. Thus, $a\in\widehat{U\meet V}=\widehat{U}\meet\widehat{V}$ and it follows that $\widehat{U}\meet\widehat{V}\meet a(X)\neq\es$. This shows $x\in c(\widehat{V}\meet A(x))$. Thus, 
$$\{x\}\sse\Meet_{V\in\scr{V}_x}c(\widehat{V}\meet A(x))\sse\Meet_{V\in\scr{V}_x}c(\widehat{V})=\{x\},$$
where the second containment above follows from Proposition~\ref{properties}(a). Then $\{x\}=\Meet_{V\in\scr{V}_x}c(\widehat{V}\meet A(x))$. Thus if $x\neq y$ then $\phi(x)\neq\phi(y)$, and $\phi$ is one-to-one. Therefore, $|c(A)|\leq |A|^{\kappa}$.
\end{proof}

We turn now to our main result, a new bound for the cardinality of a Hausdorff space $X$. To establish this bound, we use the set-theoretic Theorem 3.1 from \cite{Hodel2006}. This theorem generalizes many closing-off arguments needed to prove cardinality bounds on topological spaces. For reference, we re-state the particular case of this theorem that is used here. 

\begin{theorem}[Hodel]\label{hodel}
Let $X$ be a set, $\kappa$ be an infinite cardinal, $d:\scr{P}(X)\to\scr{P}(X)$ an operator on $X$, and for each $x\in X$ let $\{V(\alpha,x):\alpha<\kappa\}$ be a collection of subsets of $X$. Assume the following:
\begin{itemize}
\item[(T)] (tightness condition) if $x\in d(H)$ then there exists $A\sse H$ with $|A|\leq\kappa$ such that $x\in d(A)$;
\item[(C)] (cardinality condition) if $A\sse X$ with $|A|\leq\kappa$, then $|d(A)|\leq 2^\kappa$;
\item[(C-S)] (cover-separation condition) if $H\neq\es$, $d(H)\sse H$, and $q\notin H$, then there exists $A\sse H$ with $|A|\leq\kappa$ and a function $f:A\to\kappa$ such that $H\sse\Un_{x\in A}V(f(x),x)$ and $q\notin\Un_{x\in A}V(f(x),x)$.
\end{itemize}
Then $|X|\leq 2^\kappa$.
\end{theorem}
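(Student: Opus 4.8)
The plan is to run a standard closing-off argument of length $\kappa^+$, building an increasing chain of subsets of $X$ whose union $H$ is simultaneously closed under the operator $d$ (applied to small subsets) and under the separating points supplied by condition (C-S), and then to argue $H=X$. Fix a choice function $L$ assigning to each nonempty $S\subseteq X$ a point $L(S)\in S$, and for $G\subseteq X$ define a one-step expansion $\Phi(G)$ consisting of $G$ together with (i) the set $d(A)$ for every $A\in[G]^{\leq\kappa}$, and (ii) the witness $L(X\setminus\bigcup_{x\in A}V(f(x),x))$ for every $A\in[G]^{\leq\kappa}$ and every $f:A\to\kappa$ with $\bigcup_{x\in A}V(f(x),x)\neq X$.

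First I would verify the cardinality bookkeeping: if $|G|\leq 2^\kappa$ then $|[G]^{\leq\kappa}|\leq(2^\kappa)^\kappa=2^\kappa$; by condition (C) the sets $d(A)$ contribute at most $2^\kappa\cdot 2^\kappa=2^\kappa$ points, and since there are at most $\kappa^\kappa=2^\kappa$ functions $f$ per $A$, the witnesses contribute at most $2^\kappa$ points as well, so $|\Phi(G)|\leq 2^\kappa$. I then set $H_0$ to be a single point, $H_\alpha=\Phi(\bigcup_{\beta<\alpha}H_\beta)$ for $0<\alpha<\kappa^+$, and $H=\bigcup_{\alpha<\kappa^+}H_\alpha$. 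An induction using $\kappa^+\leq 2^\kappa$ shows $|H_\alpha|\leq 2^\kappa$ at every stage, whence $|H|\leq 2^\kappa$.

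The key structural step is that the closure properties descend to the union $H$, and here I would invoke the regularity of $\kappa^+$: given any $A\in[H]^{\leq\kappa}$, each of its at most $\kappa$ elements appears at some stage below $\kappa^+$, so regularity yields a single $\alpha<\kappa^+$ with $A\subseteq\bigcup_{\beta<\alpha}H_\beta$; then $\Phi$ at stage $\alpha$ has already absorbed $d(A)$ and every associated witness into $H_\alpha\subseteq H$. Consequently $d(A)\subseteq H$ for all $A\in[H]^{\leq\kappa}$, and the tightness hypothesis (T) upgrades this to $d(H)\subseteq H$, since $x\in d(H)$ forces $x\in d(A)\subseteq H$ for some small $A\subseteq H$.

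Finally I would show $H=X$ by contradiction. If some $q\in X\setminus H$ existed, then since $H\neq\es$, $d(H)\subseteq H$, and $q\notin H$, condition (C-S) would furnish $A\in[H]^{\leq\kappa}$ and $f:A\to\kappa$ with $H\subseteq\bigcup_{x\in A}V(f(x),x)$ and $q\notin\bigcup_{x\in A}V(f(x),x)$; the latter forces $\bigcup_{x\in A}V(f(x),x)\neq X$, so the witness $p=L(X\setminus\bigcup_{x\in A}V(f(x),x))$ is defined and lies outside $\bigcup_{x\in A}V(f(x),x)$, yet $p\in H\subseteq\bigcup_{x\in A}V(f(x),x)$ by closure under witnesses, a contradiction. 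Hence $H=X$ and $|X|\leq 2^\kappa$. I expect the main obstacle to be the simultaneous bookkeeping: one must close under $d$ on all $\leq\kappa$-subsets rather than on $H$ itself (condition (C) bounds $|d(A)|$ only for small $A$), while also harvesting the (C-S) witnesses and keeping each stage of size $\leq 2^\kappa$, then leaning on the regularity of $\kappa^+$ to reflect every relevant small subset into a single earlier stage so that all closure conditions survive in the limit.
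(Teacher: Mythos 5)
Your proof is correct and follows the standard closing-off argument: transfinite recursion of length $\kappa^+$, closing under $d$ applied to $\leq\kappa$-sized subsets and under choice-function witnesses for the failed covers, with regularity of $\kappa^+$ used to reflect each small subset of the union into a single earlier stage. The paper does not reprove this theorem (it is quoted from Hodel's survey), but your argument is exactly the intended one, and it is the same scheme the authors carry out concretely in their proof of Theorem~\ref{kappafilter}.
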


Typically, the operator $d$ used in Theorem~\ref{hodel} is either the standard closure operator $cl$, or in some instances the $\theta$-closure $cl_\theta$. We use the operator $c$.

\begin{theorem}\label{newbound}
If $X$ is Hausdorff then $|X|\leq 2^{aL^\prime(X)t_c(X)\psi_c(X)}$.
\end{theorem}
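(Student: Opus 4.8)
The plan is to apply Hodel's set-theoretic Theorem~\ref{hodel} directly, using the operator $c$ in the role of $d$. Set $\kappa = aL^\prime(X)t_c(X)\psi_c(X)$. For each $x\in X$, use Proposition~\ref{intersection} to fix a family $\{V(\alpha,x):\alpha<\psi_c(X)\}$ of open sets satisfying $\{x\}=\Meet_\alpha clV(\alpha,x)=\Meet_\alpha c(\widehat{V(\alpha,x)})$; since $\psi_c(X)\leq\kappa$ we may index these by $\alpha<\kappa$ (padding if necessary). It then remains to verify the three hypotheses (T), (C), and (C-S) of Theorem~\ref{hodel}.

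First I would check (T), the tightness condition: if $x\in c(H)$ then since $t_c(X)\leq\kappa$ there is $A\sse H$ with $|A|\leq\kappa$ and $x\in c(A)$ — this is immediate from the definition of $t_c(X)$ and the bound $t_c(X)\leq\chi(X)$ is not even needed here, only $t_c(X)\leq\kappa$. Next, (C), the cardinality condition: if $|A|\leq\kappa$ then $|c(A)|\leq|A|^{t_c(X)\psi_c(X)}\leq\kappa^\kappa=2^\kappa$ by Proposition~\ref{cbound}. Both of these are essentially bookkeeping given the preparatory results.

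The main obstacle will be (C-S), the cover-separation condition, which is where the invariant $aL^\prime(X)$ must do its work. Suppose $H\neq\es$, $c(H)\sse H$ (so $H$ is $c$-closed), and $q\notin H$. I would use the separating family at $q$: for each $x\in H$, since $\{q\}=\Meet_\alpha c(\widehat{V(\alpha,q)})$ and $x\neq q$, there is $f(x)<\kappa$ with $x\notin c(\widehat{V(f(x),q)})$, equivalently there is an open set $U_x\ni x$ with $\widehat{U_x}\meet\widehat{V(f(x),q)}=\es$. This produces an open cover $\{U_x:x\in H\}$ of the $c$-closed set $H$. Now I invoke the defining property of $aL^\prime(X)$: since $H$ is $c$-closed and $aL(H,X)\leq aL^\prime(X)\leq\kappa$, there is a subfamily of size $\leq\kappa$ whose \emph{closures} cover $H$, i.e. $H\sse\Un_{x\in A}clU_x$ for some $A\in[H]^{\leq\kappa}$. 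The delicate point is reconciling this closure-cover with the sets $V(f(x),q)$ required by (C-S); one arranges, via the disjointness $\widehat{U_x}\meet\widehat{V(f(x),q)}=\es$ and Proposition~\ref{hatu}(d) together with $clU_x\sse c(\widehat{U_x})$ (Proposition~\ref{properties}(d)), that $clU_x$ avoids a neighborhood of $q$ coming from the $V(\cdot,q)$ family, so that the cover by the $clU_x$ separates $H$ from $q$ in the precise form demanded. Packaging $A$ and $f\restriction A$ as the witness then completes (C-S), and Theorem~\ref{hodel} yields $|X|\leq 2^\kappa$.

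The crux is therefore choosing the covering family at $q$ rather than at each $x$, and then matching the almost-Lindel\"of selection (which only controls closures) against the hat-operator disjointness so that the selected closed sets genuinely omit $q$; I expect the verification that $q\notin\Un_{x\in A}clU_x$ — using that $q$ lies in every $\widehat{V(f(x),q)}$ while each $\widehat{U_x}$ is disjoint from the corresponding $\widehat{V(f(x),q)}$, and that $clU_x\sse c(\widehat{U_x})\sse X\minus\widehat{V(f(x),q)}$ need not hold pointwise but can be forced by intersecting finitely or countably many of the $V(\cdot,q)$ — to be the step requiring the most care.
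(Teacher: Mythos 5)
Your verifications of (T) and (C) are exactly right and match the paper. The gap is in (C-S). Hodel's Theorem~\ref{hodel} requires the separating sets to come from families $\{V(\alpha,x):\alpha<\kappa\}$ fixed \emph{in advance} for each $x$ (they drive the transfinite construction behind that theorem), and the witness must literally be a pair $(A,f)$ with $H\sse\Un_{x\in A}V(f(x),x)$ and $q\notin\Un_{x\in A}V(f(x),x)$. Your sets $U_x$, and hence the sets $clU_x$ that end up covering $H$, are chosen only after $H$ and $q$ are given, by separating at $q$ rather than at $x$; they are not members of any pre-assigned $\kappa$-sized family attached to $x$, so the pair you propose to ``package'' is not a witness for (C-S) and Theorem~\ref{hodel} cannot be invoked on it. In addition, the step you yourself flag as delicate is broken as sketched: from $\widehat{U_x}\meet\widehat{V(f(x),q)}=\es$ you cannot pass to $c(\widehat{U_x})\meet\widehat{V(f(x),q)}=\es$, because the operator $c$ does not preserve disjointness (Proposition~\ref{c-prop}(b) gives only one inclusion). (As it happens $q\notin clU_x$ \emph{is} true, but for an elementary reason you do not use: by Proposition~\ref{hatu}(a),(c), $U_x\meet V(f(x),q)\sse\widehat{U_x\meet V(f(x),q)}=\widehat{U_x}\meet\widehat{V(f(x),q)}=\es$, so $V(f(x),q)$ is an open neighborhood of $q$ missing $U_x$.)

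The repair is to separate at each point of $H$ instead of at $q$; this is what the paper does, and it automatically produces a witness of the required form. Fix, for each $x$, open sets $\{W(\alpha,x):\alpha<\kappa\}$ with $\{x\}=\Meet_{\alpha<\kappa}clW(\alpha,x)$ (using $\psi_c(X)\leq\kappa$) and set $V(\alpha,x)=clW(\alpha,x)$. Given a nonempty $c$-closed $H$ and $q\notin H$, choose for each $a\in H$ an index $f(a)$ with $q\notin clW(f(a),a)$. Then $\{W(f(a),a):a\in H\}$ is an open cover of the $c$-closed set $H$, so $aL(H,X)\leq aL^\prime(X)\leq\kappa$ yields $A\in[H]^{\leq\kappa}$ with $H\sse\Un_{a\in A}clW(f(a),a)=\Un_{a\in A}V(f(a),a)$, and this union omits $q$ by construction. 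Note that neither the hat operator nor Proposition~\ref{intersection} is needed in this step; the invariant $aL^\prime$ enters only through the fact that $H$ is $c$-closed.
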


\begin{proof}
Let $\kappa=aL^\prime(X)t_c(X)\psi_c(X)$. As $\psi_c(X)\leq\kappa$, for all $x\in X$ there exists a family $\scr{W}_x=\{W(\alpha,x):\alpha<\kappa\}$ of open sets such that $\{x\}=\Meet\scr{W}_x=\Meet_{W\in\scr{W}_x}clW$. For all $x\in X$ and $\alpha<\kappa$, set $V(\alpha,x)=cl(W(\alpha,x))$. We verify the three conditions in Theorem~\ref{hodel}, where the operator $d$ is $c$. The (T) condition follows immediately as $t_c(X)\leq\kappa$, and (C) follows from Proposition~\ref{cbound}. To verify (C-S), suppose $H\neq\es$ satisfies $c(H)\sse H$. Then, $c(H)=H$, as $H\sse c(H)$ by Proposition~\ref{properties}(a), and $H$ is $c$-closed. Let $q\notin H$. For all $a\in H$, there exist $\alpha_a<\kappa$ such that $q\notin cl(W(\alpha_a,a))=V(\alpha_a,a)$. Define $f:A\to\kappa$ by $f(a)=\alpha_a$. Then $\{W(f(a),a):a\in H\}$ is a cover of $H$ by sets open in $X$. As $aL^\prime(X)\leq\kappa$ and $H$ is $c$-closed, there exists $A\in[H]^{\leq\kappa}$ such that $H\sse\Un_{a\in A}V(f(a),a)$. Since $q\notin\Un_{a\in A}V(f(a),a)$, we see that (C-S) is satisfied. By Theorem~\ref{hodel}, $|X|\leq 2^\kappa$.
\end{proof}

As $aL^\prime(X)\leq aL_c(X)$ by Proposition~\ref{anotherProp} and $t_c(X)\psi_c(X)\leq\chi(X)$ by Proposition~\ref{tcx}, we obtain the following Corollary~\ref{belcam}. This is a slight weakening of the Bella-Cammaroto bound $2^{aL_c(X)t(X)\psi_c(X)}$ for Hausdorff spaces. While $aL^\prime(X)\leq aL_c(X)$, it is unclear whether $t(X)$ and $t_c(X)$ are comparable for a non-regular space $X$, making it unclear whether $2^{aL^\prime(X)t_c(X)\psi_c(X)}$ and $2^{aL_c(X)t(X)\psi_c(X)}$ are comparable.

\begin{corollary}\label{belcam}[Bella/Cammaroto]
If $X$ is Hausdorff then $|X|\leq 2^{aL_c(X)\chi(X)}$.
\end{corollary}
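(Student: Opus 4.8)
The plan is to derive Corollary~\ref{belcam} directly from the main Theorem~\ref{newbound}, so the entire argument is a matter of comparing cardinal invariants rather than constructing anything new. Theorem~\ref{newbound} gives the bound $|X|\leq 2^{aL^\prime(X)t_c(X)\psi_c(X)}$ for any Hausdorff space $X$, so it suffices to show that the exponent $aL^\prime(X)t_c(X)\psi_c(X)$ is dominated by $aL_c(X)\chi(X)$. Once that inequality is in hand, monotonicity of the exponential $2^{(-)}$ finishes the proof immediately.

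First I would bound the exponent factor by factor. By Proposition~\ref{anotherProp} (labeled \texttt{anotherProp}, the statement ``$aL(X)\leq aL^\prime(X)\leq aL_c(X)\leq L(X)$''), we have $aL^\prime(X)\leq aL_c(X)$. Next, by Proposition~\ref{tcx} we have $t_c(X)\leq\chi(X)$, and since $\psi_c(X)\leq\chi(X)$ always holds (a point's character dominates its closed pseudocharacter), the product $t_c(X)\psi_c(X)\leq\chi(X)\cdot\chi(X)=\chi(X)$ because $\chi(X)$ is an infinite cardinal. Combining these, the full exponent satisfies
$$aL^\prime(X)\,t_c(X)\,\psi_c(X)\;\leq\; aL_c(X)\cdot\chi(X).$$
Applying $2^{(-)}$ to both sides and invoking Theorem~\ref{newbound} yields $|X|\leq 2^{aL^\prime(X)t_c(X)\psi_c(X)}\leq 2^{aL_c(X)\chi(X)}$, which is exactly the claimed bound.

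There is genuinely no main obstacle here; the corollary is a formal consequence of the already-established main theorem together with two cited propositions. The only point requiring the slightest care is the observation, flagged in the paragraph preceding the corollary, that this is a \emph{weakening} of the sharper Bella--Cammaroto bound $2^{aL_c(X)t(X)\psi_c(X)}$: since it is unclear whether $t(X)$ and $t_c(X)$ are comparable for non-regular spaces, one cannot claim to recover their bound verbatim, only the coarser form with $\chi(X)$ in place of $t(X)\psi_c(X)$. Thus the proof should simply assemble the two displayed inequalities and conclude, without attempting to sharpen $\chi(X)$ back down to $t(X)\psi_c(X)$.
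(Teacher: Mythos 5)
Your proposal is correct and follows exactly the paper's own route: apply Theorem~\ref{newbound} and then dominate the exponent via $aL^\prime(X)\leq aL_c(X)$ (Proposition~\ref{anotherProp}) and $t_c(X)\psi_c(X)\leq\chi(X)$ (Proposition~\ref{tcx} together with $\psi_c(X)\leq\chi(X)$). Your explicit justification of $t_c(X)\psi_c(X)\leq\chi(X)$ is a slightly more careful spelling-out of a step the paper leaves implicit, but the argument is the same.
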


\begin{corollary}\label{dowporter}[Dow/Porter]
If $X$ is H-closed then $|X|\leq 2^{\psi_c(X)}$.
\end{corollary}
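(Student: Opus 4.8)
The plan is to deduce this corollary not from the general Hausdorff bound of Theorem~\ref{newbound}, but directly from the $\kappa$-filter bound for H-closed spaces established in Theorem~\ref{kappafilter}. Since $\psi_c(X)$ is by convention an infinite cardinal, I would simply set $\kappa=\psi_c(X)$; then the hypothesis $\psi_c(X)\leq\kappa$ of Theorem~\ref{kappafilter} holds trivially, and the theorem yields $|X|\leq 2^{\psi_c(X)}$ in a single step. In this sense the entire content has already been front-loaded into Theorem~\ref{kappafilter} and its supporting Lemmas~\ref{lemma1} and~\ref{lemma2}, so the corollary itself is a one-line specialization.

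It is worth recording why the headline result, Theorem~\ref{newbound}, does not by itself produce the sharp exponent $\psi_c(X)$. For H-closed $X$ we have $L^\prime(X)=\aleph_0$ by Corollary~\ref{hclosed}, hence $aL^\prime(X)=\aleph_0$ by Corollary~\ref{aLL}; substituting this into Theorem~\ref{newbound} gives $|X|\leq 2^{t_c(X)\psi_c(X)}$. Combined with $t_c(X)\leq\chi(X)$ from Proposition~\ref{tcx}, this recovers only the weaker bound $|X|\leq 2^{\chi(X)}$. To collapse the exponent $t_c(X)\psi_c(X)$ down to $\psi_c(X)$ one would need $t_c(X)\leq\psi_c(X)$ for every H-closed space.

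The hard part, and the reason the derivation is routed through Theorem~\ref{kappafilter} rather than Theorem~\ref{newbound}, is precisely this inequality: there is no evident mechanism forcing the $c$-tightness of an H-closed space to be controlled by its closed pseudocharacter. The space $\kappa\omega$, where $t_c=\mathfrak{c}$, is consistent with the bound only because $\psi_c=\mathfrak{c}$ there as well, and this coincidence does not obviously generalize. Equivalently, the cardinality condition (C) of Theorem~\ref{hodel} taken with $\kappa=\psi_c(X)$ would demand $|c(A)|\leq 2^{\psi_c(X)}$ whenever $|A|\leq\psi_c(X)$, whereas Proposition~\ref{cbound} only delivers $|c(A)|\leq|A|^{t_c(X)\psi_c(X)}$. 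It is exactly to bypass this gap between $t_c$ and $\psi_c$ that the small-filter-base machinery of $\S3$ was developed, and why I would take Theorem~\ref{kappafilter} as the direct source of the Dow-Porter estimate.
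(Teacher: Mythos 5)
Your derivation of the corollary itself is correct: Theorem~\ref{kappafilter} with $\kappa=\psi_c(X)$ yields $|X|\leq 2^{\psi_c(X)}$ in one step, and the paper itself notes after that theorem that the Dow--Porter result follows from it. The paper's actual proof of Corollary~\ref{dowporter}, however, does go through Theorem~\ref{newbound}, by a device you have overlooked: it passes to the semiregularization $X(s)$. Since $X$ is H-closed, $X(s)$ is H-closed and semiregular, hence minimal Hausdorff; then $aL^\prime(X(s))=\aleph_0$ by Corollaries~\ref{aLL} and~\ref{hclosed}, so Theorem~\ref{newbound} together with $t_c(X(s))\leq\chi(X(s))$ (Proposition~\ref{tcx}) gives $|X|=|X(s)|\leq 2^{\chi(X(s))}$, and minimal Hausdorffness forces $\chi(X(s))=\psi_c(X(s))\leq\psi_c(X)$. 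Thus your claim that the route through Theorem~\ref{newbound} cannot produce the sharp exponent is mistaken: no inequality $t_c(X)\leq\psi_c(X)$ for the original space is required, because one coarsens the topology to make character and closed pseudocharacter coincide while the underlying set, and hence the cardinality, is unchanged. Your $\kappa\omega$ example in fact illustrates the paper's mechanism rather than an obstruction to it, since $\kappa\omega(s)=\beta\omega$. As for trade-offs: your argument is shorter but leans on the entire small-filter-base machinery of Section 3 (Lemmas~\ref{lemma1} and~\ref{lemma2}), which is logically independent of the main line of the paper, whereas the paper's argument exhibits the Dow--Porter bound as a genuine corollary of the new Hausdorff bound of Theorem~\ref{newbound}, which is the point the authors want to make.
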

\begin{proof}
The semiregularization $X(s)$ of $X$ is also H-closed, and so by Corollary~\ref{aLL} and Corollary~\ref{hclosed} it follows that $aL^\prime(X_s)=\aleph_0$. Thus, by Theorem~\ref{newbound}, we have that $$|X|=|X_s|\leq 2^{t_c(X(s))\psi_c(X(s))}\leq 2^{\chi(X(s))}=2^{\psi_c(X(s))}\leq 2^{\psi_c(X)},$$
where the second equality follows as $X(s)$ is minimal Hausdorff.
\end{proof}

We see then that Theorem~\ref{newbound} leads to a common proof of the cardinality bound $2^{\chi(X)}$ for both H-closed spaces and Lindel\"of spaces simultaneously. We can isolate the precise property $\scr{P}$ that both H-closed spaces and Lindel\"of spaces $X$ share from which it follows from Theorem~\ref{newbound} that $|X|\leq 2^{\chi(X)}$. Property $\scr{P}$ is the property that every open cover $\scr{V}$ of $X$ has a countable subfamily $\scr{W}$ such that $X=\Un_{W\in\scr{W}}\widehat{W}$. That is, $L^\prime(X)=\aleph_0$. In fact, the weaker property $aL^\prime(X)=\aleph_0$ also suffices.

\section{Generalized H-closed Spaces}

The standard method of generalizing the concept of  H-closed is to use the well-known cardinality invariant of almost Lindel\" of -- when  $aL(X) \leq \aleph_0$, the space $X$ is a generalized H-closed space.  One of the main goals in this paper is seek generalized H-closed spaces for which it is possible to obtain a cardinality bound of $X$. A space $X$ satisfying $a'L(X) \leq \aleph_0$ is another generalized H-closed space for which it is possible to obtain a cardinality bound of $X$ (see Theorem~\ref{newbound}).  We used the concept of $\kappa$-H-closed, another generalized H-closed space, to obtain a cardinality bound of H-closed spaces (see Theorem~\ref{kappafilter}).  In this section, we examine three new generalized H-closed concepts with the common goal of obtaining a cardinality bound of a space.\\\\
\emph{Approach I.}\\

The roots of our first generalized H-closed space can be traced back  to  the famous 1929 memoir (the Russian version of~\cite{Ale-Ury}) and uses a recent characterization of H-closed spaces by Osipov~\cite{O}.  Alexandroff and Urysohn proved this property of H-closed spaces: If $X$ is an H-closed space and $A \subseteq X$ is an infinite subset, there is a point $p \in X$ such that $|A| = |A \cap cl(U)|$ whenever $p \in U \in \tau(X)$.  

\begin{definition} Let $X$ be a space and  $A \subseteq X$. A point $p \in X$ is a $
\Theta$-complete accumulation point of $A$ (we write $p \in \Theta$CAP($A$)) if whenever $p \in U \in \tau(X)$, $|cl(U) \cap A| = |A|$.  In particular, $cl_{\theta}A = (\Theta$CAP($A$))$ \cup  (cl_{\theta}A\backslash\Theta$CAP($A$)).
\end{definition}

An exciting new characterization of H-closed spaces using the concept of $\Theta$-complete accumulation points was established in 2013  by Osipov~\cite{O}.

\begin{theorem}\label{osipov}{\rm{[Osipov]}} Let $X$ be a Hausdorff space and $A \sse X$ an infinite subset,
\begin{itemize}
\item[(a)] $X$ is H-closed iff for each  open cover $\mathcal{C}$ of $\Theta$CAP($A$), there is a finite subfamily $\scr{F} \subseteq\scr{C}$ such that    $|A \backslash int(cl(\cup\scr{F}))| < |A|$. 
\item[(b)] If $X$ is H-closed space , then  $\Theta$CAP($A$) is an H-set. 
\end{itemize} 
\end{theorem}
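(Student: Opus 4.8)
The plan is to obtain the forward direction of (a) directly from H-closedness, to read off (b) as an immediate consequence, and to handle the reverse direction of (a) by contraposition via a $\theta$-analogue of the Alexandroff--Urysohn complete-accumulation-point theorem. For the forward direction, fix an infinite $A\sse X$, assume $X$ is H-closed, and let $\scr{C}$ be an open cover of $\Theta\textup{CAP}(A)$. For every $x\in X\minus\Theta\textup{CAP}(A)$ the failure of the $\Theta$-complete accumulation condition yields an open $W_x\ni x$ with $|cl(W_x)\meet A|<|A|$, so $\scr{C}\cup\{W_x:x\notin\Theta\textup{CAP}(A)\}$ is an open cover of $X$; H-closedness produces a finite subfamily whose closures cover $X$, which I split as $\scr{F}\cup\scr{D}$ with $\scr{F}\sse\scr{C}$ finite and $\scr{D}$ a finite set of the $W_x$. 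Putting $V=\Un\scr{F}$ and $W=\Un\scr{D}$ gives $cl(V)\un cl(W)=X$. The one topological point is that the open set $X\minus cl(W)$ lies in $cl(V)$, hence in $int(cl(V))$, so $A\minus int(cl(V))\sse A\meet cl(W)$. As $cl(W)=\Un_{W_x\in\scr{D}}cl(W_x)$ is a finite union of sets each meeting $A$ in fewer than $|A|$ points, $|A\meet cl(W)|<|A|$, and therefore $|A\minus int(cl(\Un\scr{F}))|<|A|$.

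Part (b) then falls out. Assuming $X$ H-closed, let $\scr{V}$ be any cover of $\Theta\textup{CAP}(A)$ by sets open in $X$, and apply (a) to obtain a finite $\scr{F}\sse\scr{V}$ with $|A\minus int(cl(V))|<|A|$, where $V=\Un\scr{F}$. I claim $\Theta\textup{CAP}(A)\sse cl(V)=\Un_{F\in\scr{F}}cl(F)$: if some $p\in\Theta\textup{CAP}(A)$ had $p\notin cl(V)$, then $X\minus cl(V)$ would be an open neighborhood of $p$, so $|cl(X\minus cl(V))\meet A|=|A|$ by definition of $\Theta\textup{CAP}$; but $cl(X\minus cl(V))=X\minus int(cl(V))$, so this reads $|A\minus int(cl(V))|=|A|$, contradicting the previous inequality. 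Thus $\scr{F}\sse\scr{V}$ is a finite family whose closures cover $\Theta\textup{CAP}(A)$, which is exactly the H-set property.

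For the reverse direction of (a) I would argue by contraposition. Testing the hypothesis against the empty cover $\scr{C}=\es$ of $\Theta\textup{CAP}(A)$ already forces $\Theta\textup{CAP}(A)\neq\es$ for every infinite $A$ (otherwise $\scr{F}=\es$ and the required inequality becomes $|A|<|A|$). So it suffices to prove that if every infinite subset of $X$ has a $\Theta$-complete accumulation point then $X$ is H-closed. Assuming $X$ is not H-closed, I would fix an open cover of least cardinality $\ka$ admitting no finite subfamily whose closures cover $X$, arrange (by minimality and reindexing) that $\ka$ is regular and that no initial segment of the closures covers $X$, and then choose by transfinite recursion points $a_\be\in X\minus\Un_{\al<\be}cl(V_\al)$ for $\be<\ka$. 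One checks that $A=\{a_\be:\be<\ka\}$ has cardinality $\ka$ and that no point is a $\Theta$-complete accumulation point of $A$: any point lies in some cover member $V_\ga$, and $a_\be\in cl(V_\ga)$ forces $\be\leq\ga$, so $cl(V_\ga)$ meets $A$ in a set of size $<\ka=|A|$. This contradicts $\Theta\textup{CAP}(A)\neq\es$.

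The main obstacle is precisely this transfinite construction. The delicate steps are passing to a minimal bad cover whose cardinality is regular and whose initial-segment closures do not cover $X$ (the shift from ``the sets cover'' to ``the closures cover'' is where care is needed), and then verifying simultaneously that the recursively built set attains full cardinality $\ka$ and genuinely has empty $\Theta\textup{CAP}$. By comparison, the forward direction and part (b) are short, resting only on the identity $cl(X\minus cl(V))=X\minus int(cl(V))$ and the elementary fact that a finite union of sets each meeting $A$ in fewer than $|A|$ points meets $A$ in fewer than $|A|$ points.
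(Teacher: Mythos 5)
The paper does not actually prove this theorem: it is quoted from Osipov's paper~\cite{O} without proof, so the only internal point of comparison is Proposition~\ref{osipovk}, whose proof is precisely your forward-direction argument (cover $X\minus\Theta$CAP$(A)$ by open sets whose closures meet $A$ in fewer than $|A|$ points, extract a finite proximate subcover of the combined cover, and use $X\minus int(cl(\cup\scr{F}))\sse cl(\cup\scr{D})$). That part of your proposal is correct, and so is your deduction of (b) from (a) via the identity $cl(X\minus cl(V))=X\minus int(cl(V))$; both match the technique the authors use at the $\ka$-level.

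The genuine gap is in the reverse direction of (a), and it sits exactly at the step you flag but do not carry out. There are two problems. First, you replace the full hypothesis (the covering condition for open covers of $\Theta$CAP$(A)$) by its much weaker consequence ``$\Theta$CAP$(A)\neq\es$ for every infinite $A$'' and offer no argument that this weaker statement still implies H-closedness; nothing in the paper or in your proposal certifies that the two are equivalent, and if they are not, the reduction is fatal rather than merely incomplete. Second, even granting the reduction, the transfinite construction does not go through ``by minimality and reindexing.'' In the compact Alexandroff--Urysohn argument, both the regularity of the minimal cardinality $\ka$ of a bad cover and the fact that no initial segment covers are proved by observing that a subfamily of an open cover which covers $X$ is itself an open cover of smaller cardinality, to which minimality applies. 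Here ``covers'' must everywhere be replaced by ``closures cover,'' and a subfamily $\{V_\al:\al<\be\}$ with $\Un_{\al<\be}cl(V_\al)=X$ need not be an open cover of $X$ (its union is merely dense), so the minimality of $\ka$ --- a statement about open covers --- cannot be applied to it; likewise the regularity argument groups the cover into $cf(\ka)$ blocks and takes unions, but $cl(\Un\scr{V}_i)\neq\Un_{V\in\scr{V}_i}cl(V)$ for infinite blocks, so a finite proximate subcover by the block-unions yields only a subfamily of size $<\ka$ with dense union, not a smaller open cover. Without (i) regularity of $\ka$ and (ii) the non-covering of initial segments of closures, you can guarantee neither that the recursively chosen points exist at limit stages nor that $|A|=\ka$, and the construction collapses. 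These points need a genuinely different argument (presumably the one in~\cite{O}), not a routine adaptation of the compact case.
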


We start the process of generalizing H-closed spaces by expanding the notation $\Theta$CAP. 

\begin{definition} Let $X$ be an H-closed space, $\ka$ an infinite cardinal, and $A$ an infinite subset. Let $cl_{\theta }^{\ka}A$ denote $\{p \in X: |cl(U) \cap A| \geq \ka$ for $p \in U \in \tau(X)\}$.  Note that $cl_{\theta }^{|A|}A  = \Theta$CAP($A$).  If $\sigma$ is an infinite cardinal and  $\kappa \geq \sigma$, then $cl^{\kappa}_{\theta}(A) \subseteq cl^{\sigma}_{\theta}(A)$  and $cl^{\omega}_{\theta}(A)$ is the set of accumulation points of $A$.
\end{definition}

Using a techniques similar to the proof of Theorem~\ref{osipov}, it is possible to obtain this result at the $\kappa$ level.:

\begin{proposition}\label{osipovk}  Let $X$ be an H-closed space, $\ka$ an infinite cardinal, $A$ an infinite subset,  and $\scr{C}$  an open cover of $cl_{\theta }^{\ka}A$.  There is a finite subfamily $\scr{D} \subseteq \scr{C}$ such that    $|A \backslash int(cl(\cup\scr{D}))| < \ka$. 
\end{proposition}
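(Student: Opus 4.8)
The plan is to adapt the Alexandroff--Urysohn covering technique underlying the forward (H-closed $\Rightarrow$ covering) direction of Theorem~\ref{osipov}, replacing the threshold $|A|$ by the prescribed cardinal $\ka$. First I would enlarge $\scr{C}$ to an open cover of all of $X$: for every point $p\in X\minus cl_\theta^\ka A$ the definition of $cl_\theta^\ka A$ furnishes an open set $U_p\ni p$ with $\card{cl(U_p)\meet A}<\ka$, and since $\scr{C}$ already covers $cl_\theta^\ka A$, the family $\scr{C}\un\{U_p:p\in X\minus cl_\theta^\ka A\}$ is an open cover of $X$. Applying the H-closedness of $X$ yields finitely many members whose closures cover $X$; collecting the chosen members of $\scr{C}$ into a finite subfamily $\scr{D}=\{C_1,\dots,C_n\}$ and the chosen neighborhoods as $U_{p_1},\dots,U_{p_m}$, I obtain
$$X=\Un_{i=1}^n cl(C_i)\;\un\;\Un_{j=1}^m cl(U_{p_j}).$$

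The substance of the argument is then to pass from the union of closures $\Un_i cl(C_i)$ that H-closedness delivers to the regular-open set $int(cl(\Un\scr{D}))$ that appears in the conclusion. Writing $G=\Un_{i=1}^nC_i=\Un\scr{D}$, I would invoke the standard identity $X\minus int(cl(G))=cl(X\minus cl(G))$ together with $cl(G)=\Un_{i=1}^n cl(C_i)$ (a finite union). The displayed equation gives $X\minus\Un_i cl(C_i)\sse\Un_j cl(U_{p_j})$, and because $\Un_j cl(U_{p_j})$ is closed (a finite union of closed sets) this containment is preserved under closure:
$$X\minus int(cl(G))=cl\bigl(X\minus cl(G)\bigr)\sse\Un_{j=1}^m cl(U_{p_j}).$$
Intersecting with $A$ then shows $A\minus int(cl(\Un\scr{D}))\sse A\meet\Un_{j=1}^m cl(U_{p_j})$.

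Finally I would read off the cardinality bound: since $\card{cl(U_{p_j})\meet A}<\ka$ for each of the finitely many indices $j$ and $\ka$ is infinite, a finite union of sets of size $<\ka$ again has size $<\ka$, whence $\card{A\minus int(cl(\Un\scr{D}))}<\ka$, as required.

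The step I expect to be the main obstacle is exactly the one resolved in the second paragraph. H-closedness only ever provides a finite family whose \emph{closures} cover, so one is handed $\Un_i cl(C_i)$ rather than the open set $\Un\scr{D}$ or its regular-open hull $int(cl(\Un\scr{D}))$, and the boundary points in $cl(G)\minus int(cl(G))$ threaten a priori to contribute uncontrollably many points of $A$. The identity $X\minus int(cl(G))=cl(X\minus cl(G))$ is what dissolves this difficulty, for it reroutes every such boundary point back into the closed set $\Un_j cl(U_{p_j})$ whose trace on $A$ was arranged to have size $<\ka$. Once this is in place, the remaining estimate is routine cardinal arithmetic over a finite union.
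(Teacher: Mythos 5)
Your proposal is correct and follows essentially the same route as the paper's proof: enlarge $\scr{C}$ by neighborhoods $U_p$ with $|cl(U_p)\meet A|<\ka$ at points outside $cl_\theta^\ka A$, apply H-closedness to extract finite subfamilies, and pass from $X\minus cl(\cup\scr{D})\sse cl(\cup\scr{U})$ to $X\minus int(cl(\cup\scr{D}))\sse cl(\cup\scr{U})$ via the identity $X\minus int(cl(G))=cl(X\minus cl(G))$. The only difference is expository: you spell out the closure-of-complement step and the finite cardinal arithmetic that the paper leaves implicit.
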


\begin{proof} Let $\scr{C}$ be an open cover of $cl_{\theta }^{\ka}A$.  For $p\not\in cl_{\theta }^{\ka}A$, there is an open set $U_p$ such that $p \in U_p$ and $|cl(U_p) \cap A| < \ka$.  Let $\scr{O} = \{U_p: p \not\in cl_{\theta }^{\ka}A\}$.
As $X$ is H-closed, there are finite subfamilies $\scr{D} \subseteq\scr{C}$ and $\scr{U} \subseteq\scr{O}$ such that $X = cl(\cup\scr{D}) \cup cl(\cup\scr{U})$.  So, $cl(X\backslash cl(\cup\scr{D})) \subseteq cl(\cup\scr{U})$ implying that $X\backslash int(cl(\cup\scr{D})) \subseteq cl(\cup\scr{U})$.  Since $|(cl(\cup\scr{U})) \cap A| < \ka$, it follows that $|A\backslash int(cl(\cup\scr{D}))| < \ka$. \end{proof}

\begin{definition}\label{kwHclosed}
Let $X$ be a space and $\kappa$ be infinite cardinal.  A  filter base $\scr{F}$ on $X$ is said to be $\kappa-${\it wide} if $|cl_{\theta}(A)| \geq \kappa$ for each $A \in\scr{F}$. A space $X$ is $\kappa$\emph{wH-closed} if for each $\kappa-$wide filter base $\scr{F}$ on $X$, $a_{\theta}\scr{F}\ne \varnothing$.
\end{definition}
 
\begin{proposition}\label{propA}
 Let $X$ be a space and $\kappa$ be an infinite cardinal.
\begin{itemize}
\item[(a)] The space $X$ is H-closed iff $X$ is $\aleph_0$wH-closed.
\item[(b)]  Let $X$ be a space and $\kappa$ be infinite cardinal.  $X$ is $\kappa$wH-closed iff for each $\kappa-$wide open filter base $\scr{F}$, $a\scr{F}\ne \varnothing$.

\end{itemize}
\end{proposition}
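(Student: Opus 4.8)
The plan is to prove both parts of Proposition~\ref{propA} by exploiting the relationship between $\theta$-closure, arbitrary filter bases, and open filter bases, leaning on the fact established in Definition~\ref{kHcl} and the subsequent remark that a Hausdorff space is $\omega$-H-closed iff it is feebly compact, and more importantly on the characterization philosophy running through Theorem~\ref{H-cldChar} and Theorem~\ref{HclosedChar}.

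For part (a), the forward direction is essentially trivial: if $X$ is H-closed, then every filter base has nonempty $\theta$-adherence (this is the classical characterization of H-closed, recoverable from the $a_c$-adherence result in Theorem~\ref{H-cldChar} together with $a_c(\scr{F}) \subseteq a_\theta(\scr{F})$), and in particular every $\aleph_0$-wide filter base does. For the converse, I would argue contrapositively: if $X$ is not H-closed, I must produce an $\aleph_0$-wide filter base with empty $\theta$-adherence. Non-H-closedness gives a filter base $\scr{G}$ with $a_\theta(\scr{G}) = \varnothing$. The obstacle is that the sets in $\scr{G}$ need not have $\theta$-closure of size $\geq \aleph_0$; I would need to pass to a subfamily or enlargement guaranteeing $\kappa$-wideness. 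The key observation is that if some $F \in \scr{G}$ has $|cl_\theta(F)| < \aleph_0$, i.e.\ $cl_\theta(F)$ is finite, then since $cl_\theta(F) \supseteq cl(F) \supseteq F$, the set $F$ itself is finite; a finite set in a Hausdorff space is $\theta$-closed, so it would lie in the adherence — forcing a point into $a_\theta(\scr{G})$, a contradiction. Hence every $F \in \scr{G}$ is automatically infinite with $|cl_\theta(F)| \geq \aleph_0$, so $\scr{G}$ is already $\aleph_0$-wide.

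For part (b), the nontrivial direction is showing that adherence-nonemptiness for $\kappa$-wide \emph{open} filter bases implies it for all $\kappa$-wide filter bases. Given an arbitrary $\kappa$-wide filter base $\scr{F}$, I would form the associated open filter base by taking, for each $F \in \scr{F}$, open sets $U$ with $F \subseteq U$, or more precisely generate the open filter $\scr{F}^\ast = \{U \in \tau(X) : F \subseteq U \text{ for some } F \in \scr{F}\}$. The standard fact is that $a_\theta(\scr{F}) = a_\theta(\scr{F}^\ast)$, since $\theta$-adherence depends only on the open sets containing the filter members. The main obstacle is verifying that $\scr{F}^\ast$ remains $\kappa$-wide: I need $|cl_\theta(U)| \geq \kappa$ for each such $U$. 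This follows because $F \subseteq U$ gives $cl_\theta(F) \subseteq cl_\theta(U)$ by monotonicity of $\theta$-closure, and $|cl_\theta(F)| \geq \kappa$ by $\kappa$-wideness of $\scr{F}$. Thus $\scr{F}^\ast$ is a $\kappa$-wide open filter base, its $\theta$-adherence is nonempty by hypothesis, and therefore $a_\theta(\scr{F}) = a_\theta(\scr{F}^\ast) \neq \varnothing$. The reverse implication of (b) is immediate since open filter bases are a special case of filter bases.

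I expect the main subtlety to lie in part (a)'s converse, specifically in confirming that non-H-closedness yields a filter base whose members are all genuinely infinite rather than having to manually enforce wideness — the clean resolution is the finiteness-forces-closedness argument above, which shows wideness comes for free. The whole proof should be short, with the only real content being the passage between arbitrary and open filter bases (a routine but essential lemma-style step) and the observation that empty $\theta$-adherence precludes finite members.
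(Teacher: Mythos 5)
Your proposal is correct and follows essentially the same route as the paper, which declares (a) immediate and proves (b) by citing the standard fact that the generated open filter base $\scr{G}=\{U\in\tau(X): F\sse U\text{ for some }F\in\scr{F}\}$ satisfies $a(\scr{G})=a_\theta(\scr{F})$; your verification that $\scr{G}$ inherits $\kappa$-wideness via $cl_\theta(F)\sse cl_\theta(U)$ is exactly the point the paper leaves implicit. One small repair in (a): the inference ``a finite $F\in\scr{G}$ is $\theta$-closed, so it would lie in the adherence'' is not valid as written, since $\theta$-closedness of $F$ only yields $a_\theta(\scr{G})\sse cl_\theta(F)=F$; what you actually need is that the traces $\{G\meet F:G\in\scr{G}\}$ form a downward-directed family of nonempty subsets of the finite set $F$ (each $G\meet F$ contains a member of $\scr{G}$), hence there are only finitely many distinct traces and directedness produces a nonempty trace contained in all of them, giving a point of $\bigcap\scr{G}\sse a_\theta(\scr{G})$ and the desired contradiction.
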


\begin{proof}
The proof of (a) is immediate. The proof of (b) follows the known result that if $\scr{F}$ is a filter base on a space $X$, then the open filter base $\scr{G} = \{U \in \tau(X): F \subseteq U$ for some $F \in \scr{F}\}$ has the property $a\scr{G} = a_{\theta}\scr{F}$.
\end{proof}

\begin{theorem}\label{kwHchar} 
Let $X$ be space and $\kappa$ an infinite cardinal.  The space $X$ is 
$\kappa$wH-closed iff for every subset $A \subseteq X$ where $\kappa \leq |A|$ and $\scr{C}$ is an open cover of $cl_{\theta }^{\ka}A $, there is a finite subfamily $\scr{B} \subseteq\scr{C}$ such that $|A\backslash int(cl(\cup\scr{B}))| < \kappa$.  
\end{theorem}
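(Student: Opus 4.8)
The plan is to prove both implications by contraposition, using throughout Proposition~\ref{propA}(b), which lets me replace an arbitrary $\kappa$-wide filter base by a $\kappa$-wide \emph{open} filter base and its ordinary adherence, together with the standard fact that $cl_\theta U=cl U$ for every open $U$. I will also use the elementary identity $\bigcap_{Z\in[A]^{<\kappa}}cl_\theta(A\setminus Z)=cl_\theta^\kappa A$: a point $p$ lies in the left-hand side iff for every open $U\ni p$ and every $Z\in[A]^{<\kappa}$ one has $cl(U)\cap(A\setminus Z)\neq\varnothing$, which (taking $Z=cl(U)\cap A$ when that set is small) is exactly the condition $|cl(U)\cap A|\geq\kappa$ for all open $U\ni p$. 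Finally, since finite unions commute with closure and $cl(int(cl\,C))=cl\,C$, the regular-open set $int(cl(\bigcup\scr{B}))$ is unchanged if each $C\in\scr{B}$ is replaced by $int(cl\,C)$; hence in the ``only if'' direction I may assume the given cover $\scr{C}$ consists of regular-open sets.

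For the ``if'' direction I argue contrapositively. Assume $X$ is not $\kappa$wH-closed and fix, by Proposition~\ref{propA}(b), a $\kappa$-wide open filter base $\scr{G}$ with $a\scr{G}=\varnothing$. For each $p\in X$ choose $G_p\in\scr{G}$ and an open $W_p\ni p$ with $cl(W_p)\cap G_p=\varnothing$, and set $\scr{C}=\{W_p:p\in X\}$, an open cover of $X$ (hence of $cl_\theta^\kappa A$ for any $A$). Let $\scr{G}'$ be the filter base generated by the finite intersections of the $G_p$; it is again $\kappa$-wide and, since each $p$ is excluded by $cl\,G_p$, has $a\scr{G}'=\varnothing$. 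As each $G'\in\scr{G}'$ satisfies $|cl\,G'|\geq\kappa$, I pick $S_{G'}\subseteq cl\,G'$ with $|S_{G'}|=\kappa$ and put $A=\bigcup\{S_{G'}:G'\in\scr{G}'\}$; there is no cardinality restriction on $A$ in the statement, so this is legitimate and $|A|\geq\kappa$. Now for any finite $\scr{B}=\{W_{p_1},\dots,W_{p_n}\}\subseteq\scr{C}$, the element $G'=\bigcap_i G_{p_i}\in\scr{G}'$ satisfies $cl(\bigcup\scr{B})=\bigcup_i cl(W_{p_i})\subseteq X\setminus G'$, whence $cl\,G'\subseteq X\setminus int(cl(\bigcup\scr{B}))$ and therefore $A\setminus int(cl(\bigcup\scr{B}))\supseteq A\cap cl\,G'\supseteq S_{G'}$ has at least $\kappa$ points. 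Thus no finite subfamily of $\scr{C}$ satisfies the conclusion, so the covering condition fails.

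For the ``only if'' direction, again contrapositively, suppose there are $A$ with $|A|\geq\kappa$ and a regular-open cover $\scr{C}$ of $cl_\theta^\kappa A$ with $|A\setminus int(cl(\bigcup\scr{B}))|\geq\kappa$ for every finite $\scr{B}\subseteq\scr{C}$. I propose to contradict $\kappa$wH-closedness via the family
$$\scr{F}=\bigl\{(A\setminus cl(\textstyle\bigcup\scr{B}))\setminus Z:\scr{B}\in[\scr{C}]^{<\omega},\ Z\in[A]^{<\kappa}\bigr\},$$
which is closed under finite intersections. Granting for the moment that its members are nonempty, its $\theta$-adherence is empty: if $p\in a_\theta\scr{F}$, then taking $\scr{B}=\varnothing$ gives $p\in\bigcap_{Z}cl_\theta(A\setminus Z)=cl_\theta^\kappa A$, while taking $Z=\varnothing$ gives, for every finite $\scr{B}$,
$$p\in cl_\theta(A\setminus cl(\textstyle\bigcup\scr{B}))\subseteq cl_\theta(X\setminus cl(\textstyle\bigcup\scr{B}))=X\setminus int(cl(\textstyle\bigcup\scr{B})),$$
so $p\notin\bigcup\scr{B}$ for all finite $\scr{B}$ and hence $p\notin\bigcup\scr{C}\supseteq cl_\theta^\kappa A$, a contradiction. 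Thus $a_\theta\scr{F}=\varnothing$, and it remains only to check that $\scr{F}$ is $\kappa$-wide, i.e. that $|A\setminus cl(\bigcup\scr{B})|\geq\kappa$ for every finite $\scr{B}$.

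This last point is the crux and the step I expect to be the main obstacle. The hypothesis supplies $|A\setminus int(cl(\bigcup\scr{B}))|\geq\kappa$, and the two sets differ exactly by the nowhere-dense boundary set $A\cap\partial(cl(\bigcup\scr{B}))$, which could a priori carry $\kappa$ points of $A$ and so destroy $\kappa$-wideness. The plan to close the gap is to exploit that $\scr{C}$ is regular-open, so that any $q\in A\cap\partial(cl(\bigcup\scr{B}))$ lies outside $\bigcup\scr{B}$: if such a $q$ is a $\kappa$-complete accumulation point of $A$ (i.e. $q\in cl_\theta^\kappa A$) it is covered by some $C'\in\scr{C}$ and is absorbed into $int(cl(\bigcup(\scr{B}\cup\{C'\})))$, whereas each $q\in A\cap\partial(cl(\bigcup\scr{B}))$ with $q\notin cl_\theta^\kappa A$ has an open neighborhood meeting $A$ in fewer than $\kappa$ points. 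Reducing the $int(cl)$ hypothesis to the $cl$ statement thus amounts to showing that the complete-accumulation part of such a boundary admits a finite absorbing subfamily of $\scr{C}$ — an $H$-set–type finite-subcover property of $cl_\theta^\kappa A$, analogous to the behaviour exhibited for H-closed spaces in Proposition~\ref{osipovk}. I expect this boundary analysis, carried out by an induction that peels off the complete-accumulation points lying on successive nowhere-dense boundaries, to be the technical heart of the argument.
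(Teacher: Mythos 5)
Your contrapositive argument for the direction ``covering condition $\Rightarrow$ $\kappa$wH-closed'' is correct: starting from a $\kappa$-wide open filter base $\scr{G}$ with $a\scr{G}=\varnothing$ (via Proposition~\ref{propA}(b)), the cover $\{W_p\}$ and the set $A=\bigcup S_{G'}$ do witness the failure of the covering condition, since $cl(G')\subseteq X\setminus int(cl(\cup\scr{B}))$ for the corresponding $G'$. This is essentially the paper's converse direction, packaged slightly differently (the paper instead applies the covering condition to $A=cl(U)$ for $U$ in the filter base with $|cl(U)|$ minimal).

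The other direction, however, has a genuine gap, and it is exactly the one you flag: the $\kappa$-wideness (indeed the nonemptiness) of your filter base $\scr{F}=\{(A\setminus cl(\cup\scr{B}))\setminus Z\}$ requires $|A\setminus cl(\cup\scr{B})|\geq\kappa$, while the hypothesis only gives $|A\setminus int(cl(\cup\scr{B}))|\geq\kappa$. These sets differ by $A\cap(cl(\cup\scr{B})\setminus int(cl(\cup\scr{B})))$, and nothing prevents $A$ from being concentrated on this boundary, in which case your filter elements are small or empty and the contradiction with $\kappa$wH-closedness never gets off the ground. Your proposed repair --- absorbing the boundary points lying in $cl_\theta^\kappa A$ into $int(cl(\cup\scr{B}'))$ for a larger finite $\scr{B}'$ --- requires a finite-subcover (H-set type) property of $cl_\theta^\kappa A$ that is essentially equivalent to the statement being proved, so the induction you sketch is not obviously well-founded. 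The paper sidesteps the boundary issue entirely by building the filter base out of \emph{open} sets: with $\scr{E}=\{U_p: p\notin cl_\theta^\kappa A,\ |cl(U_p)\cap A|<\kappa\}$, it takes $\scr{F}=\{X\setminus cl(\cup\scr{A}):\scr{A}\in[\scr{C}\cup\scr{E}]^{<\omega}\}$. For open sets the $\theta$-closure equals the closure, and $cl(X\setminus cl(\cup\scr{B}))=X\setminus int(cl(\cup\scr{B}))$ \emph{does} contain the boundary; the paper's computation shows each $cl(X\setminus cl(\cup\scr{A}))$ contains $(A\setminus int(cl(\cup\scr{B})))\setminus cl(\cup\scr{D})$, a set of size at least $\kappa$ directly from the hypothesis, so $\kappa$-wideness is immediate, while the members coming from $\scr{E}$ kill the adherence at points outside $cl_\theta^\kappa A$. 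If you replace your filter elements by these open sets (equivalently, argue with $X\setminus int(cl(\cup\scr{B}))$ rather than $A\setminus cl(\cup\scr{B})$), your adherence argument goes through essentially unchanged; as written, the ``only if'' direction is not proved.
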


\begin{proof} 
Suppose $X$ is $\kappa$wH-closed and  $A \subseteq X$ where $\kappa \leq |A|$ and $\scr{C}$ is an open cover of $cl_{\theta }^{\ka}A $.  For each $p \not\in cl_{\theta }^{\ka}A $, there is $p \in U_p \in\tau(X)$ such that  $|cl(U_p) \cap A| < \kappa$.  Let $\scr{E} = \{ U_p: p \not\in cl_{\theta }^{\ka}A \}$.   Assume, by way of contradiction, that for each  finite subfamily $\scr{B}$ of $\scr{C}$,  $|A\backslash int(cl(\cup\scr{B}))| \geq \kappa$.\\\\
\emph{Claim}: $\scr{F} = \{X\backslash cl(\cup\scr A): \scr A \in  [\scr{E} \cup \scr{C}]^{<\omega}\}$ is an $\kappa$-wide filter base such that $|cl(V)| \geq \kappa$ for each $V \in \scr{F}$.

\begin{proof}[Proof of Claim] Let $\scr A \in  [\scr{C}\cup \scr{E}]^{<\omega}$.  Then there are  finite subfamilies $\scr{B}\subseteq \scr{C}$ and  $\scr{D} \subseteq \scr{E}$ such that $\scr A = (\cup\scr{B}) \cup (\cup\scr{D})$.  It suffices to show that $|cl(X\backslash cl(\cup \scr A))| = |cl(X\backslash cl( (\cup\scr{B}) \cup (\cup\scr{D})))| \geq \kappa$.  Note that $|cl(\cup\scr{D})\cap A| < \kappa$ implies $|(X\backslash cl(\cup\scr{D}) )\cap A| = |A\backslash cl(\cup D)| = |A| \geq \kappa$. Also, note that $|cl(X\backslash cl(\cup\scr{B}))| \geq \kappa$ by the assumption.  We have:  
\begin{align}
cl(X\backslash cl( (\cup\scr{B} \cup (\cup\scr{D})))&= cl((X\backslash cl( \cup\scr{B})) \cap (X\backslash cl( \cup\scr{D})))\notag\\
&= cl(cl(X\backslash cl( \cup\scr{B})) \cap (X\backslash cl( \cup\scr{D})))\notag\\
&= cl((X\backslash int(cl( \cup\scr{B}))) \cap (X\backslash cl( \cup\scr{D})))\notag\\
&\supseteq cl((X\backslash int(cl( \cup\scr{B}))) \cap (A\backslash cl( \cup\scr{D}))) \notag\\
&= cl((A\backslash int(cl( \cup\scr{B}))) \backslash cl( \cup\scr{D}))). \notag
\end{align}
\noindent This shows that $|cl(X\backslash cl(\cup \scr A))| \geq |cl((A\backslash int(cl( \cup\scr{B}))) \backslash cl( \cup\scr{D})))| \geq \kappa$ and   $\scr{F}$ is a  $\kappa-$wide filter base.  
\end{proof}
As $X$ is $\kappa$wH-closed, there is some  $p \in a\scr{F}$.  If $p \in cl_{\theta }^{\ka}A $, there is $U \in \scr{C}$ such that $p \in U$.  Thus, $X\backslash cl(U) \in\scr{F}$ and $p \not \in cl(X\backslash cl(U))$; so, $p \not\in a(\scr{F})$.  On the other hand, if $p \not\in a\scr{F}$, there is $U \in\scr{E}$ such that $p \in U$.  Again, $X\backslash cl(U) \in \scr{F}$, $p \not \in cl(X\backslash cl(U))$, and $p \not\in a(\scr{F})$.  Hence, $a(\scr{F}) = \varnothing$.  This contradicts  the hypothesis.

To show the converse, let $\scr{F}$ be a free $\kappa-$wide open filter base on $X$.  Let $U \in\scr{F}$ such $|cl(U)|$ is minimum. We will apply the condition in the statement of the theorem to the set $cl(U)$. In particular, $|cl(U)| \geq \kappa$.  If $p \in cl_{\theta }^{\ka}(cl(U)) $, there is $V_p \in \scr{F}$ such that $p \not\in cl(V_p)$ and $V_p \subseteq U$.  Then $p \in X\backslash cl(V_p)$ and $|cl(X\backslash cl(V_p)) \cap cl(U)| \geq \kappa$. Let $\scr{C}=\{X\backslash cl(V_p): p \in cl_{\theta }^{\ka}(cl(U))\}$.  By the hypothesis of the converse, there is a finite subfamily $\scr{B}\subseteq\scr{C}$ such that $|cl(U)\backslash int(cl(\cup\scr{B}))| < \kappa$.  For $V = \cap\{V_p: X\backslash cl(V_p) \in\scr{B}\} \cap U$, note that $V \in \scr F$,  $cl(V) \subseteq cl(U)$ and $V \cap (\cup\scr{B}) = \varnothing$. It follows that $clV \cap int(cl(\cup\scr{B})) = \varnothing$.  This shows that $clV \subseteq clU \backslash int(cl(\cup\scr{B}))$. That is, $V \in \scr{F}$ and $|clV| < \kappa$, a contradiction. 
\end{proof}

As corollaries of Theorems~\ref{osipov} and \ref{kwHchar}, we have the following results.

\begin{corollary} Let $X$ be a space and $\kappa$ be an infinite cardinal. 
\begin{itemize}
\item[(a)] The space $X$ is H-closed iff $X$ is $\kappa$wH-closed for all infinite $\kappa \leq |X|$.
\item[(b)] If  $X$  is 
a $\kappa$wH-closed space, $A \subseteq X$ such that $|A| \geq \kappa|$, and $\scr{F}$ is a $\kappa-$wide open filter base on $X$ that meets  $cl_{\theta }^{\ka}A $, then  $a(\scr{F}) \cap cl_{\theta }^{\ka}A \ne \varnothing$.
\item[(c)] If $X$ is $\kappa$wH-closed and $A \subseteq X$ such that $|A| \geq \kappa$, then $cl^{\kappa}_{\theta}A \ne \varnothing.$
\end{itemize}
\end{corollary}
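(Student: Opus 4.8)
The plan is to handle the three parts in turn, using Theorem~\ref{kwHchar} together with Propositions~\ref{osipovk} and~\ref{propA}.

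For (a) I would observe that the two halves are already near-tautological given earlier results. In the forward direction, Theorem~\ref{kwHchar} reduces $\ka$wH-closedness to the statement that every open cover $\scr{C}$ of $cl_{\theta}^{\ka}A$ (for $A\sse X$ with $\ka\leq|A|$) has a finite $\scr{B}\sse\scr{C}$ with $|A\minus int(cl(\Un\scr{B}))|<\ka$; but since $\ka\leq|A|$ makes $A$ infinite, this is precisely the conclusion of Proposition~\ref{osipovk} for the H-closed space $X$. Hence H-closed implies $\ka$wH-closed for all infinite $\ka\leq|X|$. For the converse I would take $\ka=\aleph_0$: when $X$ is infinite this is an admissible value, so $X$ is $\aleph_0$wH-closed and Proposition~\ref{propA}(a) gives H-closedness, while the finite case is trivial because a finite Hausdorff space is compact.

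Part (b) is the real content, and I would argue by contradiction, assuming $a(\scr{F})\meet cl_{\theta}^{\ka}A=\es$. For each $p\in cl_{\theta}^{\ka}A$, the failure $p\notin a(\scr{F})$ supplies some $F_p\in\scr{F}$ with $p\notin cl(F_p)$, hence an open $U_p\ni p$ disjoint from $F_p$; the family $\{U_p\}$ then covers $cl_{\theta}^{\ka}A$. Applying Theorem~\ref{kwHchar} (legitimate since $|A|\geq\ka$) yields a finite $\scr{B}=\{U_{p_1},\dots,U_{p_n}\}$ with $|A\minus int(cl(\Un\scr{B}))|<\ka$. Using that $\scr{F}$ is a filter base I would choose $F\in\scr{F}$ with $F\sse\Meet_{i}F_{p_i}$; then the open set $F$ is disjoint from $\Un\scr{B}$, so $cl(F)\sse X\minus int(cl(\Un\scr{B}))$ and therefore $|cl(F)\meet A|<\ka$. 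Finally, since $\scr{F}$ meets $cl_{\theta}^{\ka}A$, I pick $q\in F\meet cl_{\theta}^{\ka}A$; as $F$ is an open neighborhood of the $\ka$-fold $\theta$-accumulation point $q$, we get $|cl(F)\meet A|\geq\ka$, the contradiction. I expect this part to be the main obstacle, and within it the one delicate point is the passage from $F\meet\Un\scr{B}=\es$ to $cl(F)\meet int(cl(\Un\scr{B}))=\es$, which rests on the elementary fact that an open set disjoint from an open set is already disjoint from its closure.

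For (c) I would note that $cl_{\theta}^{\ka}A=\es$ is impossible: the empty family would then be an open cover of $cl_{\theta}^{\ka}A$ whose only finite subfamily is itself, so Theorem~\ref{kwHchar} would force $|A|<\ka$, contradicting $|A|\geq\ka$. Alternatively, avoiding any convention about empty covers, I would build the $\ka$-wide open filter base $\scr{G}$ of all open $U$ with $A\minus U\in\setless{A}{\ka}$ (a filter base because $\ka$ is infinite and $|A|\geq\ka$, and $\ka$-wide because each member, containing all but fewer than $\ka$ points of $A$, has $\theta$-closure of size at least $\ka$); Proposition~\ref{propA}(b) then gives $a(\scr{G})\neq\es$, and any $p\in a(\scr{G})$ lies in $cl_{\theta}^{\ka}A$, since an open $V\ni p$ with $|cl(V)\meet A|<\ka$ would put $X\minus cl(V)$ in $\scr{G}$ while $p\in cl(X\minus cl(V))$ is impossible. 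Either way $cl_{\theta}^{\ka}A\neq\es$.
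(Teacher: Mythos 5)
Your proposal is correct and follows essentially the same route as the paper: part (a) is exactly the combination of Proposition~\ref{osipovk} with Theorem~\ref{kwHchar} (the paper leaves it as ``straightforward''), part (b) covers $cl_{\theta}^{\kappa}A$ by open sets missing filter elements and applies Theorem~\ref{kwHchar}, and part (c) is a degenerate covering (or, in your alternative, a filter-base) argument. If anything your part (b) is slightly cleaner than the paper's: the paper first selects $U\in\scr{F}$ minimizing $|cl(U)\cap A|$ and contradicts that minimality, whereas you dispense with this device and contradict the hypothesis that $\scr{F}$ meets $cl_{\theta}^{\kappa}A$ directly on the final refining element $F$; both versions turn on the same elementary point you correctly flag, that an open set disjoint from an open set is disjoint from its closure.
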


\begin{proof} 
The proof of (a) is straightforward. To prove (b), let $\scr{F}$ be a $\kappa-$wide open filter base on $X$ that meets $cl_{\theta }^{\ka}A$. Assume that $a(\scr{F}) \cap cl_{\theta }^{\ka}A = \varnothing$. 
Let $U \in \scr{F}$ such $|cl(U)\cap A|$ is minimum.  Note that  $|cl(U)\cap A| \geq \kappa$.
 If $p \in cl_{\theta }^{\ka}(A) $, there is $V_p \in \scr{F}$ such that $p \not\in cl(V_p)$ and $V_p \subseteq U$.  Then $p \in X\backslash cl(V_p)$ and $|cl(X\backslash cl(V_p)) \cap A| \geq \kappa$. Let $\scr{C}=\{X\backslash cl(V_p): p \in cl_{\theta }^{\ka}A\}$.  
 As $X$ is $\kappa$wH-closed, there is a finite subfamily $\scr{B}\subseteq\scr{C}$ such that $|A\backslash int(cl(\cup\scr{B}))| < \kappa$.  
 Now $V = \cap\{V_p: X\backslash cl(V_p) \in \scr{B}\} \cap U \in \scr{F}$.  Also, $clV \subseteq clU$ and $V \cap (\cup \scr{B}) = \varnothing$. It follows that $clV \cap int(cl(\cup\scr{B})) = \varnothing$.  
 This shows that $clV \subseteq clU \backslash int(cl(\cup\scr{B}))$.  Thus,  $clV \cap A \subseteq (clU \backslash int(cl(\cup\scr{B}))) \cap A = (clU \cap A)\backslash int(cl(\cup\scr{B}))  \subseteq  A\backslash int(cl(\cup\scr{B})).$ This implies there is a  $V \in\scr{F}$ such that $|clV\cap A| < \kappa$, a contradiction. 
 
To show (c), assume that $cl^{\kappa}_{\theta}A = \varnothing.$  For each $ p \in X$, there is an open set $p \in U_p \in \tau(X)$ such that $|clU_p \cap A| < \kappa$.  Then $\mathcal U = \{U_p:p \in X\}$ is an open cover of $X$ (and $cl^{\kappa}_{\theta}A).$  There is a finite $\mathcal V \subseteq \mathcal U$ such that $|A\backslash int(cl(\cup \mathcal V))| < \kappa$.  Let $B = A\backslash int(cl(\cup \mathcal V))$.  Then $A \subseteq   B \cup int(cl(\cup \mathcal V)) \subseteq B \cup cl(\cup \mathcal V)$.  It follows that $A  \subseteq B \cup (\cup_{V \in \mathcal V} cl(V) \cap A)$.  Thus, $|A| \leq |B| + \Sigma_{V \in \mathcal V}|cl(V) \cap A)| < \kappa$.  This is a contradiction. 
 \end{proof}

The study of $\kappa$wH-closed spaces  is a new approach to understanding the theory of  H-closed spaces by using  the width of a filter base.  The width  is a measure of the size of the closure of the elements of a filter base.  
 
There is still the question of obtaining a cardinality bound of $\kappa$wH-closed spaces.  We are able to obtain such a result only for $\kappa$wH-closed spaces with a dense subset of isolated points. We start by statinga well-known result that is similar to \ref{cbound}.

\begin{lemma}\label{clk} Let $\kappa$ be an infinite cardinal and $\chi(X) \leq \kappa$.  For $A \subseteq X$, $|cl(A)| \leq |A|^{\kappa}$.
\end{lemma}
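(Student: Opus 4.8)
The plan is to prove $|cl(A)| \leq |A|^{\kappa}$ by constructing an injection from $cl(A)$ into a set of cardinality $|A|^{\kappa}$, using the fact that $\chi(X) \leq \kappa$. Since every point has a neighborhood base of size at most $\kappa$, I can separate closure points of $A$ using small amounts of data drawn from $A$ itself.

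First I would fix, for each $x \in cl(A)$, an open neighborhood base $\scr{N}_x = \{N(\alpha,x) : \alpha < \kappa\}$ at $x$ of size at most $\kappa$, which exists since $\chi(X) \leq \kappa$. Because $x \in cl(A)$, each $N(\alpha,x)$ meets $A$; so I can choose a point $a(\alpha,x) \in N(\alpha,x) \cap A$ for every $\alpha < \kappa$. This associates to each $x \in cl(A)$ a function $\alpha \mapsto a(\alpha,x)$ from $\kappa$ into $A$, equivalently an element of $A^{\kappa}$, whose cardinality is $|A|^{\kappa}$.

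The key step is to verify that the assignment $x \mapsto (a(\alpha,x))_{\alpha < \kappa}$ is injective. Suppose $x \neq y$ with $x,y \in cl(A)$. Since $X$ is Hausdorff, I can find disjoint open sets separating $x$ and $y$; choosing basic neighborhoods $N(\alpha,x) \subseteq$ one of these and $N(\beta,y) \subseteq$ the other, the chosen points $a(\alpha,x)$ and $a(\beta,y)$ lie in disjoint sets. More carefully, the set of values $\{a(\alpha,x) : \alpha < \kappa\}$ accumulates only at $x$ (it converges to $x$ along the neighborhood base in the appropriate sense), so two distinct closure points cannot produce the same function. I would argue that if the two functions agreed, then the single sequence of chosen points would witness both $x$ and $y$ as limits, contradicting the Hausdorff separation. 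This gives the injection $cl(A) \hookrightarrow A^{\kappa}$ and hence $|cl(A)| \leq |A|^{\kappa}$.

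The main obstacle is making the injectivity argument airtight: it is not quite enough that each function lands in $A^{\kappa}$, since a priori different closure points might be assigned the same function if the choices collide. The clean fix is to encode slightly more information — rather than a single chosen point per basic neighborhood, record the full trace of the neighborhood base on $A$, i.e. associate to $x$ the element $(N(\alpha,x) \cap A)_{\alpha<\kappa} \in \big([A]^{\leq\kappa}\big)^{\kappa}$, a set still of size $|A|^{\kappa}$ when $|A| \geq 2$. Then distinct Hausdorff points $x,y$ have neighborhood bases whose traces on the dense-in-$cl(A)$ set $A$ must differ, since the intersection of all the $cl(N(\alpha,x))$ pins down $x$ uniquely. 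This is a standard closing-off style bookkeeping argument, and since the lemma is stated as well-known, I would keep the verification brief and cite the Hausdorff separation as the essential ingredient.
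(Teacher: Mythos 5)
The paper states this lemma without proof, as a well-known fact, so you are being compared against the standard argument. Your proposal has a genuine gap: neither of your two maps is simultaneously injective and valued in a set of size $|A|^{\kappa}$. The first map $x\mapsto (a(\alpha,x))_{\alpha<\kappa}$ does land in $A^{\kappa}$, but its injectivity is never established and the heuristic offered for it is false: the chosen points $a(\alpha,x)\in N(\alpha,x)\cap A$ do not form a net converging to $x$ (for a fixed neighborhood $U$ of $x$ there is no ``eventually'' over the unordered index set $\kappa$ forcing $a(\alpha,x)\in U$), so the set of chosen values need not accumulate only at $x$, and with an adversarial enumeration of the bases and an adversarial choice function two distinct points could receive identical functions. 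Your ``clean fix'' repairs injectivity but breaks the cardinality count: the trace $N(\alpha,x)\cap A$ is an arbitrary subset of $A$, not an element of $[A]^{\leq\kappa}$, so the codomain is really $(\mathcal{P}(A))^{\kappa}$, of size $2^{|A|\cdot\kappa}$. That can be strictly larger than $|A|^{\kappa}$ (for instance $|A|=2^{\kappa}$ gives $|A|^{\kappa}=2^{\kappa}$ while $2^{|A|\kappa}=2^{2^{\kappa}}$), so the assertion that this set ``is still of size $|A|^{\kappa}$'' is wrong, and the fix only proves the weaker bound $|cl(A)|\leq 2^{|A|\chi(X)}$.

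There are two standard repairs. (i) Index the chosen points by the directed set $[\kappa]^{<\omega}$: for $F\in[\kappa]^{<\omega}$ pick $a_F\in A\cap\bigcap_{\alpha\in F}N(\alpha,x)$, which is nonempty since $x\in cl(A)$. Then $(a_F)_{F}$ is a net in $A$ converging to $x$, uniqueness of net limits in a Hausdorff space makes $x\mapsto(a_F)_{F}$ injective, and the codomain $A^{[\kappa]^{<\omega}}$ has size $|A|^{\kappa}$. (ii) Mirror the proof of Proposition~\ref{cbound}, which the paper explicitly says this lemma resembles: since $t(X)\leq\chi(X)\leq\kappa$, first choose $A(x)\in[A]^{\leq\kappa}$ with $x\in cl(A(x))$, then record $\phi(x)=\{N(\alpha,x)\cap A(x):\alpha<\kappa\}\in\left[[A]^{\leq\kappa}\right]^{\leq\kappa}$, a set of size at most $|A|^{\kappa}$; injectivity holds because $\{x\}=\bigcap_{\alpha<\kappa}cl\bigl(N(\alpha,x)\cap A(x)\bigr)$, the intersection containing $x$ since each $N(\alpha,x)$ is an open neighborhood of $x\in cl(A(x))$, and being contained in $\bigcap_{\alpha<\kappa}cl(N(\alpha,x))=\{x\}$ by Hausdorffness. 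Your Hausdorff-separation instinct is the right ingredient; what is missing is the bookkeeping that keeps the recorded data simultaneously small enough for the count and rich enough to recover $x$.
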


\begin{theorem}\label{kwHclosedDense}
Let $\kappa$ be an infinite cardinal and  $X$ a $\kappa$wH-closed space with a dense set of isolated points and $\chi(X) \leq \kappa$.   
Then $|X| \leq 2^{\kappa}$.
\end{theorem}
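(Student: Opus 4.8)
The plan is to reduce the statement to a bound on the number of isolated points and then to run a closing-off argument in the spirit of Theorem~\ref{kappafilter}, feeding in the $\kappa$wH-closed hypothesis through Proposition~\ref{propA}(b) and Theorem~\ref{kwHchar}, and using the dense set of isolated points precisely to guarantee that the relevant filter bases are $\kappa$-wide.

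First I would record the standing consequences of $\chi(X)\le\kappa$. Since $X$ is Hausdorff, for each $x$ a neighborhood base $\{U(\alpha,x):\alpha<\kappa\}$ satisfies $\Meet_{\alpha<\kappa}cl\,U(\alpha,x)=\{x\}$ (so $\psi_c(X)\le\kappa$), and $t(X)\le\kappa$. Let $D$ denote the dense set of isolated points. As every isolated point belongs to every dense set, $d(X)=|D|$, and Lemma~\ref{clk} gives $|X|=|cl(D)|\le|D|^{\kappa}$. Hence it suffices to produce a set $H\sse X$ with $|H|\le 2^{\kappa}$ and $H=X$ (which in particular bounds $|D|$).

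Next I would build $H=\Un_{\xi<\kappa^+}H_\xi$ by transfinite recursion, mirroring Theorem~\ref{kappafilter}. Fixing a choice function $L:\scr{P}(X)\to X$, at each stage I would adjoin to $\Un_{\beta<\xi}H_\beta$, for every $A\in[\Un_{\beta<\xi}H_\beta]^{\le\kappa}$ and every $g:A\to\kappa$, the point $L\!\left(X\minus\Un_{x\in A}cl\,U(g(x),x)\right)$ whenever that remainder is nonempty, and then close off under the closure operator. Lemma~\ref{clk} keeps every stage of size $\le 2^{\kappa}$ (the closure of a set of size $\le 2^\kappa$ has size $\le(2^\kappa)^\kappa=2^\kappa$, and there are only $2^\kappa$ pairs $(A,g)$ to process), so $|H|\le 2^{\kappa}$; and since $\mathrm{cf}(\kappa^+)>\kappa$, every $A\in[H]^{\le\kappa}$ already lies in some $\Un_{\beta<\xi}H_\beta$. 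To finish I would show $H=X$: assuming $q\notin H$, for each $x\in H$ choose $\alpha_x$ with $q\notin cl\,U(\alpha_x,x)$, so $\scr{G}=\{U(\alpha_x,x):x\in H\}$ covers $H$ with $q$ in no closure of a member. For $A\in[H]^{\le\kappa}$ set $O_A=X\minus\Un_{x\in A}cl\,U(\alpha_x,x)$; these form a filter base. If some $O_A$ has $|cl_\theta O_A|<\kappa$ then $|O_A|<\kappa$, so the leftover $H\cap O_A$ has $<\kappa$ points, each coverable one-by-one by a further member of $\scr{G}$, giving an $A'\in[H]^{\le\kappa}$ with $H\sse\Un_{x\in A'}cl\,U(\alpha_x,x)$ and $q$ outside this union. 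Otherwise the filter base is $\kappa$-wide, and I would invoke $\kappa$wH-closedness (Proposition~\ref{propA}(b), via Theorem~\ref{kwHchar}) to extract a finite subfamily whose closures cover $H$ up to $<\kappa$ points, patching that leftover as before to obtain such an $A'$. In either case $A'$ lies in some $\Un_{\beta<\xi}H_\beta$, so the recursion already placed $L(X\minus\Un_{x\in A'}cl\,U(\alpha_x,x))$ into $H$; exactly as in Theorem~\ref{kappafilter}, this point would lie both in $H\sse\Un_{x\in A'}cl\,U(\alpha_x,x)$ and in its complement (or that union exhausts $X$ and so contains $q$), a contradiction. Thus $H=X$ and $|X|\le 2^{\kappa}$.

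The step I expect to be the main obstacle is the $\kappa$-wideness dichotomy, and this is exactly where the dense set of isolated points is indispensable: an uncovered open set $O_A$ contains its isolated points, so if $|O_A\meet D|\ge\kappa$ then $|cl_\theta O_A|\ge\kappa$ and $\kappa$wH-closedness applies, while if $|O_A\meet D|<\kappa$ then $O_A$ itself is small and the leftover can be absorbed directly. Without isolated points an uncovered open set could have $\theta$-closure of size $<\kappa$ (for instance a clopen singleton), and Definition~\ref{kwHclosed} would yield no adherent point. A secondary technical burden is the bookkeeping required to convert the ``finite subfamily, $<\kappa$ uncovered'' conclusion of Theorem~\ref{kwHchar}---which is naturally phrased in terms of $cl^{\kappa}_{\theta}$---into an honest $\le\kappa$-subfamily of $\scr{G}$ whose closures cover all of $H$ while still avoiding $q$, and to verify that the recursively chosen witnesses interact correctly with these covers.
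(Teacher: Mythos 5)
Your overall template (a recursion of length $\kappa^+$ diagonalizing against all $\le\kappa$-sized subfamilies, with a contradiction at a missing point) matches the paper's, but the step where you feed in $\kappa$wH-closedness does not go through, and it is exactly the step the paper's proof is engineered around. In your case (b) the filter base $\{O_A\}$ of uncovered remainders is $\kappa$-wide, so Definition~\ref{kwHclosed} (via Proposition~\ref{propA}(b)) hands you an adherent point $p\in\bigcap_A cl\,O_A$ --- and nothing more. One checks that $p\notin H$ (if $p\in H$ then $U(\alpha_p,p)$ is disjoint from $O_{\{p\}}=X\minus cl\,U(\alpha_p,p)$, so $p\notin cl\,O_{\{p\}}$), and an adherent point outside $H$ is not a contradiction: this is precisely the almost-Lindel\"of failure mode recalled in the introduction (Bella--Yaschenko). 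Nor can you instead ``extract a finite subfamily whose closures cover $H$ up to $<\kappa$ points'' from Theorem~\ref{kwHchar}: that theorem requires your family to cover $cl^{\kappa}_{\theta}(H)$, not just $H$. Since $H$ is only closed under $cl$ and the space need not be regular, $cl^{\kappa}_{\theta}(H)$ may contain points outside $H$ that $\scr{G}=\{U(\alpha_x,x):x\in H\}$ misses --- and worse, $q$ itself may lie in $cl^{\kappa}_{\theta}(H)$, in which case any admissible cover must contain a set whose closure contains $q$, destroying the separation you need. Your use of the isolated points (only to certify $\kappa$-wideness of the remainders) is too weak to repair this; note also that in your closing paragraph the hypothesis $|O_A\meet D|<\kappa$ yields only $|O_A|\le|O_A\meet D|^{\kappa}\le 2^{\kappa}$ via Lemma~\ref{clk}, not $|O_A|<\kappa$.

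The paper's proof avoids all of this by running the entire recursion inside $D$: the choice function selects isolated points, so the accumulated set $C$ is an open subset of $X$, whence $cl_{\theta}(C)=cl(C)=A$ and the cover $\{U_p:p\in A\}$ automatically covers $cl^{\kappa}_{\theta}(C)$ as Theorem~\ref{kwHchar} demands; the recursion diagonalizes against $\le\kappa$-sized subsets of $cl(\bigcup_{\alpha<\beta}A_{\alpha})$ (not merely of the accumulated points), so the subfamily returned by Theorem~\ref{kwHchar}, indexed by points of $A=cl(C)$, is one the recursion has already anticipated; the missing point $d$ is taken from $D$, which forces $d\notin cl^{\kappa}_{\theta}(C)$ (indeed $cl^{\kappa}_{\theta}(C)\sse X\minus D$) and makes $d\notin cl\,U_p$ trivial to arrange; and the $<\kappa$ leftover is absorbed because each isolated point is its own basic neighborhood. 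The conclusion $D\sse C$ then gives $|X|=|cl(D)|\le|C|^{\kappa}\le 2^{\kappa}$. To salvage your argument you would need to restructure it along these lines; as written, the central application of the $\kappa$wH-closed hypothesis is unjustified.
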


\begin{proof}
For each $p \in X$, let $V(p) = \{V(\al, p): \alpha < \kappa\}$ be an open neighborhood base at  $p$, and for $B \subseteq X$ and $f:B \rightarrow \kappa$, let $V(f,B) = \bigcup_{p \in B}V(f(p),p)$.  Let $D$ be the set of isolated points of $X$. If $p \in D$, we let $V(\al,p) = \{p\}$ for all $\al \in \ka$.

Let $H:\scr{P}(D) \rightarrow D$ be a choice function and $A_0 = {H(\varnothing)}$. We will inductively define $A_{\alpha}$ for $\alpha < \kappa^+$. For $\alpha < \kappa^{+}$, suppose $A_{\beta}$ is defined for $\beta < \alpha$. 
Let $$A_{\beta}  =   \bigcup_{\alpha < \beta}A_{\alpha}  \cup  \bigcup\{H(D\backslash V(g,B)): B \in [cl(\bigcup_{\alpha < \beta}A_{\alpha}) ]^{\leq\kappa}, g:B\rightarrow\kappa \}.$$ 

By induction, $|A_{\alpha}| \leq 2^{\kappa}$; let $C = \bigcup_{\alpha < \kappa^+}A_{\alpha}$.  It  follows that  $|C| \leq 2^{\kappa}$.  By Lemma~\ref{clk}, for $A = cl(C)$, $|A| \leq 2^{\kappa}$.  As $C \subseteq D$ and is open, we also have that $A = cl_{\theta}(C)$.  Also, note that as $C$ is an increasing chain over $\kappa^+$ and $\chi(X) \leq \kappa$,  $A = cl(C) =  \bigcup_{\alpha < \kappa^+}cl(A_{\alpha})$.
 
To apply Theorem~\ref{kwHchar},  we need to show that $|C| \geq \kappa$.  Suppose that 
 $|C| \leq \kappa$. As $C = \bigcup_{\alpha < \kappa^+}A_{\alpha}$, there is $\alpha < \kappa$ such that $C \subseteq A_{\alpha}$.  Let $g:C \rightarrow \kappa: p\mapsto 0$.  Then $C = V(g,C)$ and it follows that $D\backslash V(C,g) = \varnothing$.  Thus, $D \subseteq C$, $|D| \leq \kappa$, and it follows that $|X| \leq 2^{\kappa}$ and we are done.  We are reduced the case when $|C| \geq \kappa$.
   
To finish the proof of the theorem, we will prove that $X = cl(C)=A$ by showing that $D \subseteq C$.  Let $d \in D\backslash C = D\backslash A$. For each $p \in A$, there is some $U_p = V(\al_p,p) \in V(p)$ such that $d \not\in cl(U_p)$.  Then $\scr{C}= \{U_p: p \in A\}$ is an open cover of $A = cl_{\theta}C \supseteq  cl_{\theta}^{\kappa}C $; note that $cl_{\theta}^{\kappa}C \sse X\backslash D$.
 By Theorem~\ref{kwHchar}, there is a finite subfamily $\scr{B}$ of $\scr{C}$ such that $|C\backslash int(cl(\cup\scr{B}))| < \kappa$. There is a finite subset $F \subseteq A$ such that $\scr{B} = \{U_p: p \in F\}$.  For $U = \cup_{p \in F}U_p\cap D$, $C \backslash  int(cl(\cup\scr{B})) = C \backslash U$.  Define $h: C\backslash U \cup F \rightarrow \kappa: p\mapsto \al_p$; it follows that $C \subseteq V(h,C\backslash U \cup F)$.  
 
Now, as $|C\backslash U \cup F| < \kappa$ and $cl(C) =   \bigcup_{\alpha < \kappa^+}cl(A_{\alpha})$, there is some $\beta <\kappa^+$, such that $C\backslash U \cup F \subseteq cl(A_{\beta})$.  As $d \not\in V(h,C\backslash U \cup F)$, $D\backslash V(h,C\backslash U \cup F) \ne \varnothing$.  Thus, $H( D\backslash V(C\backslash U \cup F,h)) \in A_{\beta +1} \subseteq C$, a contradiction.  This completes the proof that $D \subseteq C$ and finishes the proof. 
\end{proof}
We ask whether the above Theorem~\ref{kwHclosedDense} is true without the hypothesis that $X$ has a dense set of isolated points.
\begin{question}
If $\kappa$ is an infinite cardinal and $X$ a $\kappa$wH-closed space such that $\chi(X)\leq\kappa$, is $|X|\leq 2^{\kappa}$?
\end{question}

\noindent\emph{Approach II.}\\
    
The application of $\kappa$-H-closed spaces in Theorem~\ref{kappafilter} to obtain a cardinality bound of H-closed spaces  provides another approach to studying H-closed spaces by using ``thin'' filter bases where a filter base is a member of $[[X]^{\leq \kappa}]^{\leq \kappa}$.   This technique is another way of measuring the width of a filter base and provides our second path in defining generalized H-closed.  

\begin{definition}\label{defII}  
Let $X$ be a space, $\kappa$ an infinite cardinal, and  $A \subseteq X$ such that $|A| \geq \kappa$. Define $c^{\ka}(A) = \{x \in X: \text{ if } x \in U \in \tau(X), \text{then } |\hat{U} \cap A| \geq \kappa \}$.  A space $X$ is $\kappa H^{\prime}$-\emph{closed} if $A \subseteq X$, $|A| \geq \kappa$, and $\mathcal U$ is an open cover of $c^{\ka}(A)$, there is a finite subfamily $\mathcal V \subseteq \mathcal U$ such that $|A \backslash \cup_{V \in \mathcal V}\hat{V}| < \kappa$.  If $|X| < \kappa$, it follows that $X$ is $\ka$H$^{\prime}$-closed.
\end{definition}

\begin{proposition}\label{HclII}  
A space $X$ is $\aleph_0$H$^{\prime}$-closed iff $X$ is H-closed.
\end{proposition}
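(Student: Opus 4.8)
The plan is to prove both implications by reducing, in each case, to the characterisation of H-closedness in Theorem~\ref{HclosedChar}: $X$ is H-closed exactly when every open cover $\scr{V}$ of $X$ admits a finite $\scr{W}\sse\scr{V}$ with $X=\Un_{W\in\scr{W}}\widehat{W}$. Throughout I would lean on Proposition~\ref{hatu}(a) (so $U\sse\widehat{U}\sse clU$) and Proposition~\ref{hatu}(c) (so $U\mapsto\widehat{U}$ commutes with finite unions, i.e.\ $\widehat{V_1\un\cdots\un V_n}=\widehat{V_1}\un\cdots\un\widehat{V_n}$). Since a finite Hausdorff space is discrete, hence compact and H-closed, and is $\aleph_0H^\prime$-closed by the last clause of Definition~\ref{defII}, the equivalence is trivial for finite $X$; I would therefore assume $X$ infinite in the substantive parts.

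For the forward implication I would assume $X$ is H-closed and fix an infinite $A\sse X$ together with an open cover $\scr{U}$ of $c^{\aleph_0}(A)$. For each $p\in X\minus c^{\aleph_0}(A)$ the definition of $c^{\aleph_0}(A)$ supplies an open $U_p\ni p$ with $\widehat{U_p}\meet A$ finite; set $\scr{O}=\{U_p:p\notin c^{\aleph_0}(A)\}$. Then $\scr{U}\un\scr{O}$ is an open cover of $X$, so by Theorem~\ref{HclosedChar} there is a finite $\scr{W}\sse\scr{U}\un\scr{O}$ with $X=\Un_{W\in\scr{W}}\widehat{W}$. Splitting $\scr{W}=\scr{V}\un\scr{D}$ with $\scr{V}\sse\scr{U}$ and $\scr{D}\sse\scr{O}$ finite gives $A\sse\Un_{V\in\scr{V}}\widehat{V}\un\Un_{U\in\scr{D}}\widehat{U}$, hence $A\minus\Un_{V\in\scr{V}}\widehat{V}\sse\Un_{U\in\scr{D}}(\widehat{U}\meet A)$ is a finite union of finite sets. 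Thus $|A\minus\Un_{V\in\scr{V}}\widehat{V}|<\aleph_0$, which is exactly the $\aleph_0H^\prime$-closed requirement.

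For the converse I would argue by contraposition: assuming $X$ is not H-closed (so $X$ is infinite), I would exhibit an infinite set and a cover violating Definition~\ref{defII}. By Theorem~\ref{HclosedChar} there is an open cover $\scr{V}$ of $X$ with no finite $\scr{W}\sse\scr{V}$ satisfying $\Un_{W\in\scr{W}}\widehat{W}=X$; replacing $\scr{V}$ by its closure under finite unions (which keeps it a cover and, since $U\mapsto\widehat U$ commutes with finite unions, keeps the failure property) I may assume $\widehat{V}\neq X$ for every $V\in\scr{V}$. The key step is to upgrade this to: $X\minus\widehat{V}$ is \emph{infinite} for every $V\in\scr{V}$. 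Indeed, were some $X\minus\widehat{V_0}=\{p_1,\dots,p_m\}$ finite and nonempty, I would pick $V_i\in\scr{V}$ with $p_i\in V_i$ and use $p_i\in V_i\sse\widehat{V_i}$ (Proposition~\ref{hatu}(a)) to get $\widehat{V_0\un V_1\un\cdots\un V_m}=\widehat{V_0}\un\Un_i\widehat{V_i}=X$, contradicting $\widehat{V}\neq X$ for all $V$. Now take $A=X$ (infinite) and the cover $\scr{U}=\scr{V}$ of $c^{\aleph_0}(X)\sse X$. For any finite $\scr{V}^\prime\sse\scr{V}$, putting $V^\ast=\Un_{V\in\scr{V}^\prime}V\in\scr{V}$, we have $\Un_{V\in\scr{V}^\prime}\widehat{V}=\widehat{V^\ast}$, so $A\minus\Un_{V\in\scr{V}^\prime}\widehat{V}=X\minus\widehat{V^\ast}$ is infinite. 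Hence no finite subfamily achieves $|A\minus\Un_{V\in\scr{V}^\prime}\widehat{V}|<\aleph_0$, and $X$ is not $\aleph_0H^\prime$-closed.

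I expect the upgrade step — passing from ``$\widehat{V}\neq X$'' to ``$X\minus\widehat{V}$ infinite'' — to be the one genuinely requiring thought, because the $\aleph_0H^\prime$-closed condition fails only when the leftover set is infinite, not merely nonempty; the absorption trick $p_i\in V_i\sse\widehat{V_i}$ is precisely what rules out a finite leftover. Everything else is the standard ``adjoin neighbourhoods of the exceptional points to the cover'' manoeuvre familiar from Osipov-type arguments, and the choice $A=X$ makes the role of $c^{\aleph_0}$ vacuous in the converse while keeping it essential in the forward direction.
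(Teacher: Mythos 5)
Your proof is correct and follows essentially the same route as the paper's: the direction ``H-closed implies $\aleph_0H^\prime$-closed'' is the paper's argument almost verbatim (adjoin the sets $U_p$ with $\widehat{U_p}\meet A$ finite, extract a finite family of hats covering $X$ via Theorem~\ref{HclosedChar}, and split), and your contrapositive treatment of the other direction, with $A=X$, is just the mirror image of the paper's direct one. Your ``upgrade step'' merely makes explicit the finite-leftover absorption (each leftover point $p$ lies in some $V$ of the cover and $V\sse\widehat{V}$) that the paper leaves implicit in its appeal to Theorem~\ref{HclosedChar}; there are no gaps.
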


\begin{proof}
Suppose $X$ is $\aleph_0$H$^{\prime}$-closed.  Let $\mathcal U$ be open cover of $X$.  We can assume that $|X| \geq \aleph_0$.  Then $\mathcal U$ covers $c^{\aleph_0}(X)$. Then there is a finite $\mathcal V \subseteq \mathcal U$ such that $|X\backslash \cup_{\mathcal V}\hat{V}| < \aleph_0$.  By Theorem~\ref{HclosedChar}, $X$ is H-closed.  Conversely, suppose $X$ is H-closed. Let $A \subseteq X$ such that $|A|\geq \aleph_0$.  Let $\mathcal U$ be open cover of $c^{\aleph_0}(A)$.  For each $p \not\in c^{\aleph_0}(A)$, there is $p \in U_p \in \tau(X)$ such that $|\hat{U_p} \cap A| < \aleph_0$.  Now, $\{U_p: p \not\in c^{\aleph_0}(A)\} \cup \mathcal U$ is open cover of $X$.  There is a finite $B \subseteq X\backslash c^{\aleph_0}(A)$ and a finite $\mathcal V \subseteq \mathcal U$ such that $X = \cup_{p \in B}\hat{U_p} \cup \cup_{V \in  \mathcal V}\hat{V}$.  Thus, $X \backslash \cup_{V \in  \mathcal V}\hat{V} \subseteq \cup_{p \in B}\hat{U_p}$ and $|A \backslash \cup_{V \in  \mathcal V}\hat{V} | \leq |\cup_{p \in B}(\hat{U_p}\cap A)| \leq \sum_{p\in B}|\hat{U_p}\cap A| < \aleph_0$.   This shows that $X$ is $\aleph_0$H$^{\prime}$-closed. 
\end{proof}

\begin{proposition}\label{cardII} 
Let $\kappa$ be infinite cardinal and $X$ be  $\kappa$H$^{\prime}$-closed. Then $aL'(X) < \kappa$.
\end{proposition}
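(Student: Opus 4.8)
The plan is to bound $aL(C,X)$ for an arbitrary $c$-closed set $C$, since by Definition~\ref{alprime} we have $aL^\prime(X)=aL^{b^\prime}(X)=\sup\{aL(C,X):C\textup{ is }c\textup{-closed}\}+\aleph_0$, and every set $\widehat{V}$, together with the operators $c$ and $c^{\ka}$ appearing in Definition~\ref{defII}, is computed with respect to the fixed witnessing section $X_{b^\prime}$. If $|C|<\ka$ there is nothing to do: given an open cover $\scr{V}$ of $C$, selecting one member of $\scr{V}$ containing each point of $C$ produces a subfamily of size at most $|C|<\ka$ whose members, hence whose closures, cover $C$. So the real work is the case $|C|\ge\ka$, and here I would invoke the $\ka H^\prime$-closed hypothesis.

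The device that connects that hypothesis to $c$-closed sets is the containment $c^{\ka}(C)\sse c(C)=C$. Comparing Definition~\ref{defII} with the definition of $c$, any $x$ with $|\widehat{U}\meet C|\ge\ka$ for all open $U\ni x$ certainly has $\widehat{U}\meet C\neq\es$ for all such $U$; thus $c^{\ka}(C)\sse c(C)$, and since $C$ is $c$-closed this equals $C$. Consequently, for an open cover $\scr{V}$ of $C$ the family $\scr{V}$ is in particular an open cover of $c^{\ka}(C)$, so the $\ka H^\prime$-closed property applied with $A=C$ supplies a finite $\scr{D}\sse\scr{V}$ with $|C\minus\Un_{V\in\scr{D}}\widehat{V}|<\ka$.

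Setting $R=C\minus\Un_{V\in\scr{D}}\widehat{V}$, we have $|R|<\ka$. For each $x\in R$ pick $V_x\in\scr{V}$ with $x\in V_x$; then $x\in\widehat{V_x}$ by Proposition~\ref{hatu}(a). The family $\scr{W}=\scr{D}\un\{V_x:x\in R\}$ has cardinality $<\ka$ (a finite set together with fewer than $\ka$ sets), and, using $\widehat{V}\sse clV$ from Proposition~\ref{hatu}(a),
$$C\sse\Un_{V\in\scr{D}}\widehat{V}\un\Un_{x\in R}\widehat{V_x}\sse\Un_{W\in\scr{W}}clW.$$
Thus every open cover of $C$ has a subfamily of cardinality $<\ka$ whose closures cover $C$, whence $aL(C,X)<\ka$; taking the supremum over all $c$-closed $C$ gives $aL^\prime(X)<\ka$.

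The step I expect to be the crux is recognizing and exploiting $c^{\ka}(C)\sse c(C)=C$: the $\ka H^\prime$-closed property only yields a finite approximating subfamily for covers of the set $c^{\ka}(A)$, so one must see that covering the (possibly far larger) $c$-closed set $C$ already covers $c^{\ka}(C)$ before the hypothesis becomes applicable. The only other point needing care is the passage from the single-step conclusion — finitely many $\widehat{V}$ leaving a residue of size $<\ka$ — to the cardinal statement $aL(C,X)<\ka$: one covers the small residue pointwise, uses $\widehat{V}\sse clV$ to turn the $\widehat{\,\cdot\,}$-cover into a closure-cover (which is exactly what the almost-Lindel\"of degree measures), and uses that $\ka$ is uncountable so that the resulting family, and hence the invariant $aL$ (which is always at least $\aleph_0$), lands strictly below $\ka$.
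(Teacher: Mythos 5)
Your argument is correct and reaches the same residue-covering endgame as the paper, but it enters the $\kappa H^{\prime}$-closed hypothesis from a different door. You apply Definition~\ref{defII} directly to the $c$-closed set $C$ itself, the enabling observation being $c^{\kappa}(C)\subseteq c(C)=C$ (so a cover of $C$ is already a cover of $c^{\kappa}(C)$). The paper instead applies the definition only to the single subset $A=X$: it first enlarges the given cover of the $c$-closed set to a cover of all of $X$ by adjoining, for each $p\notin C$, an open $U_p\ni p$ with $\widehat{U_p}\cap C=\varnothing$ (this is where the paper uses $c$-closedness), invokes $\kappa H^{\prime}$-closedness for $X$ to get a finite subfamily whose hats miss fewer than $\kappa$ points of $X$, and then discards the $\widehat{U_p}$'s since they are disjoint from $C$. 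Your route is the more economical of the two — no extension of the cover, no bookkeeping with the auxiliary sets — and it makes visible exactly which instance of the hypothesis is needed (the one for $A=C$); the paper's route has the mild virtue of only ever using the defining property for the subset $X$. Two shared caveats, which affect the paper's proof equally and so are not defects of your write-up: passing from ``every cover admits a subfamily of size $<\kappa$'' to ``$aL(C,X)<\kappa$,'' and then to ``$aL^{\prime}(X)<\kappa$'' via the supremum, is only automatic when $\kappa$ is a successor cardinal; and the strict inequality fails outright at $\kappa=\aleph_0$ since $aL^{\prime}(X)\geq\aleph_0$ by definition. You at least flag the need for $\kappa$ uncountable in the residue step, which the paper does not.
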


\begin{proof} Let $A$ be $c$-closed.  If $|A| < \kappa$, then $aL'(A,X) < \kappa.$  So, suppose that $|A| \geq \kappa$.  Let $\mathcal U$ be open cover of $A$. For each $p \not\in A$, there is $p \in U_p \in \tau(X)$ such that $\hat{U_p} \cap A = \varnothing$.  Now, $\{U_p: p \not\in A\} \cup \mathcal U$ is open cover of $X$.  There is a finite $B \subseteq X\backslash A$ and a finite $\mathcal V \subseteq \mathcal U$ such that $|X\backslash(\cup_{p \in B}\hat{U_p} \cup \cup_{V \in  \mathcal V}\hat{V})| < \kappa$.  Now, $|A\backslash(\cup_{p \in B}\hat{U_p} \cup \cup_{V \in  \mathcal V}\hat{V})| = |A\backslash( \cup_{V \in  \mathcal V}\hat{V})|  < \kappa$.  Thus, there is $\mathcal W \subseteq \mathcal U$ such that $|\mathcal W| < \kappa$ and $A\backslash(\cup_{V \in  \mathcal V}\hat{V}) \subseteq  \cup_{\mathcal W}W)   \subseteq  \cup_{\mathcal W}\hat{W}) $.  Therefore, $A \subseteq \cup_{\mathcal V}\hat{V} \cup\cup_{\mathcal W}\hat{W}$ and $|\mathcal V \cup \mathcal W| < \kappa$.  So, 
$aL'(A,X) < \kappa$. 
\end{proof} 

\begin{definition}  
For a space $X$, define $z(X) = inf\{\kappa \geq \aleph_0: X \text{ is } \kappa\text{H}'\text{--closed}\}$.
\end{definition}

By Proposition~\ref{cardII} and Theorem~\ref{newbound}, we have the following two results.

\begin{corollary}\label{corII} For a space $X$, 
\begin{itemize}
\item[(a)] $aL'(X)^+ \leq z(X)$ and 
\item[(b)] $|X| \leq 2^{z(X)t_c(X)\psi_c(X)}$.
\end{itemize}
\end{corollary}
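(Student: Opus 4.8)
The plan is to derive both parts directly from Proposition~\ref{cardII}, together with the observation that the infimum defining $z(X)$ is in fact attained. First I would record that the collection $S=\{\kappa\geq\aleph_0:X\text{ is }\kappa H^\prime\text{-closed}\}$ is nonempty: by the last sentence of Definition~\ref{defII}, every cardinal $\kappa>|X|$ lies in $S$. Since $S$ is a nonempty set of cardinals and the cardinals are well-ordered, the infimum $z(X)=\inf S$ is actually the \emph{minimum} of $S$, so $z(X)\in S$; that is, $X$ is itself $z(X)H^\prime$-closed. Note also that $z(X)\geq\aleph_0$, so $z(X)$ is an infinite cardinal and Proposition~\ref{cardII} is applicable to it.

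For part (a), I would then apply Proposition~\ref{cardII} with $\kappa=z(X)$ to conclude $aL^\prime(X)<z(X)$. For cardinals, the strict inequality $aL^\prime(X)<z(X)$ is equivalent to $aL^\prime(X)^+\leq z(X)$, which is precisely the assertion of (a).

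For part (b), from (a) we have in particular $aL^\prime(X)\leq z(X)$, whence $aL^\prime(X)t_c(X)\psi_c(X)\leq z(X)t_c(X)\psi_c(X)$. Applying Theorem~\ref{newbound} and the monotonicity of $\kappa\mapsto 2^\kappa$ then yields
$$|X|\leq 2^{aL^\prime(X)t_c(X)\psi_c(X)}\leq 2^{z(X)t_c(X)\psi_c(X)},$$
as required.

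The only point that genuinely requires care—and the step I would state most explicitly—is that the infimum in the definition of $z(X)$ is attained, so that Proposition~\ref{cardII} may legitimately be invoked at $\kappa=z(X)$ to obtain the \emph{strict} inequality $aL^\prime(X)<z(X)$ rather than merely $aL^\prime(X)\leq z(X)$. This strictness is exactly what converts into the successor bound $aL^\prime(X)^+\leq z(X)$ in (a). Everything else is routine cardinal arithmetic and a direct appeal to the main theorem.
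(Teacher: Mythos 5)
Your proposal is correct and follows the same route as the paper, which simply cites Proposition~\ref{cardII} and Theorem~\ref{newbound} without further elaboration. Your explicit observation that the infimum defining $z(X)$ is attained (so that Proposition~\ref{cardII} can be applied at $\kappa=z(X)$ to get the strict inequality, hence the successor bound) is exactly the detail the paper leaves implicit, and it is handled correctly.
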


\begin{remark}
This last corollary is an indication that the concept of $\kappa$H$^{\prime}$-closed is subsummed by the theory using $c$-closure and $aL'$.
\end{remark}

\noindent\emph{Approach III.}\\

In this third approach to generalized H-closed spaces, we modify the concept  defined in Definition~\ref{defII} in Path II.

\begin{definition}\label{kH''closed}
Let $X$ be a space, $\kappa$ an infinite cardinal, and  $A \subseteq X$ such that $|A| \geq \kappa$. Define $c^{\ka}(A) = \{x \in X: \text{ if } x \in U \in \tau(X), \text{then } |\hat{U} \cap A| \geq \kappa \}$.  A space $X$ is $\kappa H^{\prime\prime}$-\emph{closed} if $A \subseteq X$, $|A| \geq \kappa$, and $\mathcal U$ is an open cover of $c^{\ka}(A)$, there is a subfamily $\mathcal V \subseteq \mathcal U$ such that $|\mathcal V| \leq \kappa$ and $|A \backslash \cup_{V \in \mathcal V}\hat{V}| < \kappa$.  In particular, if $|X| < \kappa$, then $X$ is $\ka$H$^{\pp}$-closed.

\end{definition}
  
Using essentially the same proof as the proof of Proposition~\ref{cardII}, we obtain the following result.
  
\begin{proposition}\label{cardIII} 
Let $\kappa$ be infinite cardinal and $X$ be  $\kappa$H$^{\pp}$-closed.  Then $aL'(X) \leq \kappa$.
\end{proposition}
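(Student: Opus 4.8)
The plan is to reproduce the argument of Proposition~\ref{cardII} with the single modification that the $\kappa H^{\prime\prime}$-closed hypothesis produces a subfamily of size $\leq\kappa$ in place of a finite one; this is precisely what weakens the conclusion from $aL'(X)<\kappa$ to $aL'(X)\leq\kappa$. Since $aL'(X)=\sup\{aL(C,X):C\text{ is }c\text{-closed}\}+\aleph_0$ by Definition~\ref{alprime}, it suffices to fix an arbitrary $c$-closed set $A$ and an open cover $\mathcal U$ of $A$ and to exhibit a subfamily of $\mathcal U$ of cardinality $\leq\kappa$ whose closures cover $A$. If $|A|<\kappa$ this is immediate: choosing one member of $\mathcal U$ through each point of $A$ gives a subfamily of size $<\kappa$ whose members (hence closures) cover $A$.

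First I would treat the main case $|A|\geq\kappa$. Because $A$ is $c$-closed, for every $p\in X\setminus A=X\setminus c(A)$ the definition of $c$ provides an open set $U_p\ni p$ with $\widehat{U_p}\cap A=\varnothing$. Then $\mathcal U^\ast=\mathcal U\cup\{U_p:p\in X\setminus A\}$ is an open cover of $X$, and in particular of $c^\kappa(A)$, so the $\kappa H^{\prime\prime}$-closed property applies (here $|A|\geq\kappa$ is needed). It delivers a subfamily $\mathcal V^\ast\subseteq\mathcal U^\ast$ with $|\mathcal V^\ast|\leq\kappa$ and $|A\setminus\bigcup_{V\in\mathcal V^\ast}\widehat V|<\kappa$. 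Writing $\mathcal V=\mathcal V^\ast\cap\mathcal U$, the remaining members of $\mathcal V^\ast$ lie among the $U_p$, and since each $\widehat{U_p}$ is disjoint from $A$ they contribute nothing to $A$; hence $A\cap\bigcup_{V\in\mathcal V^\ast}\widehat V=A\cap\bigcup_{V\in\mathcal V}\widehat V$, giving $|A\setminus\bigcup_{V\in\mathcal V}\widehat V|<\kappa$ with $\mathcal V\subseteq\mathcal U$ and $|\mathcal V|\leq\kappa$.

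Next I would mop up the residue $R=A\setminus\bigcup_{V\in\mathcal V}\widehat V$, which has $|R|<\kappa$. As $\mathcal U$ covers $A\supseteq R$, for each $x\in R$ pick $W_x\in\mathcal U$ with $x\in W_x$; by Proposition~\ref{hatu}(a), $x\in W_x\subseteq\widehat{W_x}$, so $\mathcal W=\{W_x:x\in R\}$ satisfies $R\subseteq\bigcup_{W\in\mathcal W}\widehat W$ and $|\mathcal W|\leq|R|<\kappa$. Consequently $A\subseteq\bigcup_{V\in\mathcal V}\widehat V\cup\bigcup_{W\in\mathcal W}\widehat W\subseteq\bigcup\{cl V:V\in\mathcal V\cup\mathcal W\}$, using $\widehat V\subseteq cl V$ from Proposition~\ref{hatu}(a). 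Since $|\mathcal V\cup\mathcal W|\leq\kappa$ for infinite $\kappa$, this shows $aL(A,X)\leq\kappa$ (indeed $L'(A,X)\leq\kappa$). As $A$ was an arbitrary $c$-closed set, taking the supremum and adding $\aleph_0$ yields $aL'(X)\leq\kappa$.

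The argument is a near-verbatim transcription of Proposition~\ref{cardII}, so I anticipate no substantive obstacle. The two points warranting care are bookkeeping rather than mathematical: first, that $\mathcal U^\ast$ must be arranged to cover all of $X$ (not merely $A$), so that the auxiliary sets $U_p$ are available to neutralize the complement of $A$ when the $\kappa H^{\prime\prime}$-closed property is invoked; and second, that the size-$\leq\kappa$ subfamily forces the bound $aL'(X)\leq\kappa$, whereas the finiteness available in Proposition~\ref{cardII} yielded the strict inequality $aL'(X)<\kappa$.
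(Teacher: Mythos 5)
Your proof is correct and follows essentially the same route as the paper, which for Proposition~\ref{cardIII} simply says to repeat the argument of Proposition~\ref{cardII}; your writeup is a faithful explicit instantiation of that adaptation (neutralize $X\setminus A$ via sets $U_p$ with $\widehat{U_p}\cap A=\varnothing$, invoke the covering property, then absorb the residue of size $<\kappa$ pointwise). The only cosmetic difference is that you apply Definition~\ref{kH''closed} to the $c$-closed set $A$ itself rather than to $X$ as in the paper's Proposition~\ref{cardII}, which changes nothing of substance.
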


\begin{definition}   
For a space $X$, define $z^{\pp}(X) = inf\{\kappa \geq \aleph_0: X \text{ is } \kappa\text{H}''\text{--closed}\}$.
\end{definition}

By Proposition~\ref{cardIII} and Theorem~\ref{newbound}, we have the following two results.

\begin{corollary}\label{corIII} 
For a space $X$, 
\begin{itemize}
\item[(a)] $aL'(X) \leq z^{\pp}(X)$ and 
\item[(b)] $|X| \leq 2^{z^{\pp}(X)t_c(X)\psi_c(X)}$.
\end{itemize}
\end{corollary}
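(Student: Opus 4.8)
The plan is to deduce both parts directly from Proposition~\ref{cardIII} and Theorem~\ref{newbound}, exactly as Corollary~\ref{corII} is obtained from Proposition~\ref{cardII} and Theorem~\ref{newbound}. First I would record that $z^{\pp}(X)$ is well-defined: whenever $\kappa > |X|$ the space $X$ is $\kappa H^{\prime\prime}$-closed by the final sentence of Definition~\ref{kH''closed}, so the set $S = \{\kappa \geq \aleph_0 : X \text{ is } \kappa\text{H}''\text{--closed}\}$ is nonempty and its infimum exists.

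For part (a), fix any $\kappa \in S$. Proposition~\ref{cardIII} yields $aL^\prime(X) \leq \kappa$. Since this holds for every $\kappa \in S$, the cardinal $aL^\prime(X)$ is a lower bound for $S$; as every nonempty set of cardinals has a least element, the infimum is attained and $aL^\prime(X) \leq \min S = z^{\pp}(X)$.

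For part (b), I would combine part (a) with the monotonicity of the map $\kappa \mapsto 2^{\kappa}$. Theorem~\ref{newbound} gives $|X| \leq 2^{aL^\prime(X) t_c(X)\psi_c(X)}$, and since $aL^\prime(X) \leq z^{\pp}(X)$ by part (a) we have $aL^\prime(X) t_c(X)\psi_c(X) \leq z^{\pp}(X)t_c(X)\psi_c(X)$, whence $|X| \leq 2^{z^{\pp}(X)t_c(X)\psi_c(X)}$.

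There is no genuine obstacle here; all of the substance resides in Proposition~\ref{cardIII}. The one point deserving a moment's care is the infimum step in part (a), and it is immediate precisely because the bound from Proposition~\ref{cardIII} reads $aL^\prime(X) \leq \kappa$ with a non-strict inequality, in contrast to the strict $aL^\prime(X) < \kappa$ of Proposition~\ref{cardII}. Consequently no successor operation is forced, and we obtain the clean bound $aL^\prime(X) \leq z^{\pp}(X)$ rather than the $aL^\prime(X)^+ \leq z(X)$ recorded in Corollary~\ref{corII}(a).
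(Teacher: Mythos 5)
Your proposal is correct and follows exactly the paper's route: the corollary is stated as an immediate consequence of Proposition~\ref{cardIII} and Theorem~\ref{newbound}, which is precisely what you do. Your added remarks --- that the set of witnessing cardinals is nonempty so $z^{\pp}(X)$ is attained as a minimum, and that the non-strict bound in Proposition~\ref{cardIII} is what spares you the successor appearing in Corollary~\ref{corII}(a) --- are accurate and consistent with the paper's own remark following the corollary.
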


\begin{remark}
Using Approach III, \ref{corIII}(a) is sharper than \ref{corII}(a).  However, the price is that the counterpart to Proposition~\ref{HclII} is not true; that is, in the case when $\kappa = \aleph_0$, we do not necessarily get H-closed. In fact, any countable space is  $\aleph_0$H$^{\pp}$-closed.
\end{remark}

\end{document}